\newtheorem{theorem}{Theorem}[section]
\newtheorem{corollary}[theorem]{Corollary}
\newtheorem{lemma}[theorem]{Lemma}
\newtheorem{thm}{Theorem}
\newtheorem{proposition}[theorem]{Proposition}
\theoremstyle{definition}
\newtheorem{definition}[theorem]{Definition}
\newtheorem{remark}[theorem]{Remark}
\newtheorem{example}[theorem]{Example}
\newcommand{\bB}[1]{\ensuremath{\mathbb{#1}}}
\newcommand{\mC}[1]{{\ensuremath{\mathcal{#1}}}}
\newcommand{\eps}{\ensuremath{\varepsilon}}
\newcommand{\To}{\ensuremath{\Rightarrow}}
\newcommand{\bendL}[1]{\arrow[l,"{#1}" above,shift right, end
anchor=north east,bend right, start anchor=north west]}
\newcommand{\bendR}[1]{\arrow[l,"{#1}" below,shift left,end
anchor=south east,bend left, start anchor=south west]}
\DeclareMathOperator{\Hom}{Hom}
\DeclareMathOperator{\id}{id}
\DeclareMathOperator{\Ker}{Ker}
\DeclareMathOperator{\Coker}{Coker}
\DeclareMathOperator{\act}{act}
\DeclareMathOperator{\coact}{coact}
\DeclareMathOperator{\op}{op}
\DeclareMathOperator{\Ob}{Ob}
\DeclareMathOperator{\tens}{\otimes}
\DeclareMathOperator{\Ch}{\bf Ch}
\newcommand{\K}{\ensuremath{\textbf{K}}}
\newcommand{\cc}{\mathcal}
\DeclareMathOperator{\D}{\bf D}
\DeclareMathOperator{\Ab}{\bf Ab}
\DeclareMathOperator{\Ext}{Ext}
\DeclareMathOperator{\End}{End}
\let\Im\relax
\DeclareMathOperator{\Im}{Im}
\newenvironment{diagram*}{\begin{equation*}\begin{tikzcd}}{\end{tikzcd}\end{equation*}\break}
\begin{document}
\footskip30pt
\sloppy

\title{Recollements for derived categories of enriched functors and triangulated categories of motives}

\author{Grigory Garkusha}
\address{Department of Mathematics, Swansea University, Fabian Way, Swansea SA1 8EN, UK}
\curraddr{}
\email{g.garkusha@swansea.ac.uk}
\thanks{}

\author{Darren Jones}
\address{Department of Mathematics, Swansea University, Fabian Way, Swansea SA1 8EN, UK}
\curraddr{}
\email{darrenalexanderjones@gmail.com}
\thanks{The second author thanks the Swansea Science Doctoral Training Centre, 
and the Engineering and Physical Sciences Research Council for support.}

\subjclass[2010]{13D09, 14F42, 18D20}

\date{}

\keywords{Recollements of Abelian and triangulated categories, triangulated categories of motives}

\begin{abstract}
We investigate certain categorical aspects of Voevodsky's triangulated categories of motives. For this,
various recollements for Grothendieck categories of enriched functors and their derived categories 
are established. In order to extend these recollements further with respect to Serre's localization, the concept 
of the (strict) Voevodsky property for Serre localizing subcategories is introduced. This concept is inspired
by the celebrated Voevodsky theorem on homotopy invariant presheaves with transfers. As an application,
it is shown that Voevodsky's triangulated categories of motives fit into recollements of derived categories of 
associated Grothendieck categories of Nisnevich sheaves with specific transfers. 
\end{abstract}

\maketitle

\thispagestyle{empty} \pagestyle{plain}


\section{Introduction}

Triangulated categories of motives $\mathbf{DM}^{eff}_{\mC C}(k)$ over a (perfect) field $k$ constructed by Voevodsky~\cite{Voe2}
are of fundamental importance in motivic homotopy theory (here $\cc C$ is a reasonable category of correspondences
on smooth algebraic varieties $Sm/k$). By definition~\cite{Voe2}, it is the full triangulated subcategory of the derived category $\D\bigl(Shv(\mC C)\bigr)$
of complexes of Nisnevich $\cc C$-sheaves whose cohomology sheaves are homotopy invariant. By a theorem of Voevodsky~\cite{Voe2},
the inclusion $\mathbf{DM}^{eff}_{\mC C}(k)\to\D\bigl(Shv(\mC C)\bigr)$ has the left adjoint given by the Sulin complex functor $C_*$.
The category $\mathbf{DM}^{eff}_{\mC C}(k)$ is also equivalent to the quotient category of $\D\bigl(Shv(\mC C)\bigr)$
with respect to the localizing subcategory generated by complexes of the form $\cc C(-,X\times\mathbb A^1)_{nis}\to\cc C(-,X)_{nis}$
(see~\cite{SV1,Voe2} for details).

Inspired by Voevodsky's constructions~\cite{Voe2}, we investigate certain categorical aspects of $\mathbf{DM}^{eff}_{\mC C}(k)$.
For this purpose, we work in the framework of Grothendieck categories of enriched functors in the sense of~\cite{AlGG} 
and their derived categories\cite{GGME}. We shall convert some fundamental Voevodsky's theorems into the language of enriched
category theory and arrive at some categorical concepts and results which are of independent interest. Afterwards, we shall
translate these concepts and results back into the motivic language.

First, recall from~\cite{AlGG} that the category of enriched functors $[\mC C,\mC V]$, where \mC V is a closed symmetric monoidal Grothendieck
category and \mC C is a small \mC V-category, is a Grothendieck $\mC
V$-category with a set of generators $\{\mC V(c,-) \oslash g_i\mid c\in\Ob\mC C,i\in I\}$, where $\{g_i\}_I$ is a set of generators
of $\mC V$\!. As in~\cite{AlGG}, we shall refer to the category $[\mC C,\mC V]$ as the \emph{Grothendieck category
of enriched functors}. Our first result is as follows (see Theorem~\ref{Egg}).

\begin{thm}\label{introEgg}
Suppose $\mC A$ is a full \mC V-subcategory of a small \mC V-category \mC C.
Define a localizing subcategory 
$\mC S_\mC A:=\{Y\in[\mC C,\mC V]\mid Y(a)=0\textrm{ for all $a\in\mC A$}\}\subset [\mC C,\mC V].$ There is a recollement \begin{diagram*}[column
sep=huge] \mC S_\mC A\arrow[r,"i"] &{[\mC C, \mC V]}
   \bendR{i_R}
   \bendL{i_L}
   \arrow[r,"r"]			
& {[\mC A, \mC V]}\bendR{r_R} 
   \bendL{r_L} 
\end{diagram*}
with functors $i,r$ being the canonical inclusion and restriction functors
respectively. The functors $r_L,r_R$ are the enriched left and right Kan extensions respectively,
$i_R$ is the torsion functor associated with the localizing subcategory $\mC S_\mC A$.
\end{thm}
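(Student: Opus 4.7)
The plan is to verify the four standard axioms of a recollement of abelian Grothendieck categories: (i) existence of the two adjoint triples $i_L \dashv i \dashv i_R$ and $r_L \dashv r \dashv r_R$, (ii) full faithfulness of $i$, $r_L$ and $r_R$, (iii) the vanishing $r\circ i = 0$, and (iv) the kernel identification $\ker r = \mC S_\mC A$. Once these are in hand, the characteristic short exact sequences relating the two triples follow automatically.

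First I would check that $\mC S_\mC A$ is a localizing Serre subcategory of $[\mC C,\mC V]$. Since limits and colimits in $[\mC C,\mC V]$ are computed pointwise in the Grothendieck category $\mC V$, the vanishing condition defining $\mC S_\mC A$ is preserved by subobjects, quotients, extensions, arbitrary limits and arbitrary colimits. Consequently $i$ is exact and preserves all (co)limits; as $[\mC C,\mC V]$ is locally presentable, the adjoint functor theorem yields both adjoints $i_L$ and $i_R$, with $i_R$ being precisely the torsion functor of the localization.

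Next, for the triple around $r$, I would invoke enriched Kan extension theory. Over a cocomplete and complete closed symmetric monoidal Grothendieck category $\mC V$, restriction along any $\mC V$-functor $\mC A\to\mC C$ admits both a left adjoint $r_L$, the enriched left Kan extension, and a right adjoint $r_R$, the enriched right Kan extension. Since the inclusion $\mC A\hookrightarrow\mC C$ is fully faithful as a $\mC V$-functor, the standard fact that Kan extensions along a fully faithful functor are themselves fully faithful---verified via the enriched co-Yoneda lemma, which gives that the counit $r r_L\to\id$ and unit $\id\to r r_R$ are natural isomorphisms---shows that $r_L$ and $r_R$ are fully faithful.

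The remaining axioms are direct. The equality $r\circ i = 0$ is immediate from the definition of $\mC S_\mC A$, and conversely any $X\in[\mC C,\mC V]$ with $r(X)=0$ has $X(a)=0$ for every $a\in\mC A$, giving $\ker r = \mC S_\mC A$. From these ingredients the defining short exact sequences of the recollement follow by a routine diagram chase: applying the exact functor $r$ to the counit $r_L r(X)\to X$ and to the unit $X\to r_R r(X)$ shows their cokernel and kernel, respectively, lie in $\mC S_\mC A$, and the universal properties of the adjunctions identify these with $i\,i_L(X)$ and $i\,i_R(X)$. The main technical point I expect to require care is the enriched full-faithfulness of $r_L$ and $r_R$: while the analogous statement for ordinary functor categories is textbook, here one must carefully verify the enriched Kan extension formulas and the enriched co-Yoneda computation over a general closed symmetric monoidal Grothendieck $\mC V$, using only the hypothesis that $\mC A\hookrightarrow\mC C$ is a full $\mC V$-subcategory inclusion. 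Once this is in place, the remaining verifications reduce to standard abelian-category bookkeeping.
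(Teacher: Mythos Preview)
Your proposal is correct and follows essentially the same architecture as the paper's proof: verify $\Im i=\Ker r$, construct $r_L\dashv r\dashv r_R$ as enriched Kan extensions, deduce full faithfulness of $r_L,r_R$ from full faithfulness of the inclusion $\mC A\hookrightarrow\mC C$, and then obtain the triple $i_L\dashv i\dashv i_R$. The paper carries out the Kan-extension adjunctions and the full-faithfulness check by explicit end/coend manipulations rather than by citing the general theory, but this is a difference of presentation, not of strategy.

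The one genuine point of divergence is how $i_L$ is produced. You appeal to local presentability and the adjoint functor theorem, which is perfectly valid here since $\mC S_\mC A$ is closed under all limits and colimits in $[\mC C,\mC V]$. The paper instead constructs $i_L$ explicitly from the already-established triple on the right: it sets $i_L(X)=\Coker(\varepsilon_X)$ with $\varepsilon:r_Lr\Rightarrow\id$ the counit, checks via the triangle identity that this cokernel lands in $\mC S_\mC A$, and verifies the adjunction directly (dually $i_R(X)=\Ker(\eta_X)$ for the unit $\eta:\id\Rightarrow r_Rr$). Your route is quicker and more conceptual; the paper's route yields an explicit formula for $i_L$ in terms of the Kan extensions, which is what you sketch anyway in your final paragraph when identifying the cokernel of $r_Lr(X)\to X$ with $i\,i_L(X)$.
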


This previous theorem can be used to recover an important example
of recollements~\cite[Chapter~IV]{Stein} of modules induced by idempotents (see Example~\ref{EGG}). Using this
theorem, we can extend~\cite[Theorem 5.3]{AlGG} into a recollement (see~Theorem~\ref{Egg2}).

\begin{thm}\label{introEgg2}
There is a recollement
\begin{diagram*}[column
sep=huge] \mC S_\mC A\arrow[r,"i"] &{[\mC C, \mC V]}
   \bendR{i_R}
   \bendL{i_L}
   \arrow[r,"\ell"]			
& {[\mC C,\mC V]/\mC S_\mC A}\bendR{\ell_R} 
   \bendL{\ell_L} 
\end{diagram*}
with functors $i_L,i,i_R$ being from the preceding theorem. The functor $\ell$ is the localization functor, 
$\ell_R$ is the inclusion and $\ell_L:=r_L\circ {(\ell\circ r_L)}^{-1}$.
\end{thm}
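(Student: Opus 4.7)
The plan is to lift the recollement of Theorem~\ref{introEgg} across a canonical equivalence $[\mC A,\mC V]\simeq[\mC C,\mC V]/\mC S_\mC A$. Since $\mC S_\mC A$ is localizing in the Grothendieck category $[\mC C,\mC V]$, the quotient functor $\ell$ automatically possesses a fully faithful right adjoint $\ell_R$---the usual section functor of Gabriel localization---so this serves as $\ell_R$ in the diagram. The $(i_L,i,i_R)$-part of the new recollement is inherited verbatim from Theorem~\ref{introEgg}, and the identity $\ell\circ i=0$ is tautological from $\ker\ell=\mC S_\mC A$.

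The core step is the equivalence. Since $r(Y)=0$ for every $Y\in\mC S_\mC A$, the universal property of the Serre quotient produces an exact functor $\bar r\colon[\mC C,\mC V]/\mC S_\mC A\to[\mC A,\mC V]$ with $r=\bar r\circ\ell$. The enriched left Kan extension $r_L$ is fully faithful because $\mC A\hookrightarrow\mC C$ is, so the unit $F\to r\,r_L F$ is an isomorphism, giving $\bar r\circ\ell r_L\cong r\circ r_L\cong\id_{[\mC A,\mC V]}$. In the other direction, the triangle identity applied to the counit $\varepsilon_Y\colon r_L r Y\to Y$ forces $r(\varepsilon_Y)$ to be the inverse of the unit at $rY$, hence an isomorphism. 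By exactness of $r$ we conclude $\ker\varepsilon_Y,\coker\varepsilon_Y\in\mC S_\mC A$, so $\ell(\varepsilon_Y)$ is an isomorphism and $\ell r_L\circ\bar r\cong\id$. Thus $\ell\circ r_L$ is an equivalence with quasi-inverse $\bar r$.

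Defining $\ell_L:=r_L\circ(\ell r_L)^{-1}$ yields a fully faithful functor (composite of fully faithful ones), and the adjunction $\ell_L\dashv\ell$ reduces to the chain of natural isomorphisms
\[
\Hom(\ell_L F,Y)\cong\Hom((\ell r_L)^{-1}F,rY)=\Hom((\ell r_L)^{-1}F,\bar r\ell Y)\cong\Hom(F,\ell Y),
\]
where the first uses $r_L\dashv r$ and the last uses the equivalence $\ell r_L$. The remaining recollement identities are obtained from those in Theorem~\ref{introEgg} by transport along $\bar r$. The only substantive obstacle is proving the equivalence in the middle paragraph; everything else is formal bookkeeping.
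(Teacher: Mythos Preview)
Your proposal is correct and follows essentially the same route as the paper: both establish $\ell_L\dashv\ell$ by composing the adjunction $r_L\dashv r$ with the equivalence $\varkappa=\ell\circ r_L$, and both rely on the fact that the counit $\varepsilon_Y\colon r_Lr(Y)\to Y$ becomes an isomorphism after applying $\ell$. The only cosmetic differences are that you prove the equivalence $\varkappa$ directly (the paper imports it from~\cite[5.3]{AlGG}) and obtain full faithfulness of $\ell_L$ as a composite of fully faithful functors, whereas the paper deduces it from full faithfulness of $\ell_R$ via Lemma~\ref{mm}.
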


Next, we shall pass to derived categories. In what follows in the introduction, we shall always assume that
the derived category $\D(\cc V)$ is compactly generated with some reasonable assumptions on
compact generators (more precisely, we assume Theorem~\ref{cond} to satisfy). 
We prove that there is a recollement as follows (see Theorem~\ref{TA}).
 
\begin{thm}\label{intro3}
There exists
a recollement of triangulated categories 
\begin{diagram*}[column sep=huge]
\phantom{X}\mC E_\mC A\arrow[r,"\iota"] &{\D[\mC C,\mC V]}
   \arrow[l, shift left, bend left, "\iota_R"] 
   \arrow[l, shift right, bend right,swap ,"\iota_L"]
   \arrow[r,"\rho"]			
& {\D[\mC A,\mC V]}\arrow[l, shift left, bend left, "\rho_R"] 
   \arrow[l, shift right, bend right,swap,"\rho_L"] 
\end{diagram*}
where $\mC A \subset \mC C$, $\mC E_\mC A := \bigl\{Y\in \mC D[\mC C, \mC V] \mid H_n\bigl(Y(a)\bigr) = 0
\text{ for all } a\in \mC A\text{ and }n \in \bB Z\bigr\}$, $\iota$ is the inclusion, $\rho$ is the
restriction.
\end{thm}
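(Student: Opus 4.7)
The plan is to lift the abelian recollement of Theorem~\ref{introEgg} to the derived level by deriving each of the six functors. The central observation is that the restriction $r\colon[\mC C,\mC V]\to[\mC A,\mC V]$ is exact (limits and colimits of enriched functors are computed objectwise), so $\rho$ is simply termwise restriction and needs no further derivation. By contrast, the Kan extensions $r_L$ and $r_R$ are only one-sided exact, so their total derived functors $\rho_L:=\mathbb{L}r_L$ and $\rho_R:=\mathbb{R}r_R$ must be constructed with care; the compact generation hypothesis encoded in Theorem~\ref{cond} is what makes this feasible.

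First I would establish the right-hand pair of adjoints. Since $\rho$ preserves coproducts and $\D[\mC C,\mC V]$ is compactly generated by the derived images of the generators $\mC V(c,-)\oslash g_i$, Brown representability supplies a right adjoint $\rho_R$. For the left adjoint I would build $\rho_L$ directly, prescribing its value on the compact generators $\mC V(a,-)\oslash g_i$ ($a\in\mC A$) of $\D[\mC A,\mC V]$ to be the same generators viewed in $\D[\mC C,\mC V]$---this matches what $r_L$ does at the abelian level on representables, using that $\mC A\subset\mC C$ is a full $\mC V$-subcategory. Full faithfulness, i.e.\ $\rho\rho_L\simeq\id\simeq\rho\rho_R$, can then be verified on compact generators, where it reduces to the corresponding abelian statements from Theorem~\ref{introEgg}.

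Next I would address the left-hand portion of the recollement. The identification $\mC E_\mC A=\ker\rho$ is tautological from the definition of $\mC E_\mC A$ and the exactness of $r$, and in particular shows that $\mC E_\mC A$ is a localizing triangulated subcategory. With $\rho_L$ and $\rho_R$ in hand, the inclusion $\iota\colon\mC E_\mC A\hookrightarrow\D[\mC C,\mC V]$ then admits a left adjoint $\iota_L$ and a right adjoint $\iota_R$, obtained from the standard Bousfield triangles $\rho_L\rho X\to X\to\iota\iota_L X$ and $\iota\iota_R X\to X\to\rho_R\rho X$; the fact that the outer terms lie in $\mC E_\mC A$ is a consequence of $\rho\rho_L\simeq\id\simeq\rho\rho_R$.

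The main obstacle I anticipate is the construction of $\rho_L=\mathbb{L}r_L$ as a well-defined, coproduct-preserving triangulated functor. Showing that the prescription on compact generators extends canonically to the whole compactly generated category is where the assumptions of Theorem~\ref{cond} do their real work: one needs that $\{\mC V(c,-)\oslash g_i\}$ descends to a set of compact generators of $\D[\mC C,\mC V]$ and that this set is stable under restriction. Once this technical step is dispatched, the remaining verifications---adjunction units, full faithfulness of $\rho_L$ and $\rho_R$, and the kernel equation---are routine.
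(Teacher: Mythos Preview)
Your overall strategy matches the paper's: $\rho$ is termwise restriction since $r$ is exact, $\mC E_\mC A=\ker\rho$ is immediate, the left-hand adjoints $\iota_L,\iota_R$ come from Bousfield triangles once $\rho_L,\rho_R$ are in place, and compact generation drives everything. The paper also builds $\rho_R$ essentially as $\mathbb{R}r_R$ --- apply $r_R$ degreewise to a \K-injective resolution, after checking (Lemma~\ref{rest}) that $r_R$ preserves \K-injectives --- which is just a more explicit form of your Brown-representability step.

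Where the paper is sharper is the construction of $\rho_L$. You phrase it as ``prescribe values on compact generators and extend'' and rightly flag this as the main obstacle, but extending a prescription on a set of objects to a triangulated functor is not a well-posed operation in bare triangulated categories. The paper sidesteps this entirely: it lets $\mC T_\mC A\subset\D[\mC C,\mC V]$ be the localizing subcategory generated by $\{\mC V_\mC C(a,-)\oslash Q_j\mid a\in\mC A\}$, obtains a ready-made recollement $\mC T_\mC A^\perp\leftrightarrows\D[\mC C,\mC V]\leftrightarrows\mC T_\mC A$ from the general theory (Lemma~\ref{comp}), identifies $\mC T_\mC A^\perp=\mC E_\mC A$ (Lemma~\ref{perp}), and then proves that the composite $F:=\rho\circ\tau_L\colon\mC T_\mC A\to\D[\mC A,\mC V]$ is an equivalence by a direct Hom-set check on generators (Lemmas~\ref{equ}, \ref{L6}, \ref{F}). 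Then $\rho_L:=\tau_L\circ F^{-1}$. This buys two things: no functor has to be ``extended'' from anywhere, and $r_L$ never needs to be left-derived --- under hypothesis~(2) of Theorem~\ref{cond} there is no obvious supply of \K-projective resolutions in $\Ch[\mC A,\mC V]$, so writing $\rho_L=\mathbb{L}r_L$ directly would require justification you have not supplied.
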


Subsequently, we consider a Serre localizing subcategory $\mC Q\subset [\mC C,\mC V].$ It is not 
readily apparent when recollements as above are compatible with \mC Q-localization. 
Precisely, after constructing a recollement of $\D[\mC C,\mC V]$ in the preceding theorem, it is important for applications
to extend it further
to a recollement of $\D\bigl([\mC C,\mC V]/\mC Q\bigr)$.
Therefore to make the desired extension of the recollement to $\D\bigl([\mC C,\mC V]/\mC Q\bigr)$ possible,
we need to find the right conditions on the localizing subcategories $\mC S_{\mC A}$ and $\mC Q$
of $[\mC C,\mC V]$. These conditions originate in the fundamental Voevodsky theorem~\cite{Voe1}, which says that
the Nisnevich sheaf $F_{nis}$ associated with a homotopy invariant presheaf with transfers $F$ is 
a homotopy invariant sheaf with transfers
and that it is strictly homotopy invariant whenever the base field is perfect. We translate this theorem into the language
of Serre and Bousfield localization theory in Grothendieck categories and their derived categories. Namely, we formulate
the desired conditions in Definitions~\ref{vprop} and~\ref{vsprop} that $\mC S_\mC A$ and $\cc Q$ must satisfy,
that allow us to extend Theorem~\ref{intro3}. We shall call these conditions the ``Voevodsky properties" owing
to the influence of Voevodsky's fundamental theorem~\cite{Voe1} on homotopy invariant 
presheaves with transfers. We construct the following recollement (see Theorem~\ref{bigthm}). 

\begin{thm}\label{intro4}
Suppose $\mC A\subset\mC C$, and $\mC Q\subset [\mC C,\mC V]$ 
is a localizing subcategory such that: 
\begin{itemize}
  \item $\D \big([\mC C, \mC V]/\mC Q \big)$ is
  compactly generated and the functor induced by the exact $\mC Q$-localization functor respects compact objects;
  \item the localizing subcategory $\mC S_\mC A$ 
  satisfies the strict Voevodsky property with respect to $\mC Q$ (see  Definition~\ref{vsprop}).
\end{itemize}
Then there exists a recollement of triangulated categories \begin{diagram*}[column sep=huge]
\mC E_\mC A^\mC Q\arrow[r,"\iota^\mC Q"] &{\D\bigl([\mC C,\mC V]/\mC Q\bigr)}
   \bendR{\iota_R^\mC Q} 
   \bendL{\iota_L^\mC Q}
   \arrow[r,"\lambda^\mC Q"]			
& {\D\bigl([\mC C,\mC V]/\mC J_\mC A\bigr),}
   \bendR{\lambda_R^\mC Q} 
   \bendL{\lambda_L^\mC Q}
\end{diagram*}
where $\mC E_\mC A^{\mC Q}$ is the full subcategory containing chain complexes with homology belonging to the
\mC Q-localization of $\mC S_\mC A,$ and $\mC J_\mC A$ is the smallest localizing category containing $\mC Q$ and 
$\mC S_\mC A$ in $[\mC C,\mC V]$.
\end{thm}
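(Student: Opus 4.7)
The plan is to derive the desired recollement by combining Theorem~\ref{intro3} with the $\mC Q$-localization, using the strict Voevodsky property as the compatibility bridge. In outline, I would realize $\D\bigl([\mC C,\mC V]/\mC J_\mC A\bigr)$ as a Verdier quotient of $\D\bigl([\mC C,\mC V]/\mC Q\bigr)$ by $\mC E_\mC A^\mC Q$, and then invoke Bousfield/Neeman localization for compactly generated triangulated categories to produce the three pairs of adjoints.

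First I would verify that $\mC E_\mC A^\mC Q$ is a localizing subcategory of $\D\bigl([\mC C,\mC V]/\mC Q\bigr)$: closure under shifts, triangles, and coproducts is immediate from the fact that homology commutes with coproducts and that the $\mC Q$-localization of $\mC S_\mC A$ is closed under extensions in $[\mC C,\mC V]/\mC Q$. Next, using the assumption that $\D\bigl([\mC C,\mC V]/\mC Q\bigr)$ is compactly generated and that the derived $\mC Q$-localization functor preserves compactness, together with the recollement of Theorem~\ref{intro3}, I would identify a set of compact generators of $\mC E_\mC A^\mC Q$, namely the images under the derived $\mC Q$-localization of a suitable set of compact generators of $\mC E_\mC A$ (those supported on $\mC S_\mC A$). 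At this point Neeman's localization theorem immediately delivers a recollement
\[\mC E_\mC A^\mC Q\longrightarrow \D\bigl([\mC C,\mC V]/\mC Q\bigr)\longrightarrow \D\bigl([\mC C,\mC V]/\mC Q\bigr)\big/\mC E_\mC A^\mC Q,\]
providing the six functors $\iota^\mC Q,\iota_L^\mC Q,\iota_R^\mC Q,\lambda^\mC Q,\lambda_L^\mC Q,\lambda_R^\mC Q$ by Brown representability.

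The crucial, and I expect most delicate, step is to identify this Verdier quotient with $\D\bigl([\mC C,\mC V]/\mC J_\mC A\bigr)$. On the abelian level there is a general Serre-localization identification $[\mC C,\mC V]/\mC J_\mC A \simeq \bigl([\mC C,\mC V]/\mC Q\bigr)/\overline{\mC S}_\mC A$, where $\overline{\mC S}_\mC A$ is the $\mC Q$-localization of $\mC S_\mC A$. The obstacle is that passing from Serre quotients of abelian categories to their derived categories is not automatic: one needs the right adjoint of the derived localization to preserve $\overline{\mC S}_\mC A$-local resolutions, so that the kernel of the derived functor $\D\bigl([\mC C,\mC V]/\mC Q\bigr)\to\D\bigl([\mC C,\mC V]/\mC J_\mC A\bigr)$ is exactly $\mC E_\mC A^\mC Q$ rather than something larger. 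This is precisely what the strict Voevodsky property of Definition~\ref{vsprop} was designed to encode, by mimicking Voevodsky's theorem that the Nisnevich sheafification of a homotopy-invariant presheaf with transfers remains strictly homotopy invariant; it guarantees that the acyclicity of the $\overline{\mC S}_\mC A$-local replacement is inherited term-by-term at the level of cohomology.

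With that identification in hand, the axioms of a recollement (full faithfulness of $\iota^\mC Q$, $\lambda_L^\mC Q$, $\lambda_R^\mC Q$ and the exactness of the triangle of adjoints) are formal consequences of the Bousfield localization set-up, while the explicit description of $\iota^\mC Q,\lambda^\mC Q$ as the inclusion and quotient functors is tautological from the construction. The main obstacle, therefore, is the passage from abelian to derived Serre quotients; once the strict Voevodsky property has been unpacked to show that the kernel of the derived projection to $\D\bigl([\mC C,\mC V]/\mC J_\mC A\bigr)$ coincides with $\mC E_\mC A^\mC Q$, the rest of the argument is an application of standard compactly-generated triangulated-category machinery mirroring the proof of Theorem~\ref{intro3}.
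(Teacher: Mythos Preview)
Your overall architecture is close to the paper's, but there is a genuine gap in your second step: you propose to ``identify a set of compact generators of $\mC E_\mC A^\mC Q$, namely the images under the derived $\mC Q$-localization of a suitable set of compact generators of $\mC E_\mC A$ (those supported on $\mC S_\mC A$).'' No such compact generators exist in general. In the recollement of Theorem~\ref{intro3} the category $\mC E_\mC A$ sits on the \emph{kernel} side; it is the right orthogonal $\mC T_\mC A^\perp$ to the localizing subcategory $\mC T_\mC A$ generated by the compact objects $\mC V_\mC C(a,-)\oslash Q_j$ with $a\in\mC A$. There is no reason for $\mC E_\mC A$ itself to be generated by compact objects of $\D[\mC C,\mC V]$; in the motivic example, $\mC E_\mC A^\mC Q=\mathbf{DM}^{eff}_{\mC C}(k)$, and its natural generators $C_*\bigl(\mC C(-,X)_{nis}\bigr)$ are unbounded Suslin complexes, not compact in $\D\bigl(Shv(\mC C)\bigr)$. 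Consequently the form of Neeman's theorem you invoke, which requires the localizing subcategory on the left to be generated by compacts of the ambient category, does not apply to $\mC E_\mC A^\mC Q$.

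The paper's route is to work on the other side: one defines $\mC T_\mC A^\mC Q$ as the localizing subcategory of $\D\bigl([\mC C,\mC V]/\mC Q\bigr)$ generated by the compact objects $\bigl(\mC V_\mC C(a,-)\oslash Q_j\bigr)_{\mC Q}$, and then the strict Voevodsky property is used not to identify a Verdier quotient but to prove the equality $(\mC T_\mC A^\mC Q)^\perp=\mC E_\mC A^\mC Q$. This is exactly where the hypothesis enters: for $X\in\mC E_\mC A^\mC Q$ one passes to the \K-injective resolution $kX$, and the strict $V$-property forces $kX\in\mC E_\mC A$, so that $\Hom\bigl((\mC V_\mC C(a,-)\oslash Q_j)_\mC Q,X\bigr)\cong\Hom\bigl(\mC V_\mC C(a,-)\oslash Q_j,kX\bigr)=0$. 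With that equality established, Krause's recollement for a compactly generated subcategory yields the six functors, and a direct comparison (an explicit equivalence $F^\mC Q\colon\mC T_\mC A^\mC Q\to\D\bigl([\mC C,\mC V]/\mC J_\mC A\bigr)$, checked on compact generators) replaces your Verdier-quotient identification. Your intuition that the strict $V$-property governs the kernel of the derived projection is correct; the fix is to apply it to identify $\mC E_\mC A^\mC Q$ as an orthogonal rather than to manufacture compact generators it does not possess.
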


The next step is to convert the previous theorem back into the motivic language,
and we are now in a position to discuss its main application.
In practice the derived category for the Grothendieck category $Shv(\mC C)$ of Nisnevich sheaves with reasonable 
transfers $\mC C$ on smooth algebraic varieties
$Sm/k$ plays the role of the middle category. In this language 
the left category will be nothing but Voevodsky's~\cite{Voe2} triangulated category of 
motives $\mathbf{DM}^{eff}_{\mC C}(k)$. Also, the Voevodsky theorem~\cite[3.2.6]{Voe2} computes
the functor $\iota_L^{\mC Q}$ as the Suslin complex $C_*$. We refer the reader to Section~\ref{voev} for 
details on how Theorem~\ref{intro4} encodes the above information about $\mathbf{DM}^{eff}_{\mC C}(k)$.

Therefore Theorem~\ref{intro4}
is a kind of categorical framework for the triangulated category of motives $\mathbf{DM}^{eff}_{\mC C}(k)$.
Precisely, the following theorem is true (see Theorem~\ref{shvthm}).

\begin{thm}\label{intro7}
Suppose $\mC C$ is a strict  $V$-category of correspondences in the sense of~\cite{Gar17}.
There exists a recollement of triangulated categories 
\begin{diagram*}[column sep=huge]
\mathbf{DM}^{eff}_{\mC C}(k)\arrow[r,"\iota^\mC Q"] &{\D\bigl(Shv(\mC C)\bigr)}
   \bendR{\iota_R^\mC Q} 
   \bendL{C_*}
   \arrow[r,"\lambda^\mC Q"]			
& {\D\bigl(Shv(\mC C)/\mC S_{\mathbb A^1}^{\mC Q}\bigr),}
   \bendR{\lambda_R^\mC Q} 
   \bendL{\lambda_L^\mC Q}
\end{diagram*}
where $\mC S_{\mathbb A^1}^{\mC Q}$ is some localizing Serre subcategory of $Shv(\mC C)$,
$C_*$ is the Suslin complex functor,
the functor $\lambda^{\mC Q}$ is induced by the $\mC S_{\mathbb A^1}^{\mC Q}$-localization functor 
$Shv(\mC C)\to Shv(\mC C)/\mC S_{\mathbb A^1}^{\mC Q}$, $\iota^{\mC Q}$ is inclusion,
and $\lambda_R^\mC Q$ is induced by the \K-injective resolution functor.
\end{thm}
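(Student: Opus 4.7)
The plan is to deduce Theorem~\ref{intro7} as a specialisation of the abstract Theorem~\ref{intro4} to the motivic setting. First I would take $\mC V$ to be a suitable closed symmetric monoidal Grothendieck category (for instance, chain complexes of abelian groups or of $R$-modules), so that $[\mC C,\mC V]$ identifies with the category of presheaves of $\mC V$-objects on the strict $V$-category of correspondences $\mC C$ of~\cite{Gar17}. I then set $\mC Q\subset[\mC C,\mC V]$ to be the localising Serre subcategory of presheaves with vanishing Nisnevich sheafification. Standard sheafification yields $[\mC C,\mC V]/\mC Q\simeq Shv(\mC C)$, and hence $\D\bigl([\mC C,\mC V]/\mC Q\bigr)\simeq\D\bigl(Shv(\mC C)\bigr)$, which supplies the middle term of the recollement, while the associated inclusion and exact localisation become the fully faithful embedding $Shv(\mC C)\hookrightarrow[\mC C,\mC V]$ and the sheafification functor, respectively.

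Next I would choose the full $\mC V$-subcategory $\mC A\subset\mC C$ to be the one encoding the $\mathbb A^1$-direction dictated by the strict $V$-structure on $\mC C$. Then $\mC S_{\mathbb A^1}^{\mC Q}$ is, by definition, the $\mC Q$-localisation of $\mC S_\mC A$ inside $Shv(\mC C)$, and the subcategory $\mC E_\mC A^{\mC Q}$ from Theorem~\ref{intro4} consists of complexes of Nisnevich sheaves with transfers whose cohomology sheaves belong to this $\mC Q$-localisation. By the $\mathbb A^1$-interpretation of $\mC A$ this is precisely the subcategory of complexes with homotopy invariant cohomology sheaves, that is, $\mathbf{DM}^{eff}_{\mC C}(k)$ in the sense of Voevodsky~\cite{Voe2}.

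To apply Theorem~\ref{intro4} I must then verify its two hypotheses. Compact generation of $\D\bigl(Shv(\mC C)\bigr)$, together with the fact that the Nisnevich sheafification functor respects compact generators, follows from representability of the transfer sheaves $\mC C(-,X)_{nis}$ and the standard finiteness properties of the Nisnevich topology on a strict $V$-category of correspondences. The main obstacle, and the very reason for introducing the (strict) Voevodsky property in Definition~\ref{vsprop}, is to verify that $\mC S_\mC A$ satisfies this property with respect to $\mC Q$: this is exactly the categorical translation of Voevodsky's celebrated theorem~\cite{Voe1} that the Nisnevich sheafification of a homotopy invariant presheaf with transfers is again homotopy invariant, and strictly so over a perfect base field. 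The work required here is to show that the axioms encoded in the strict $V$-structure on $\mC C$ are enough to reproduce Voevodsky's argument at the abstract level demanded by Definition~\ref{vsprop}.

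Once both hypotheses are in place, Theorem~\ref{intro4} delivers the asserted recollement, and it only remains to identify the functors. The right adjoint $\lambda_R^{\mC Q}$ is identified with the \K-injective resolution functor via the standard description of Bousfield localisation of derived categories of Grothendieck categories. The left adjoint $\iota_L^{\mC Q}$ to the inclusion $\mathbf{DM}^{eff}_{\mC C}(k)\hookrightarrow\D\bigl(Shv(\mC C)\bigr)$ is identified with the Suslin complex $C_*$ by appealing directly to Voevodsky's computation~\cite[3.2.6]{Voe2}. This completes the identification of all six functors in the recollement.
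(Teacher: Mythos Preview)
Your overall strategy---specialising Theorem~\ref{intro4} to the motivic setting---is exactly the paper's approach, and you correctly identify the two hypotheses to verify and the final identification of $\iota_L^{\mC Q}$ with $C_*$ via~\cite[3.2.6]{Voe2}. However, two of your specific choices do not work as written and would need to be repaired.

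First, the paper takes $\mC V=\Ab$, not chain complexes. In the framework of Theorem~\ref{intro4}, $[\mC C,\mC V]$ must be a Grothendieck category and one then passes to $\D[\mC C,\mC V]$; with your choice $[\mC C,\mC V]$ would already be complexes of presheaves, and the statement of Theorem~\ref{intro4} would not apply directly.

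Second, and more substantively, there is no full $\mC V$-subcategory $\mC A\subset\mC C$ whose associated $\mC S_{\mC A}$ is the category of homotopy invariant presheaves: the objects of $\mC C$ are smooth schemes, and ``$F(a)=0$ for all $a\in\mC A$'' cannot express homotopy invariance. The paper handles this by defining $\mC C^{\mathbb A^1}$ to consist of the finitely generated projectives $\mathbb Z_{\mC C}^{\mathbb A^1}(X):=\Ker\bigl(pr_{X,*}:\mathbb Z_{\mC C}(X\times\mathbb A^1)\to\mathbb Z_{\mC C}(X)\bigr)$ in $Pre(\mC C)$, so that $\Hom\bigl(\mathbb Z_{\mC C}^{\mathbb A^1}(X),F\bigr)=0$ for all $X$ is equivalent to $F$ being homotopy invariant (Lemma~\ref{a1inv}). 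These objects are not in $\mC C$; one first enlarges $\mC C$ to $\widetilde{\mC C}$ by adjoining them, using that $Pre(\widetilde{\mC C})\simeq Pre(\mC C)$ (Remark~\ref{BD}), and only then takes $\mC A=\mC C^{\mathbb A^1}\subset\widetilde{\mC C}$.

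Finally, a minor point: you describe verifying the strict Voevodsky property as ``work required \ldots\ to reproduce Voevodsky's argument''. In the paper this is immediate from the \emph{definition} of a strict $V$-category of correspondences in the sense of~\cite{Gar17}: that definition packages exactly the strict homotopy invariance statement, so no reproof of~\cite{Voe1} is needed here. Compact generation of $\D\bigl(Shv(\mC C)\bigr)$ is cited from~\cite[Section~6]{GP} rather than argued directly.
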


The authors would like to thank Jorge Vit\'oria for plenty of helpful discussions
about recollements of Abelian and triangulated categories.

\section{Enriched Category Theory}\label{putrya}

In this section we collect basic facts about enriched categories we
shall need later. We refer the reader to~\cite{Borceux} for details.
Throughout this paper the quadruple $(\mathcal{V},\otimes,\underline{\Hom},e)$ is
a closed symmetric monoidal category with monoidal product
$\otimes$, internal Hom-object $\underline{\Hom}$ and monoidal unit
$e$. We sometimes write $[a,b]$ to denote $\underline{\Hom}(a,b)$,
where $a,b\in\Ob\cc V$. We have structure isomorphisms
   $$a_{abc}:(a\otimes b)\otimes c \to a\otimes (b\otimes c),\quad l_a:e\otimes a\to a,\quad r_a:a\otimes e\to a$$
in $\cc V$ with $a,b,c\in\Ob\cc V$.

\begin{definition}
A {\it $\mathcal{V}$-category}  $\mathcal{C}$, or {\it a category
enriched over $\mathcal{V}$}, consists of the following data:
\begin{enumerate}
\item a class $\Ob\mathcal{(C)}$ of objects;
\item for every pair $a,b \in$ $\Ob\mathcal{(C)}$ of objects, an object  $\mathcal{V}_{\cc C}(a,b)$ of
$\mathcal{V}$;
\item for every triple $a,b,c \in$ $\Ob\mathcal{(C)}$ of objects, a composition morphism in
  $\mathcal{V}$,
 $$c_{abc}:\mathcal{V}_{\cc C}(a,b) \otimes  \mathcal{V}_{\cc C}(b,c) \to  \mathcal{V}_{\cc C}(a,c);$$
\item for every object $a \in \mathcal{C}$, a unit morphism $u_a:e\to\mathcal{V}_{\cc C}(a,a)$ in
$\mathcal{V}$.
\end{enumerate}
These data must satisfy the natural associativity and unit axioms.

When $\Ob\cc C$ is a set, the $\cc V$-category $\cc C$
is called a {\it small $\cc V$-category}. 
\end{definition}

\begin{definition} \label{functor}
Given $\mathcal{V}$-categories $\mathcal{A},\mathcal{B}$, a {\it
$\mathcal{V}$-functor\/} or an {\it enriched functor\/}
$F:\mathcal{A} \to \mathcal{B}$ consists in giving:
\begin{enumerate}
\item for every object $a \in \mathcal{A}$, an object $F(a) \in
\mathcal{B}$;
\item for every pair $a,b \in \mathcal{A}$ of objects, a morphism in  $\mathcal{V},$
$$F_{ab}:\mathcal{V}_{\cc A}(a,b) \to  \mathcal{V}_{\cc B}(F(a),F(b))$$
in such a way that the following axioms hold:
\begin{enumerate}
\item[$\diamond$] for all objects $a,a',a''\in\cc A$, diagram~\eqref{B6.11} below commutes (composition
axiom);
\item[$\diamond$] for every object $a\in\cc A$, diagram~\eqref{fun} below commutes (unit
axiom).
\end{enumerate}
\begin{equation}    \label{B6.11}
\xymatrix{ \cc V_\cc A(a,a')\otimes\cc V_\cc
A(a',a'')\ar[rr]^(.6){c_{aa'a''}}\ar[d]_{F_{aa'}\otimes F_{a'a''}}
&&\cc V_\cc A(a,a'')\ar[d]^{F_{aa''}}\\
\cc V_\cc B(Fa,Fa')\otimes\cc V_\cc
B(Fa',Fa'')\ar[rr]_(.6){c_{Fa,Fa',Fa''}}&&\cc V_\cc B(Fa,Fa'') }
\end{equation}
\begin{equation}    \label{fun}
\xymatrix{
e\ar[r]^(.4){u_a}\ar[dr]_{u_{Fa}}&\cc V_\cc A(a,a)\ar[d]^{F_{aa}}\\
&\cc V_\cc B(Fa,Fa) }
\end{equation}
\end{enumerate}
\end{definition}

\begin{definition}\label{trans}
Let $\cc A, \cc B$ be two $\cc V$-categories and $F,G:\cc A\to \cc
B$ two $\cc V$-functors. A {\it $\cc V$-natural transformation\/}
$\alpha:F \Rightarrow G$ consists in giving, for every object
$a\in\cc A$, a morphism
    $$\alpha_a:e \to \cc V_\cc B(F(a),G(a))$$
in $\cc V$ such that diagram below commutes, for all
objects $a,a'\in\cc A$.
\begin{equation*}    \label{B6.13}
\xymatrix{
&\cc V_{\cc A}(a,a')\ar[ddl]_{l^{-1}_{\cc V_{\cc A}(a,a')}}\ar[ddr]^{r^{-1}_{\cc V_{\cc A}(a,a')}}&\\
&&\\
e\otimes\cc V_{\cc A}(a,a')\ar[d]_{\alpha_a \otimes G_{aa'}}&
&\cc V_{\cc A}(a,a')\otimes e\ar[d]^{F_{aa'}\otimes \alpha_{a'}}\\
\cc V_{\cc B}(Fa,Ga)\otimes\cc V_{\cc
B}(Ga,Ga')\ar[ddr]_{c_{FaGaGa'}\text{ }}
&&\cc V_{\cc B}(Fa,Fa')\otimes\cc V_{\cc B}(Fa',Ga')\ar[ddl]^(.45){\text{       }\text{  }  c_{FaFa'Ga'}}\\
&&\\
&\cc V_{\cc B}(Fa,Ga')& }
\end{equation*}
\end{definition}

Any $\cc V$-category $\cc C$ defines an ordinary category $\it{\cc{UC}}$, also called the {\it
underlying category}. Its class of objects is $\Ob \cc C$, the
morphism sets are $\Hom_{{\cc{UC}}}{(a,b)}:=\Hom_{\cc
V}(e, \cc{V_C}{(a,b))}$ (see~\cite[p.~316]{Borceux}).

Let $\cc C, \cc D$ be two $\cc V$-categories. The {\it monoidal
product\/} $\cc C\otimes \cc D$ is the $\cc V$-category, where
   $$\Ob(\cc C\otimes \cc D):=\Ob\cc C\times\Ob\cc D$$
and
   $$\cc V_{\cc C\otimes \cc D}((a, x),(b, y)):=\cc V_{\cc C}(a, b)\otimes\cc V_{\cc D}(x, y),\quad a,b\in\cc C,x,y\in\cc D.$$

\begin{definition}
A $\cc V$-category $\cc C$ is a {\it right $\cc V$-module} if there
is a $\cc V$-functor $\act:\cc C\otimes \cc V \to \cc C$, denoted
$(c,A) \mapsto c\oslash A$ and a $\cc V$-natural unit isomorphism
$r_c:\act(c,e)\to c$ subject to the following conditions:
\begin{enumerate}
\item there are coherent natural associativity isomorphisms $c\oslash (A \otimes B) \to (c \oslash A) \otimes
B$;
\item the isomorphisms $c\oslash (e \otimes A)\rightrightarrows c \oslash A$ coincide.

\end{enumerate}

A right $\cc V$-module is {\it closed\/} if there is a $\cc
V$-functor
$$\coact:\cc V^{\op} \otimes \cc C \to \cc C$$
such that for all $A \in \Ob \cc V$, and $c \in \Ob\cc C$, the $\cc
V$-functor $\act(-,A):\cc C \to \cc C$ is left
 $\cc V$-adjoint to $\coact(A,-)$ and $\act(c,-):\cc V \to \cc C$
is left $\cc V$-adjoint to $\cc V_{\cc C}(c,-)$. 
\end{definition}

If $\cc C$ is a small $\cc V$-category, $\cc V$-functors from $\cc
C$ to $\cc V$ and their $\cc V$-natural transformations form the
category $[\cc C,\cc V]$ of $\cc V$-functors from $\cc C$ to $\cc
V$. If $\cc V$ is complete, then $[\cc C,\cc V]$ is also a $\cc
V$-category whose morphism $\cc
V$-object $\cc V_{[\cc C,\cc V]}(X,Y)$ is the end
   \begin{equation*}\label{theend}
    \int_{\Ob \cc C} \cc V (X(c),Y(c)).
   \end{equation*}
   
\begin{lemma}\label{closedmod}
Let $\cc V$ be a complete closed symmetric monoidal category, and 
$\cc C$ be a small $\cc V$-category. Then $[\cc C,\cc V]$ is a closed $\cc V$-module.
\end{lemma}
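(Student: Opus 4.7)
The plan is to define everything pointwise using the closed symmetric monoidal structure of $\cc V$ and then promote pointwise assertions to enriched ones via the end formula for the morphism $\cc V$-object of $[\cc C, \cc V]$. Concretely, I would set
\begin{equation*}
(X \oslash A)(c) := X(c) \otimes A, \qquad \coact(A,X)(c) := \underline{\Hom}(A, X(c)),
\end{equation*}
for $X \in [\cc C,\cc V]$, $A \in \cc V$, $c \in \Ob\cc C$. The enriched functoriality of $X \oslash A$ in $c$ is obtained by composing the action $\cc V_{\cc C}(c,c') \to \cc V(X(c),X(c'))$ of $X$ with the $\cc V$-functor $-\otimes A: \cc V \to \cc V$; similarly for $\coact(A,X)$ via $\underline{\Hom}(A,-)$. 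The bifunctoriality in $(X,A)$ and $(A,X)$ is then a routine verification using the coherence of the monoidal and closed structure of $\cc V$.

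Next I would produce the unit isomorphism $r_X: X \oslash e \to X$ pointwise from $r_{X(c)}:X(c)\otimes e\to X(c)$ and check $\cc V$-naturality directly from the corresponding coherence in $\cc V$. The associativity isomorphism $X\oslash(A\otimes B)\to (X\oslash A)\oslash B$ and the coincidence condition (2) likewise reduce to the associator and unit coherence of $\cc V$ applied levelwise in $c$.

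For the closed structure I need two $\cc V$-adjunctions. The first, $\act(-,A)\dashv\coact(A,-)$, follows from the chain
\begin{equation*}
\cc V_{[\cc C,\cc V]}(X\oslash A, Y) = \int_{c}\underline{\Hom}(X(c)\otimes A, Y(c)) \cong \int_{c}\underline{\Hom}(X(c),[A,Y(c)]) = \cc V_{[\cc C,\cc V]}(X,\coact(A,Y)),
\end{equation*}
using the closed structure of $\cc V$ objectwise together with the fact that the tensor--hom adjunction is $\cc V$-natural (so it survives under the end). The second, $\act(X,-)\dashv \cc V_{[\cc C,\cc V]}(X,-)$, is obtained by a parallel computation
\begin{equation*}
\cc V_{[\cc C,\cc V]}(X\oslash A, Y) \cong \int_{c}\underline{\Hom}(A,\underline{\Hom}(X(c),Y(c))) \cong \underline{\Hom}\bigl(A, \cc V_{[\cc C,\cc V]}(X,Y)\bigr),
\end{equation*}
where I pull $\underline{\Hom}(A,-)$ outside the end because internal hom in its second argument is a right $\cc V$-adjoint and therefore preserves the limit computing the end (this is where completeness of $\cc V$ is essential).

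The main obstacle will be checking that these adjunctions are genuinely \emph{enriched} rather than merely adjunctions of the underlying ordinary functors. The crux is that the tensor--hom adjunction isomorphisms in $\cc V$ are $\cc V$-natural in all three variables, which is what permits commuting them past the end and past evaluation at $c$. Once this is granted, naturality of the unit and counit in $[\cc C,\cc V]$ follows from a Yoneda/end argument, and $[\cc C,\cc V]$ becomes a closed $\cc V$-module as required.
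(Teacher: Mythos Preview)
Your proposal is correct and is the standard direct verification of the closed $\cc V$-module structure on $[\cc C,\cc V]$. The paper does not give its own argument for this lemma at all: its entire proof is the citation ``See~\cite[2.4]{DRO}'', so what you have written is essentially an expansion of the content of that reference rather than an alternative approach.
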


\begin{proof}
See~\cite[2.4]{DRO}.
\end{proof}

Given $c \in \Ob \cc C$, $X\mapsto X(c)$ defines the $\cc V$-functor
$\text{Ev}_c:[\cc C,\cc V]\to\cc V$ called {\it evaluation at c}. The
assignment $c\mapsto\cc V_\cc C (c,-)$ from $\cc C$ to $[\cc C,\cc V]$ is
again a $\cc V$-functor $\cc C^{\op}\to[\cc C,\cc V]$, called the {\it $\cc
V$-Yoneda embedding}. $\cc V_\cc C (c,-)$ is a representable
functor, represented by $c$.

\begin{lemma}[The Enriched Yoneda Lemma]\label{enryon}
Let $\cc V$ be a complete closed symmetric monoidal category and
$\cc C$ a small $\cc V$-category. For every $\cc V$-functor $X:\cc C
\to \cc V$ and every $c\in \Ob \cc C$, there is a $\cc V$-natural
isomorphism $X(c) \cong \cc V_\cc F (\cc V_\cc C (c,-),X)$.
\end{lemma}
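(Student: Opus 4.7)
The plan is to invoke the universal property of the end defining
$\cc V_{[\cc C,\cc V]}(\cc V_\cc C(c,-),X)=\int_{d\in\Ob\cc C}[\cc V_\cc C(c,d),X(d)]$
and to exhibit an explicit pair of mutually inverse morphisms between this object and $X(c)$ in $\cc V$, then to verify $\cc V$-naturality in $c$ and in $X$. Since the paper cites Borceux for enriched category background, the proof can be sketched and the detailed diagram chase delegated to \cite[\S6.3]{Borceux}.

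First I would construct $\phi_c\colon X(c)\to\int_{d}[\cc V_\cc C(c,d),X(d)]$. The $\cc V$-functor structure of $X$ supplies morphisms $X_{cd}\colon\cc V_\cc C(c,d)\to[X(c),X(d)]$, which by the closed symmetric monoidal adjunction $-\tens X(c)\dashv[X(c),-]$ transpose to morphisms $\widetilde X_{cd}\colon X(c)\to[\cc V_\cc C(c,d),X(d)]$. To invoke the universal property of the end, I must check that the family $\{\widetilde X_{cd}\}_{d\in\Ob\cc C}$ is a wedge (i.e.\ dinatural in $d$); this reduces, after transposing across the adjunction, to the composition axiom \eqref{B6.11} for the $\cc V$-functor $X$. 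The end's universal property then produces the unique $\phi_c$.

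Second I would construct the candidate inverse
$$\psi_c\colon\int_{d}[\cc V_\cc C(c,d),X(d)]\xrightarrow{\pi_c}[\cc V_\cc C(c,c),X(c)]\xrightarrow{[u_c,X(c)]}[e,X(c)]\xrightarrow{\cong}X(c),$$
where $\pi_c$ is the universal projection at $d=c$, $u_c\colon e\to\cc V_\cc C(c,c)$ is the unit of $\cc C$ at $c$, and the last isomorphism is the standard one coming from $l_{X(c)}$. The identity $\psi_c\circ\phi_c=\id_{X(c)}$ then follows by substituting the construction of $\phi_c$ and applying the unit axiom \eqref{fun} for $X$, which identifies $X_{cc}\circ u_c$ with $u_{X(c)}$, after which the triangle identities for the closed monoidal adjunction collapse the composite to $\id_{X(c)}$.

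The main obstacle is the reverse identity $\phi_c\circ\psi_c=\id$: by the universal property of the end it suffices to show that $\pi_e\circ\phi_c\circ\psi_c=\pi_e$ for every $e\in\Ob\cc C$, and this is the only step requiring a genuine diagram chase. It is carried out by combining the wedge condition satisfied by the family $\{\pi_d\}$ with the composition axiom \eqref{B6.11} for $X$, together with coherence of the associators and unitors; the computation is essentially that of the classical Yoneda lemma, performed internally in $\cc V$. Finally, the $\cc V$-naturality of the isomorphism in $c$ (and in $X$) is obtained by verifying that both $\phi$ and $\psi$ are compatible with the action of morphisms $\cc V_\cc C(c,c')\to\cc V(X(c),X(c'))$, which again reduces to \eqref{B6.11}. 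As noted above, this is precisely the content of \cite[Thm.~6.3.5]{Borceux}, and the quickest route is simply to cite it.
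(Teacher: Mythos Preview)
Your proposal is correct, and indeed more detailed than the paper's treatment: the paper states the Enriched Yoneda Lemma without proof, as one of the ``basic facts about enriched categories'' collected in Section~\ref{putrya} with a blanket reference to~\cite{Borceux}. Your sketch of the two mutually inverse maps and the final citation of \cite[Thm.~6.3.5]{Borceux} is exactly in the spirit of that section, so nothing needs to change.
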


\begin{lemma}\label{bicomplete}
If $\cc V$ is a bicomplete closed symmetric monoidal category and
$\cc C$ is a small $\cc V$-category, then $[\cc C,\cc V]$ is
bicomplete. (Co)limits are formed pointwise.
\end{lemma}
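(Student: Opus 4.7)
The plan is to construct arbitrary small (co)limits in $[\cc C,\cc V]$ by forming them pointwise in $\cc V$, equipping the resulting object-assignment with a $\cc V$-functor structure, and then verifying a $\cc V$-enriched universal property. Fix a small diagram $D\colon J\to \mathit{U}[\cc C,\cc V]$, i.e., a family $\{D_j\}_{j\in J}$ of $\cc V$-functors together with $\cc V$-natural transitions $D_j\to D_{j'}$.

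For limits, set $L(c):=\lim_{j\in J}D_j(c)$, which exists in $\cc V$ by bicompleteness. The enriched structure $L_{ab}\colon\cc V_\cc C(a,b)\to\cc V_\cc V(L(a),L(b))$ is built via its adjoint transpose $\cc V_\cc C(a,b)\tens L(a)\to L(b)$: using the universal property of $L(b)=\lim_j D_j(b)$, this map is determined by the compatible family
$$\cc V_\cc C(a,b)\tens L(a)\xrightarrow{\id\tens \pi_j}\cc V_\cc C(a,b)\tens D_j(a)\longrightarrow D_j(b),$$
in which the right arrow is the adjoint transpose of $(D_j)_{ab}$ and $\pi_j$ is the projection. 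Compatibility with transitions $D_j\to D_{j'}$ uses their $\cc V$-naturality, so each $\pi_j$ assembles into a $\cc V$-natural transformation $L\Rightarrow D_j$. One then checks that any $\cc V$-natural cone $M\Rightarrow D_j$ in $[\cc C,\cc V]$ factors uniquely through $L$ by applying the pointwise universal property in $\cc V$ at each $c$ and verifying that the resulting factorisation is itself $\cc V$-natural.

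For colimits, dually set $C(c):=\colim_{j\in J}D_j(c)$ in $\cc V$. Here the essential point is that, because $\cc V$ is closed symmetric monoidal, the functor $\cc V_\cc C(a,b)\tens-$ is a left adjoint (to $[\cc V_\cc C(a,b),-]$) and therefore preserves colimits. Hence
$$\cc V_\cc C(a,b)\tens C(a)\;\cong\;\colim_j\bigl(\cc V_\cc C(a,b)\tens D_j(a)\bigr),$$
and composing the action maps $\cc V_\cc C(a,b)\tens D_j(a)\to D_j(b)\to C(b)$ induces, by the universal property of the colimit, a morphism whose adjoint transpose is the desired $C_{ab}$. The coinsertions $D_j\to C$ are $\cc V$-natural by construction, and the universal property in $[\cc C,\cc V]$ is verified dually to the limit case.

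The main obstacle is the verification of the $\cc V$-functor axioms (associativity and unitality) for $L$ and $C$ and of the $\cc V$-naturality of the projections and coinsertions. All such identities are consequences of the uniqueness clauses in the universal properties of pointwise (co)limits in $\cc V$, together with the fact that $-\tens X$ and $X\tens-$ preserve colimits. A clean organising principle is that the evaluation functors $\mathrm{Ev}_c\colon[\cc C,\cc V]\to\cc V$ are jointly conservative and, by construction, preserve and reflect the (co)limits we build, reducing coherence in $[\cc C,\cc V]$ to coherence in $\cc V$, which holds by hypothesis; alternatively one may cite~\cite{Borceux} directly.
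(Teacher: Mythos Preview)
Your argument is correct: pointwise (co)limits acquire a $\cc V$-functor structure precisely because $\cc V_{\cc C}(a,b)\otimes-$ preserves colimits (closedness) and maps into limits via the universal property, and the coherence checks reduce to uniqueness in $\cc V$. The paper does not give an independent proof of this lemma at all; it simply refers to~\cite[6.6.17]{Borceux}, which you yourself mention as an alternative, so your sketch supplies strictly more detail than the paper while arriving at the same standard statement.
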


\begin{proof}
See~\cite[6.6.17]{Borceux}.
\end{proof}

\begin{corollary}\label{polezno}
Assume $\cc V$ is bicomplete, and let $\cc C$ be a small $\cc
V$-category. Then any $\cc V$-functor $X:\cc C\to\cc V$ is $\cc
V$-naturally isomorphic to the coend
    $$X\cong\int^{\Ob\cc C}\cc V_\cc C(c,-)\oslash X(c).$$
\end{corollary}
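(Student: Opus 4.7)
The plan is to reduce the claim to a pointwise statement in $\cc V$ and then verify it using the enriched Yoneda lemma. Because the $\cc V$-module action on $[\cc C,\cc V]$ provided by Lemma~\ref{closedmod} is given pointwise by $(Y \oslash A)(c') = Y(c') \otimes A$, and because colimits (hence coends) in $[\cc C,\cc V]$ are formed pointwise by Lemma~\ref{bicomplete}, evaluating the asserted coend at an arbitrary $c' \in \Ob\cc C$ reduces the statement to producing a $\cc V$-natural isomorphism
\[
\int^{c\in\Ob\cc C} \cc V_\cc C(c,c') \otimes X(c) \;\cong\; X(c'),
\]
with the coend existing by bicompleteness of $\cc V$.

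To produce the candidate map, I would form the cowedge $\lambda_c \colon \cc V_\cc C(c,c') \otimes X(c) \to X(c')$ as the composite of $X_{cc'} \otimes \id \colon \cc V_\cc C(c,c') \otimes X(c) \to \underline{\Hom}(X(c),X(c')) \otimes X(c)$ with the evaluation $\underline{\Hom}(X(c),X(c')) \otimes X(c) \to X(c')$ coming from the closed structure of $\cc V$. The composition and unit axioms~\eqref{B6.11} and~\eqref{fun} of $X$ as a $\cc V$-functor imply that $\{\lambda_c\}$ is $\cc V$-dinatural in $c$, so it factors uniquely through a morphism $\alpha_{c'} \colon \int^c \cc V_\cc C(c,c') \otimes X(c) \to X(c')$. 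To verify that $\alpha_{c'}$ is an isomorphism, I would apply $\underline{\Hom}(-,Y)$ for an arbitrary $Y \in \Ob\cc V$: using coend-to-end duality and the tensor-hom adjunction,
\[
\underline{\Hom}\Bigl(\int^c \cc V_\cc C(c,c') \otimes X(c),\, Y\Bigr) \;\cong\; \int_c \underline{\Hom}\bigl(\cc V_\cc C(c,c'),\, \underline{\Hom}(X(c),Y)\bigr),
\]
and the enriched Yoneda Lemma~\ref{enryon} applied to the $\cc V$-functor $c \mapsto \underline{\Hom}(X(c),Y)$ on $\cc C^{\op}$ identifies the right-hand side with $\underline{\Hom}(X(c'),Y)$. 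Unwinding the identifications shows the composite equals precomposition with $\alpha_{c'}$, so $\alpha_{c'}$ is invertible by Yoneda.

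$\cc V$-naturality of $\alpha_{c'}$ in $c'$ is a routine verification from the construction, and assembling the pointwise isomorphisms gives the claimed $\cc V$-natural isomorphism in $[\cc C,\cc V]$. The main obstacle is the enriched bookkeeping: every duality and adjunction must be invoked in its $\cc V$-enriched form, the coend must be interpreted as a weighted (enriched) colimit whose universal property factors $\cc V$-dinatural cowedges, and applying Yoneda to $c \mapsto \underline{\Hom}(X(c),Y)$ requires checking that this assignment is genuinely a $\cc V$-functor on $\cc C^{\op}$. These steps are conceptually clear but must be handled carefully to avoid circular appeals to the very isomorphism being proved.
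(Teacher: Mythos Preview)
Your argument is correct: this is the standard proof of the enriched co-Yoneda (density) formula, reducing to a pointwise statement and then testing against $\underline{\Hom}(-,Y)$ to invoke the enriched Yoneda Lemma~\ref{enryon} on $\cc C^{\op}$. The paper itself gives no argument at all---it simply cites \cite[6.6.18]{Borceux}---so your proposal is strictly more informative than what appears in the text; the proof sketched there in Borceux follows essentially the same strategy you outline.
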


\begin{proof}
See~\cite[6.6.18]{Borceux}.
\end{proof}

\section{Recollements for Grothendieck categories of enriched functors}

An \emph{adjoint triple} is a collection of functors $F,H:\mC C \to
\mC D$ and $G:\mC D \to \mC C$ such that $F$ is left adjoint to $G$ and $G$ is left adjoint to
$H.$ We denote this information $F\dashv G\dashv H.$ 

The following lemma is proven in~\cite[7.4.1]{Mac2}.

\begin{lemma}\label{mm}
Given an adjoint triple of functors $F\dashv G \dashv H$,
$F$ is fully faithful if and only if $H$ is fully faithful.
\end{lemma}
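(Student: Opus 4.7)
The plan is to reduce both fully-faithfulness conditions to the classical criterion in terms of units/counits of the two adjunctions, and then to exhibit a single commutative square of hom-sets that links the two criteria via Yoneda.

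First I would recall the standard fact that a left adjoint $L\dashv R$ is fully faithful precisely when the unit $\mathrm{id}\to RL$ is a natural isomorphism, and dually that a right adjoint is fully faithful precisely when the counit is a natural isomorphism. Applied to the adjoint triple $F\dashv G\dashv H$, this means that the statement of the lemma is equivalent to the assertion: writing $\eta\colon\mathrm{id}_{\mC C}\to GF$ for the unit of $F\dashv G$ and $\varepsilon'\colon GH\to\mathrm{id}_{\mC C}$ for the counit of $G\dashv H$, the transformation $\eta$ is a natural isomorphism if and only if $\varepsilon'$ is.

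The key step is to assemble, for each pair $c,c'\in\mC C$, the commutative square
\[
\xymatrix{
\Hom(GFc,c')\ar[r]^{\cong}\ar[d]_{(\eta_c)^{*}} & \Hom(c,GHc')\ar[d]^{(\varepsilon'_{c'})_{*}}\\
\Hom(c,c')\ar@{=}[r] & \Hom(c,c')
}
\]
where the top isomorphism is obtained by first applying the adjunction $G\dashv H$ and then $F\dashv G$. Commutativity is a direct chase: starting from $\phi\colon GFc\to c'$, the upper route produces $GH\phi\circ G\eta'_{Fc}\circ\eta_c$ (where $\eta'$ is the unit of $G\dashv H$), and postcomposing with $\varepsilon'_{c'}$ and using naturality of $\varepsilon'$ together with the triangle identity $\varepsilon'_{GFc}\circ G\eta'_{Fc}=\mathrm{id}_{GFc}$ collapses this to $\phi\circ\eta_c$. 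This is the only genuine computation; I expect it to be the main (and only real) obstacle, since one must be careful not to conflate the units and counits of the two different adjunctions.

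Given this square, suppose $\eta$ is a natural isomorphism, so that every $(\eta_c)^{*}$ is a bijection. Then commutativity forces every $(\varepsilon'_{c'})_{*}$ to be a bijection, and by the Yoneda lemma $\varepsilon'_{c'}$ is an isomorphism for every $c'$. Conversely, if $\varepsilon'$ is a natural isomorphism, then each $(\varepsilon'_{c'})_{*}$ is a bijection, hence so is each $(\eta_c)^{*}$, and by the dual Yoneda argument (letting $c'$ vary) $\eta_c$ is an isomorphism for every $c$. Translating back through the criterion of the first step yields the equivalence $F$ fully faithful $\iff$ $H$ fully faithful.
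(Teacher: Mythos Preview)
Your argument is correct. The commutative square you write down does commute for exactly the reason you indicate (naturality of $\varepsilon'$ followed by the triangle identity for $G\dashv H$), and the Yoneda conclusions in both directions are valid.

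The paper, however, does not give its own proof: it simply cites~\cite[7.4.1]{Mac2}. So there is nothing to compare at the level of strategy. What you have written is the standard self-contained argument, and it would serve perfectly well as a replacement for the bare citation.
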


\begin{definition}
A \emph{recollement of an Abelian category \mC A by Abelian
categories \mC B and \mC C\/} is a diagram of additive functors
\begin{diagram*}[column sep=huge]
  \mC B\arrow[r,"i"]  
&\mC A
   \bendR{i_R}
   \bendL{i_L}
   \arrow[r,"r"]			
&\mC C
   \bendR{r_R}
   \bendL{r_L} 
\end{diagram*}
such that $i_L \dashv i \dashv i_R$ , $r_L \dashv r \dashv r_R$ are adjoint
triples, the functors $i, r_L$ and $r_R$ are fully faithful and $\Im i = \Ker r.$
\end{definition}

We refer the reader to~\cite[2.8]{PV} for the following proposition.

\begin{proposition}\label{adjseq}
Given a recollement of Abelian categories as above, 
for all $A\in \mC A$ there are $B,B'\in B$ such that the following  sequences $$0\to iB\to
r_L\circ rA\to A \to i\circ i_L A\to 0,$$ $$0\to i\circ i_R A\to A\to r_R\circ r A\to i B'\to 0$$ 
induced by the unit and
counit morphisms of the adjunctions are exact in \mC A. Furthermore, $\mC B$ is a Serre
subcategory of $\mC A$ and $r$ is naturally equivalent to the quotient functor
$\mC A\to\mC A/\mC B$. In particular, $\mC C\cong\mC A/\mC B$.
\end{proposition}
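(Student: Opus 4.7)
The plan is to exploit three recurring tools. First, $r$ is exact because it has both a left and a right adjoint, so $\Ker r$ is closed under subobjects, quotients and extensions. Second, the full faithfulness of $i$, $r_L$ and $r_R$ is equivalent to the relevant (co)unit maps being isomorphisms: $i$ fully faithful gives $i_L i\simeq\mathrm{id}_\mathcal{B}\simeq i_R i$ (via $i_L\dashv i\dashv i_R$), while $r_L$ and $r_R$ fully faithful give $rr_L\simeq\mathrm{id}_\mathcal{C}\simeq rr_R$. Third, the hypothesis $\Im i=\Ker r$ together with the adjunctions yields the vanishings $r\circ i=0$, $i_L\circ r_L=0$, and $i_R\circ r_R=0$: for example, $\Hom(i_L r_L C,B)=\Hom(r_L C,iB)=\Hom(C,riB)=0$ for every $B\in\mathcal{B}$ and $C\in\mathcal{C}$, whence $i_L r_L=0$, and dually for $i_R r_R$.

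For the first exact sequence I would start from the counit $\varepsilon_A\colon r_L rA\to A$ of $r_L\dashv r$. Applying the exact functor $r$ yields $r\varepsilon_A$, which by a triangle identity is inverse to the unit iso $rA\to rr_L rA$; hence $K:=\Ker\varepsilon_A$ and $C:=\Coker\varepsilon_A$ lie in $\Ker r=\Im i$, so $K=iB$ for some $B\in\mathcal{B}$. To identify $C$, observe that the composite $r_L rA\xrightarrow{\varepsilon_A}A\xrightarrow{\eta_A}i\,i_L A$ vanishes, since $\Hom(r_L rA,i\,i_L A)=\Hom(rA,ri\,i_L A)=0$; consequently $\eta_A$ factors through $C$ as a map $\bar\eta\colon C\to i\,i_L A$. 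Applying the right exact functor $i_L$, and using $i_L r_L=0$ together with $i_L i\simeq\mathrm{id}_\mathcal{B}$, one finds $i_L C\simeq i_L A$, and a triangle identity shows that $i_L\bar\eta$ corresponds to the identity on $i_L A$. Since both $C$ and $i\,i_L A$ belong to $\Im i$, on which $i_L$ is a quasi-inverse to $i$, the map $\bar\eta$ must itself be an isomorphism. The second exact sequence is strictly dual, using the unit $A\to r_R rA$ of $r\dashv r_R$ and the counit $i\,i_R A\to A$ of $i\dashv i_R$.

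For the last two assertions, exactness of $r$ makes $\mathcal{B}\simeq\Ker r$ a Serre subcategory of $\mathcal{A}$. Since $r$ annihilates $\mathcal{B}$, it factors through a functor $\bar r\colon\mathcal{A}/\mathcal{B}\to\mathcal{C}$; the first exact sequence already established shows that $r_L rA\to A$ becomes an isomorphism in $\mathcal{A}/\mathcal{B}$ (its kernel and cokernel lie in $\mathcal{B}$), so the composite $\mathcal{C}\xrightarrow{r_L}\mathcal{A}\to\mathcal{A}/\mathcal{B}$ furnishes a quasi-inverse of $\bar r$, giving $\mathcal{C}\simeq\mathcal{A}/\mathcal{B}$. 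The principal subtlety is not conceptual but bookkeeping: one must carefully track which (co)unit plays which role in each half of the argument, and verify that the connecting maps in the exact sequences really are those induced by the adjunctions rather than merely abstract kernel or cokernel comparisons. Once these identifications are pinned down by applying $i_L$ or $i_R$ as above, the rest is routine diagram chasing.
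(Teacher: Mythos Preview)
Your argument is correct and is essentially the standard proof of this fact. The paper itself does not supply a proof of this proposition at all: it simply refers the reader to \cite[2.8]{PV}. So there is nothing to compare against beyond noting that your write-up is a self-contained version of the argument one finds in that reference.

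A couple of minor remarks on presentation. When you conclude that $\bar\eta\colon C\to i\,i_L A$ is an isomorphism, it is worth spelling out explicitly that on $\Im i$ the functor $i_L$ is not merely conservative but an actual quasi-inverse to $i$ (since $i$ is fully faithful with $i_L i\simeq\mathrm{id}$), so that $i_L\bar\eta$ being an isomorphism forces $\bar\eta$ to be one. And in the final step, when you claim that $q\circ r_L$ is quasi-inverse to $\bar r$, the naturality of the isomorphism $(q\circ r_L)\circ\bar r\simeq\mathrm{id}_{\mathcal{A}/\mathcal{B}}$ deserves a word: it follows because the counit $\varepsilon\colon r_L r\Rightarrow\mathrm{id}_\mathcal{A}$ is natural, and $q$ is a localization, so the pointwise isomorphism $q(\varepsilon_A)$ assembles into a natural isomorphism of functors on $\mathcal{A}/\mathcal{B}$. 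Neither of these is a gap, just places where a reader might pause.
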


In what follows we fix a closed symmetric monoidal Grothendieck
category \mC V with a set of generators $\{g_i\}_{i\in I}$, and we also choose \mC C to be a small \mC V-category.
By~\cite[4.2]{AlGG} the category of enriched functors $[\mC C,\mC V]$
is Grothendieck with the set of generators $\{\mC V_{\mC C}(c,-)\oslash g_i\mid c\in\mC C,i\in I\}$.

Consider a full \mC V-subcategory $\mC A$ of the \mC V-category \mC C. This means 
that $\mC A$ is some subcollection of objects of \mC
 C and \mC V-morphism objects $\mC V_\mC
A (a,b)=\mC V_\mC C (a,b)$ for all objects $a,b\in \mC A$. 
We shall consider the full subcategory  $\mC S_\mC A := \{ F \in [\mC C, \mC V]
\mid F(a) = 0 \text{ for all } a\in \mC A\}$. Obviously, the full subcategory $\mC S_\mC A$ is localizing.

The following result fits $\mC S_\mC A$ and
the Grothendieck categories of enriched functors $[\mC C, \mC V]$, $[\mC A, \mC V]$ in a recollement.

\begin{theorem}\label{Egg}
Suppose $\mC A$ is a full \mC V-subcategory of a small \mC V-category \mC C.
Then there is a recollement \begin{diagram*}[column sep=huge]
\mC S_\mC A\arrow[r,"i"] &{[\mC C, \mC V]}
   \bendR{i_R}
   \bendL{i_L}
   \arrow[r,"r"]			
& {[\mC A, \mC V]}\bendR{r_R} 
   \bendL{r_L} 
\end{diagram*}
with functors $i,r$ being the canonical inclusion and restriction functors
respectively. The functors $r_L,r_R$ are the enriched left and right Kan extensions respectively,
$i_R$ is the torsion functor associated with the localizing subcategory $\mC S_\mC A$.
\end{theorem}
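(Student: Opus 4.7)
The plan is to split the proof into two halves. First, I would establish the right adjoint triple $r_L \dashv r \dashv r_R$ via enriched Kan extensions along the fully faithful inclusion $\mC A \hookrightarrow \mC C$. Explicitly, using the closed $\mC V$-module structure of $[\mC C,\mC V]$ (Lemma~\ref{closedmod}) together with bicompleteness (Lemma~\ref{bicomplete}), I would set
$$(r_L X)(c) := \int^{a \in \mC A} \mC V_{\mC C}(a,c) \oslash X(a), \qquad (r_R X)(c) := \int_{a \in \mC A} [\mC V_{\mC C}(c,a), X(a)].$$
Standard coend manipulations (Fubini together with the fact that $r$ is pointwise evaluation) then yield the $\mC V$-adjunctions $r_L \dashv r \dashv r_R$.

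Second, I would verify that $r_L$ and $r_R$ are fully faithful. Since $\mC A \hookrightarrow \mC C$ is fully faithful as an enriched inclusion, for any $a \in \mC A$ we have
$$(r_L X)(a) = \int^{b \in \mC A} \mC V_{\mC C}(b,a) \oslash X(b) = \int^{b \in \mC A} \mC V_{\mC A}(b,a) \oslash X(b) \cong X(a),$$
the last isomorphism being Corollary~\ref{polezno} applied inside $[\mC A,\mC V]$. Hence the unit $X \to r r_L X$ is a $\mC V$-natural isomorphism, so $r_L$ is fully faithful; the same holds for $r_R$ either by a dual computation or by Lemma~\ref{mm}. The equality $\Im i = \Ker r$ is immediate from the definition of $\mC S_\mC A$.

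Third, I would produce the left adjoint triple. Since $[\mC C, \mC V]$ is a Grothendieck category and $\mC S_\mC A$ is localizing, the inclusion $i$ has the torsion functor $i_R$ as a right adjoint by standard localization theory. For $i_L$, set
$$i_L F := \coker\bigl(\varepsilon_F: r_L r F \to F\bigr),$$
where $\varepsilon$ is the counit of $r_L \dashv r$. Applying the exact restriction $r$ and using that the unit $\text{id} \to r r_L$ is an isomorphism, the triangle identities force $r\varepsilon_F$ to be a split epimorphism, so $r(i_L F) = 0$ and $i_L F \in \mC S_\mC A$. For the adjunction $\Hom(i_L F, iG) \cong \Hom(F, iG)$ with $G \in \mC S_\mC A$: any morphism $F \to iG$ precomposed with $\varepsilon_F$ yields $r_L r F \to iG$, which under $r_L \dashv r$ corresponds to $rF \to r(iG) = 0$ and hence vanishes, so morphisms out of $F$ into $iG$ factor uniquely through the cokernel $i_L F$.

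The main obstacle, in my view, is keeping the enriched Kan-extension calculus honest — in particular checking that the coend formulas really produce a $\mC V$-adjunction, not just an adjunction of underlying ordinary categories, and verifying that $r\varepsilon_F$ behaves as claimed via the triangle identities. Once these enriched verifications are in place, the recollement assembles formally from the general Grothendieck-category theory recalled in Proposition~\ref{adjseq}.
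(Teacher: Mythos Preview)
Your proposal is correct and follows essentially the same route as the paper: both arguments use the enriched Kan extension formulas for $r_L,r_R$, verify the $\mC V$-adjunctions by (co)end calculus, deduce full faithfulness from the Yoneda/co-Yoneda lemma together with Lemma~\ref{mm}, and construct $i_L$ as $\Coker(\varepsilon_F)$ with the adjunction checked exactly as you indicate. The only cosmetic difference is that the paper verifies full faithfulness of $r_R$ directly by an end computation and then invokes Lemma~\ref{mm} for $r_L$, whereas you do the dual, showing $r\,r_L\cong\id$ via Corollary~\ref{polezno}; these are interchangeable.
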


\begin{proof}
It is clear that $i$ is fully faithful and $\Im(i) = \Ker(r)$, as $r$ is the
same as precomposition with the inclusion $I: \mC A \hookrightarrow \mC C$. 
We shall construct each functor in turn, beginning with $r_R: [\mC A,\mC V]\to [\mC C,\mC
V].$ Define the functor $r_R$ as the enriched right Kan extension $$H \mapsto
{\int_{b\in\mC A}[\mC V_\mC C(-,Ib),H(b)]}.$$ This is well defined as \mC C is
small and \mC V is bicomplete. Further, we see that $r_RH(Ia)=H(a)$ for all
$a\in\mC A$, by the fully faithfulness of $I$ and the Yoneda lemma. We establish
that this is indeed right adjoint, or more precisely, there exists the 
following natural isomorphism 
   $$\mC \Hom_{[\mC A, \mC V]}(G\circ I,H) \cong \Hom_{[\mC C, \mC V]}(G, r_R(H)).$$ 
We shall prove it in the enriched case, from which the desired
isomorphism will follow. Consider the \mC V-morphism object 
   $$\mC V_{[\mC C, \mC V]}(G, r_R (H)) :=\int_{c\in \mC C} \mC [G(c),r_RH(c)]\cong
       \int_{c\in \mC C}\Big[G(c),\int_{b\in\mC A}[\mC V_\mC C(c,Ib),H(b)]\Big].$$
As the internal Hom of \mC V preserves \mC V-ends and the fact that we may swap limits, 
we see that this is naturally isomorphic to
   $$\int_{b\in\mC A}\int_{c\in \mC C}\Big[G(c),[\mC V_\mC C(c,Ib),H(b)]\Big].$$ 
Next, we use that \mC V is closed, and the limit preserving properties again to 
show that the latter object is isomorphic to
   $$\int_{b\in\mC A}\int_{c\in \mC C}\Big[\mC
       V_\mC C(c,Ib)\oslash G(c),H(b)\Big]\cong \int_{b\in\mC A}\Bigg[\int^{c\in \mC
       C}\mC V_\mC C(c,Ib)\oslash G(c),H(b)\Bigg].$$ 
This object is naturally isomorphic to 
   $$\int_{b\in\mC A}[G(Ib),H(b)]=\mC V_{[\mC A, \mC V]}(G\circ I, H).$$
Hence we can deduce the desired adjunction. 

We define the enriched left Kan extension $r_L:[\mC A,\mC V]\to [\mC C,\mC V]$ to be the
functor that acts on objects as 
   $$F\mapsto {\int^{a\in\mC A} \mC V_\mC C(Ia,-)\oslash F(a)}.$$ 
It is left adjoint to $r$ by~\cite[5.2]{AlGG}.
Having demonstrated that $ r_L \dashv r \dashv r_R,$ it is enough to know that either $r_L$ 
or $r_R$ is fully faithful to determine the other is also fully faithful (see Lemma~\ref{mm}).
We prove the full and faithfulness of $r_R$ as an enriched functor with the
following natural isomorphisms
\begin{align*}
\mC V_{[\mC C, \mC V]}(r_RX,r_RY)&\cong \mC V_{[\mC A, \mC V]}(r_R(X)\circ
I,Y)\phantom{XXXXXXXXXXxXXXXX}\\
&\cong \mC V_{[\mC A, \mC V]}\Bigg({\int_{b\in\mC A}[\mC V_\mC
C(-,Ib),X(b)]}\circ I,Y\Bigg)\\
&\cong \mC V_{[\mC A, \mC V]}\Bigg({\int_{b\in\mC A}[\mC V_\mC
C(I(-),Ib),X(b)]},Y\Bigg).
\end{align*}
Since $I$ is a fully faithful enriched functor, we have a natural
isomorphism to
\begin{align*}
&\cong \mC V_{[\mC A, \mC V]}\Bigg({\int_{b\in\mC A}[\mC V_\mC
A(-,b),X(b)]},Y\Bigg)\\
&= \int_{a\in\mC A}{\Bigg[\int_{b\in\mC A}[\mC V_\mC A(a,b),X(b)]},Y(a)\Bigg]\\
&\cong \int_{a\in\mC A}{\Big[\mC V_{[\mC A,\mC V]}(\mC V_\mC
A(a,-),X)},Y(a)\Big]\\
&\cong \int_{a\in\mC A}[X(a),Y(a)]=:\mC V_{[\mC A, \mC V]}(X,Y).\\
\end{align*}
Thus we conclude that both $r_L,r_R$ are fully faithful.	

Since $\mC S_\mC A$ is a localizing subcategory of the Grothendieck
category $[\mC C, \mC V]$, its inclusion always has a right adjoint. This right
adjoint is simply the $\mC S_\mC A$-torsion functor. However, it is known that
the adjoint triple $r_L\dashv r\dashv r_R$ can be used to define the
adjoint functors $i_L, i_R$ explicitly. For completeness, we demonstrate this.

First, given that
$r_L\dashv r$, denote the counit of this adjunction by $\eps :r_L
\circ r \Rightarrow \id,$ and the unit $\mu:\id \To r\circ r_L$.
Given the triangle identities, namely that $r(\eps_X) \circ \mu_{r(X)}=\id_X$,
we have that $r(\eps_X)$ is an epimorphism, and with $r$ exact, it follows that
$r(\Coker(\eps_X))\cong \Coker(r(\eps_X))\cong 0.$ Thus $\Coker(\eps_X)$ belongs 
to $\Ker(r)=
\mC S_\mC A$ for all $X \in [ \mC C,\mC V].$ Hence we may define a functor
   $$i_L:[\mC C,\mC V] \to \mC S_\mC A,\quad X \mapsto \Coker(\eps_X).$$ 
In order to show that this is indeed adjoint, we consider the canonical map  $X\to
\Coker(\eps_X)$. Take any map $f:X\to Y$ for some object $Y \in \mC S_\mC A.$
Since $Y\in \mC S_\mC A$ we have that $\eps_Y:0\to Y,$ then by naturality we see
that $f\circ\eps_X= 0.$ Thus there exists a unique map $\Coker(\eps_X)\to Y$ that factors $f$. Therefore we
conclude that $i_L$ is left adjoint to $i$. Conversely, we define the right
adjoint $i_R: Y \mapsto \Ker(\eta_Y)$ where $\eta$ is the unit of the
adjunction $r \dashv r_R,$ the proof is entirely dual. The construction of the desired recollement is completed.
\end{proof}
  
The preceding theorem and Proposition~\ref{adjseq} imply $[\mC A, \mC V]\cong [\mC C,\mC
V]/\mC S_\mC A.$ An equivalence of these categories was also proven in~\cite[5.3]{AlGG} by using the following
explicit functors. Denote by $\ell:[\mC C,\mC V]\to[\mC C,\mC V]/\mC S_\mC A$ the $\mC S_{\mC A}$-localization functor.
Then the composite functor
   \begin{equation}\label{kappa}
    \varkappa:=\ell\circ r_L:[\mC A,\mC V]\to[\mC C,\mC V]/\mC S_\mC A
   \end{equation}
is an equivalence of Grothendieck categories by~\cite[5.3]{AlGG}.

\begin{theorem}\label{Egg2}
Suppose $\mC A$ is a full \mC V-subcategory of a small \mC V-category \mC C.
Then there is a recollement \begin{diagram*}[column sep=huge]
\mC S_\mC A\arrow[r,"i"] &{[\mC C, \mC V]}
   \bendR{i_R}
   \bendL{i_L}
   \arrow[r,"\ell"]			
& {[\mC C,\mC V]/\mC S_\mC A}\bendR{\ell_R} 
   \bendL{\ell_L} 
\end{diagram*}
with functors $i_L,i,i_R$ being from Theorem~\ref{Egg}. The functor $\ell_R$ is the inclusion and
$\ell_L:=r_L\circ\varkappa^{-1}$.
\end{theorem}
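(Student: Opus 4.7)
The plan is to transport the outer adjoint triple $r_L\dashv r\dashv r_R$ of Theorem~\ref{Egg} across the equivalence $\varkappa=\ell\circ r_L\colon[\mC A,\mC V]\xrightarrow{\sim}[\mC C,\mC V]/\mC S_\mC A$ supplied by~\eqref{kappa}. The inner triple $i_L\dashv i\dashv i_R$, as well as the condition $\Im(i)=\mC S_\mC A=\Ker(\ell)$, is inherited directly from Theorem~\ref{Egg} together with the definition of the Serre quotient. The right adjoint $\ell_R$ exists and is fully faithful as the standard section functor associated to any localizing subcategory of a Grothendieck category, so the only real task is to construct $\ell_L$ and verify that it is fully faithful.

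The key step is a natural isomorphism $\ell\cong\varkappa\circ r$ of functors $[\mC C,\mC V]\to[\mC C,\mC V]/\mC S_\mC A$. To produce it, I would apply Proposition~\ref{adjseq} to the recollement of Theorem~\ref{Egg}: for every $Y\in[\mC C,\mC V]$ the unit and counit of $r_L\dashv r$ yield an exact sequence
$$0\to iB\to r_L r Y\to Y\to i\,i_L Y\to 0$$
in which both $iB$ and $i\,i_L Y$ lie in $\mC S_\mC A$. Since $\ell$ is exact and annihilates $\mC S_\mC A$, applying it gives $\ell(r_L r Y)\cong\ell Y$, i.e.\ $\varkappa\circ r\cong\ell$ in view of~\eqref{kappa}.

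Setting $\ell_L:=r_L\circ\varkappa^{-1}$ for a fixed quasi-inverse $\varkappa^{-1}$, the adjunction $\ell_L\dashv\ell$ then drops out from the chain of natural isomorphisms
$$\Hom(\ell_L X,Y)\cong\Hom(\varkappa^{-1}X,rY)\cong\Hom(X,\varkappa\, r Y)\cong\Hom(X,\ell Y),$$
using in order $r_L\dashv r$ (Theorem~\ref{Egg}), the equivalence $\varkappa$, and the identification $\varkappa\circ r\cong\ell$. Fully faithfulness of $\ell_L$ is immediate: it is the composition of the equivalence $\varkappa^{-1}$ with $r_L$, which is fully faithful by Theorem~\ref{Egg}. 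Together with the fully faithful $\ell_R$ and the inner triple from Theorem~\ref{Egg}, this completes all of the recollement axioms.

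The only delicate point is the identification $\ell\cong\varkappa\circ r$, which is less an obstacle than an observation: Proposition~\ref{adjseq} is tailor-made to ensure that both the kernel and cokernel of the canonical map $r_L r Y\to Y$ live in $\mC S_\mC A$. Once this is in place, everything else is a mechanical transport of structure along the equivalence $\varkappa$.
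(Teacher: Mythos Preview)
Your proof is correct and follows essentially the same route as the paper's: both establish $\varkappa\circ r\cong\ell$ from the fact that the counit $r_Lr(Y)\to Y$ has kernel and cokernel in $\mC S_\mC A$, then read off the adjunction $\ell_L\dashv\ell$ by transporting $r_L\dashv r$ along $\varkappa$. The only cosmetic differences are that the paper cites~\cite[5.3]{AlGG} for the key isomorphism where you invoke Proposition~\ref{adjseq}, and the paper deduces full faithfulness of $\ell_L$ from that of $\ell_R$ via Lemma~\ref{mm} rather than directly from the factorization $\ell_L=r_L\circ\varkappa^{-1}$.
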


\begin{proof}
One has $\Im(i) = \Ker(\ell)$ and $\ell_R$ is a right adjoint of $\ell$ by the general localization theory
of Grothendieck categories. To see that $\ell_L$ is left adjoint to $\ell$, let $X\in[\mC C,\mC V]/\mC S_\mC A$
and $Y\in[\mC C,\mC V].$ The proof of~\cite[5.3]{AlGG} shows that the adjunction morphism 
$r_Lr(Y)\to Y$ induces an isomorphism $\ell r_Lr(Y)\cong\ell(Y)$. Then
   $$\Hom_{[\mC C,\mC V]}(\ell_L(X),Y)=\Hom_{[\mC C,\mC V]}(r_L\circ\varkappa^{-1}(X),Y)
       \cong\Hom_{[\mC C,\mC V]/\mC S_{\mC A}}(X,\varkappa r(Y)).$$
But $\varkappa r(Y)=\ell r_Lr(Y)\cong \ell(Y)$. It follows that
   $$\Hom_{[\mC C,\mC V]/\mC S_{\mC A}}(X,\varkappa r(Y))\cong\Hom_{[\mC C,\mC V]/\mC S_{\mC A}}(X,\ell(Y)),$$
and hence $\ell_L$ is left adjoint to $\ell$. Since $\ell_R$ is fully faithful then so is $\ell_L$ by Lemma~\ref{mm}.
\end{proof}

\begin{example}\label{EGG}
Theorem~\ref{Egg} recovers an important example
of a recollement\cite[Chapter~IV]{Stein}. First, consider a ring $R$ and assume there
exists some idempotent $e=e^2\in R.$ Then there exists a recollement  

\begin{diagram*}[column sep=huge]
R/ReR\textrm{-Mod}\arrow[r] &R\textrm{-Mod}
   \bendR{}
   \bendL{}
   \arrow[r]			
& eRe\textrm{-Mod}.
\bendR{}
\bendL{}
\end{diagram*}
To derive this well known example from Theorem~\ref{Egg},
 we shall need to define a further preadditive category $\mC R$ with two objects
  $\mC E$, ${\mC E}^*.$ The morphism groups are given by
  \begin{diagram*}[row
  sep=tiny] \mC R (\mC E,\mC E):= eRe& \mC R (\mC E^*,\mC E):=eR(1-e)\\  
 \mC R (\mC E,\mC E^*):= (1-e)Re& \mC R (\mC E^*,\mC E^*):= (1-e)R(1-e).
\end{diagram*}
We define composition by multiplication in $R$. 
Set $\mC S_\mC E := \{ F \in [\mC R,
 \Ab] \mid F(\mC E) = 0\}\subset [\mC R,
 \Ab]$. Notice that $[\mC R,\Ab]$ is the ordinary category of additive functors from 
$\mC R$ to Abelian groups. By Theorem~\ref{Egg} one has the following recollement
  \begin{diagram*}[column sep=huge]
\mC S_\mC E\arrow[r,"i"] &{[\mC R,\Ab]}
   \bendR{i_R}
   \bendL{i_L}
   \arrow[r,"r"]			
& {[\mC E,\Ab].}
   \bendR{r_R}
   \bendL{r_L}
\end{diagram*}
We wish to relate this recollement to the module categories above.
We begin with the observation that $[\mC E,\Ab]$ is isomorphic to the
category of left $eRe$-modules, as $\End(\mC E)=eRe$, and write $M(\mC
E)=M\in\textrm{Mod-}eRe$.
Further,  $[\mC R,\Ab]$ is  equivalent to the category of left $R$-modules.
This follows from the fact that $[\mC R,\Ab]$ is equivalent to the category of
preadditive functors from a single object with morphism group the generalised matrix ring $\left( {\begin{array}{cc} eRe & eR(1-e) \\
(1-e)Re & (1-e)R(1-e)\end{array} } \right).$ 
The proof of this fact follows from a theorem of
Mitchell~\cite[Theorem 7.1]{Mitchell}. It says in particular that for a preadditive category $\mC C$ with finitely
many objects, a preadditive
functor $F:\mC C\to \Ab$ is equivalent to the module $\bigoplus_{X\in\mC C}F(X)$.
We also note that this matrix ring is the Peirce decomposition of $R$. Hence
the matrix ring is isomorphic to $R$ and $[\mC R,\Ab]$ is
equivalent to the category of left $R$-modules. Next, $\mC S_\mC E$ is a full 
subcategory of $[\mC R,\Ab]$, so given any $S\in \mC S_\mC E$ we have a canonical isomorphism
    $$ S \cong
{\int_{Y\in\mC R}\Hom_{\Ab}\big(\mC R(-,Y), S(Y)\big)}.$$  
  This is a restatement of the enriched Yoneda lemma. If we require that
  $S(\mC E)=0$ then this is the same as
$$0=S(\mC E)\cong\int_{Y\in\mC R}\Hom_{\Ab}\big(\mC R(\mC E,Y),
  S(Y)\big) \cong \Hom_{[\mC R,\Ab]}(\mC R(\mC E,-),S).$$
As an $R$-module, ${\mC
  R}(\mC E,-)$ is equivalent to ${\mC
  R}(\mC E,\mC E)\oplus{\mC
  R}(\mC E,\mC E^*)=eRe\oplus (1-e)Re=Re.$ Thus we derive that $S\in\mC S_\mC E$
  if and only if $$\Hom_R(Re,S)\cong\Hom_{[\mC R,\Ab]}(\mC R(\mC E,-),S)\cong
  0.$$ 
$\mC S_\mC E$ is equivalent to the subcategory of
$R$-modules $S$ with $\Hom_R(Re,S)=eS=0$. This subcategory can be identified with
the category of $R/ReR$-modules~\cite[p.~42]{Stein}.
   
We calculate $r_L,r,r_R$ as follows. We use the enriched Yoneda lemma again,
writing $I$ for the inclusion $\mC E\hookrightarrow \mC R$ and derive that
  $$r:X\mapsto X\circ I\cong{\int_{Y\in\mC R}\Hom_{\Ab}\big(\mC R(I(-),Y), X(Y)\big)}.$$ 
As an $eRe$-module it is isomorphic to
$\Hom_R(Re,X)$, and so $r=\Hom_R(Re,-)$ in terms of modules. Thus we are able
  to identify our gluing with the well known gluing (see, e.g.,~\cite[4]{Stein} and~\cite[2.7]{P})
\begin{diagram*}[column sep=huge]
{R/ReR}\textrm{-Mod}\arrow[r,"{inclusion}"] &{R}\textrm{-Mod}
\bendR{\Hom_R (R/ReR,-)}
   \bendL{R/ReR\tens_{R}-}
   \arrow[rr,"{\Hom_R (Re,-)}"]			
& &{eRe}\textrm{-Mod}.
\arrow[ll,"{Re\tens_{eRe}-}" above,shift right, end
anchor=north east,bend right, start anchor=north west]
\arrow[ll,"{\Hom_{eRe}(Re,-)}" below,shift left,end
anchor=south east,bend left, start anchor=south west]
\end{diagram*} 
\end{example}

Our next goal is to extend recollements of Theorems~\ref{Egg} and~\ref{Egg2}
to triangulated recollements of associated derived categories.

\section{Recollements for the derived categories of enriched functors}

We refer the reader to~\cite{Spal} for the notions and basic properties of $\K$-projective and $\K$-injective resolutions.
In what follows we always assume that the following theorem is satisfied, which was proven by the authors in~\cite[6.2]{GGME}.

\begin{theorem}\label{cond}
Let (\,$\mC V\!, \tens,e$) be a closed symmetric monoidal Grothendieck category
such that the derived category of \mC V  is a compactly
generated triangulated category with compact generators $\{P_j\}_{j\in J}$. Further,
suppose we have a small \mC V-category \mC C and that any one of the following
conditions is satisfied:
\begin{itemize}[leftmargin=1.4em]
  \item[1.] each $P_j$ is $\K$-projective;
  \item[2.] for every $\K$-injective $Y\!\in\! \Ch [\mC C,\mC V]$ and every $c\in
  \mC C$, the complex $Y(c)\in \Ch(\mC V)$ is $\K$-injective;
  \item[3.] $\Ch(\mC V)$ has a model structure, with quasi-isomorphisms
  being weak equivalences, such that for every injective fibrant
  complex $Y\in \Ch [\mC C,\mC V]$ the complex $Y(c)$ is fibrant in $\Ch(\mC V)$.
\end{itemize}
Then $\D[\mC C,\mC V]$ is a compactly generated triangulated category with
compact generators $\{\mC V_{\mC C}(c,-)\oslash Q_j\mid c\in \mC C, j\in J\}$ where, if we
assume either (1) or (2), $Q_j = P_j$ or if we assume (3) then $Q_j = P^c_j$
a cofibrant replacement of $P_j$.
\end{theorem}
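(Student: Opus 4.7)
The plan is to reduce everything to a derived version of the chain-level adjunction
\[
\Hom_{\K[\mC C,\mC V]}\bigl(\mC V_{\mC C}(c,-)\oslash Z,\,X\bigr)\;\cong\;\Hom_{\K(\mC V)}\bigl(Z,\,X(c)\bigr),
\]
which is given by Lemma~\ref{closedmod} applied degreewise, combined with the fact that evaluation at $c$ is exact and coproduct-preserving (Lemma~\ref{bicomplete}). Once this is upgraded to
\[
\Hom_{\D[\mC C,\mC V]}\bigl(\mC V_{\mC C}(c,-)\oslash Q_j[n],\,Y\bigr)\;\cong\;\Hom_{\D(\mC V)}\bigl(Q_j,\,Y(c)[n]\bigr),
\]
both compactness and generation of the family $\{\mC V_{\mC C}(c,-)\oslash Q_j\}$ follow almost automatically.

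The first substantial step is to promote this isomorphism from $\K$ to $\D$, and this is where the three alternative hypotheses come in. Under (1), since evaluation at $c$ preserves acyclic complexes, $\K$-projectivity of $P_j$ transfers across the chain-level adjunction to $\K$-projectivity of $\mC V_{\mC C}(c,-)\oslash P_j$ in $\Ch[\mC C,\mC V]$, so the formula drops out with no resolution needed on the right. Under (2), one computes the right-hand side by choosing a $\K$-injective resolution $Y\to Y'$ in $\Ch[\mC C,\mC V]$: the hypothesis says $Y'(c)$ is $\K$-injective in $\Ch(\mC V)$, which lets us replace $\K$-Hom by $\D$-Hom on each side. Under (3), the chain-level adjunction lifts to a Quillen adjunction (the condition on injective fibrant complexes is exactly that $\text{Ev}_c$ is right Quillen with respect to the injective model structure on $\Ch[\mC C,\mC V]$), and the derived adjunction evaluated at the cofibrant object $P_j^c$ and a fibrant replacement of $Y$ yields the formula.

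With the Hom formula in hand, the remaining work is short. Compactness of $\mC V_{\mC C}(c,-)\oslash Q_j$ follows because coproducts in $\D[\mC C,\mC V]$ are computed pointwise by Lemma~\ref{bicomplete}, so the compactness of $Q_j$ in $\D(\mC V)$ propagates across the formula. For generation, suppose $Y\in\D[\mC C,\mC V]$ is right-orthogonal in all shifts to every $\mC V_{\mC C}(c,-)\oslash Q_j$; then $\Hom_{\D(\mC V)}(Q_j,Y(c)[n])=0$ for all $c,j,n$, and since $\{Q_j\}$ still generates $\D(\mC V)$ (in case (3) because $P_j^c\simeq P_j$ in $\D(\mC V)$), we conclude that $Y(c)$ is acyclic for every $c\in\mC C$. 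Since quasi-isomorphisms in $\Ch[\mC C,\mC V]$ are detected pointwise, this forces $Y\cong 0$ in $\D[\mC C,\mC V]$.

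The principal obstacle I expect is case (3): without explicit $\K$-projective or $\K$-injective resolutions at one's disposal, one has to verify carefully that the pair $(\mC V_{\mC C}(c,-)\oslash-,\,\text{Ev}_c)$ is genuinely a Quillen adjunction between $\Ch(\mC V)$ and the injective model structure on $\Ch[\mC C,\mC V]$, and that the resulting derived $\Hom$ agrees with the triangulated $\Hom$ in $\D[\mC C,\mC V]$. The fibrancy-of-values hypothesis is precisely what makes this compatibility hold, but turning it into the stated Hom formula requires more bookkeeping than cases (1) and (2).
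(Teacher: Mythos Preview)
The paper does not actually prove this theorem: it is quoted verbatim from the authors' earlier work and introduced with ``which was proven by the authors in~\cite[6.2]{GGME}.'' So there is no in-paper proof to compare against.

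That said, your approach is correct and is evidently the one taken in~\cite{GGME}, since later in the present paper (see the proof of Lemma~\ref{perp}) the authors invoke precisely the isomorphism
\[
\Hom_{\D[\mC C,\mC V]}\bigl(\mC V_{\mC C}(c,-)\oslash Q_j,\,Y\bigr)\;\cong\;\Hom_{\D(\mC V)}\bigl(Q_j,\,Y(c)\bigr)
\]
and attribute it to ``the proof of~\cite[6.2]{GGME}.'' Your derivation of this isomorphism under each of the three hypotheses, and the subsequent deduction of compactness (via pointwise coproducts) and generation (via pointwise detection of acyclicity), is the expected argument and contains no gaps.
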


\begin{lemma}\label{rest}
Under the assumptions of Theorem~\ref{cond} let $\mC A$ be a full \mC V-subcat\-egory of $\mC C$. Then
$\Ch[\mC A, \mC V]$ also satisfies either condition (1), (2) or (3). In particular,
$\D[\mC A,\mC V]$ is a compactly generated triangulated category with
compact generators $\{\mC V_{\mC A}(a,-)\oslash Q_j\mid a\in \mC A, j\in J\}$ where, if we
assume either (1) or (2), $Q_j = P_j$ or if we assume (3) then $Q_j = P^c_j$
a cofibrant replacement of $P_j$.
\end{lemma}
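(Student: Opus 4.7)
The plan is to verify that each of the three conditions in Theorem~\ref{cond} transfers from $\mC C$ to the full subcategory $\mC A$; once that is done, the second assertion of the lemma follows by applying Theorem~\ref{cond} directly with $\mC A$ in place of $\mC C$. Condition~(1) is purely a statement about the compact generators $\{P_j\}$ of $\D(\mC V)$ and makes no reference to the indexing $\mC V$-category, so it is inherited by $\mC A$ automatically.

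For conditions~(2) and~(3) the natural tool is the right Kan extension $r_R:[\mC A,\mC V]\to[\mC C,\mC V]$ from Theorem~\ref{Egg}, extended levelwise to chain complexes. Two observations drive the argument. First, the restriction functor $r$ is exact (it is just precomposition with the inclusion $I:\mC A\hookrightarrow\mC C$), so on chain complexes it preserves both quasi-isomorphisms and monomorphisms; by the adjunction computation $\Hom_{\K}(W,r_R Z)\cong\Hom_{\K}(rW,Z)$ together with exactness of $r$, its right adjoint $r_R$ preserves $\K$-injective objects, and by the same token $r$ is left Quillen for the injective model structure on chain complexes of Grothendieck categories, so $r_R$ also preserves injective fibrant objects. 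Second, for every $Y\in\Ch[\mC A,\mC V]$ and every $a\in\mC A$ there is a natural isomorphism
\[
r_R(Y)(Ia)=\int_{b\in\mC A}[\mC V_\mC C(Ia,Ib),Y(b)]\cong\int_{b\in\mC A}[\mC V_\mC A(a,b),Y(b)]\cong Y(a),
\]
where the first isomorphism uses fully faithfulness of $I$ and the second is the enriched Yoneda lemma (this pointwise identity is in fact already recorded inside the proof of Theorem~\ref{Egg}).

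Combining the two observations, if $Z\in\Ch[\mC A,\mC V]$ is $\K$-injective then $r_R(Z)$ is $\K$-injective in $\Ch[\mC C,\mC V]$, so by condition~(2) for $\mC C$ the complex $r_R(Z)(c)$ is $\K$-injective in $\Ch(\mC V)$ for every $c\in\mC C$; specializing to $c=Ia$ yields $\K$-injectivity of $Z(a)$, which is condition~(2) for $\mC A$. The argument for~(3) is entirely parallel: an injective fibrant $Y\in\Ch[\mC A,\mC V]$ is sent by $r_R$ to an injective fibrant object of $\Ch[\mC C,\mC V]$, whence $Y(a)=r_R(Y)(Ia)$ is fibrant in $\Ch(\mC V)$ by condition~(3) for $\mC C$. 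With all three conditions now established for $\mC A$, Theorem~\ref{cond} applies and produces the asserted compact generators $\{\mC V_\mC A(a,-)\oslash Q_j\}$ of $\D[\mC A,\mC V]$. The only step that requires any real work is the pointwise identity $r_R(Y)(Ia)\cong Y(a)$; everything else is formal, so I do not anticipate a substantive obstacle.
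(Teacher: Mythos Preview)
Your proof is correct and follows essentially the same strategy as the paper: condition~(1) is trivial, and for~(2) and~(3) you show $r_R$ preserves $\K$-injective (respectively injective fibrant) objects and then use the pointwise identity $r_R(Y)(Ia)\cong Y(a)$. The only cosmetic difference is that for~(3) the paper verifies the lifting property for $r_RY$ against trivial cofibrations by hand via the adjunction, whereas you package the same computation as ``$r$ is left Quillen for the injective model structure, hence $r_R$ is right Quillen''; the content is identical.
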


\begin{proof}
Assume (1) holds for $\Ch[\mC C,\mC V]$. As this is a condition solely reliant
on $\D(\mC V)$ it holds automatically for $\Ch[\mC A,\mC V]$ as well.

Assume (2) holds for  $\Ch[\mC C, \mC V]$.
Given any \K-injective $ Y\in\Ch[\mC A, \mC V]$, consider $r_RY$ in $\Ch[\mC
C, \mC V]$. Given an acyclic complex $X \in \Ch[\mC C, \mC V]$, $rX$ is acyclic
in $\Ch[\mC A, \mC V]$. We see that 
   $$\K[\mC C, \mC V](X,r_RY)\cong \K[\mC A, \mC V](rX,Y)\cong 0.$$
Thus $r_RY$  is \K-injective in
$\Ch[\mC C, \mC V]$. 
By assumption for all $c\in \mC C$ we have $r_RY(c)$ is \K-injective in
$\Ch(\mC V)$. Since $r_RY(a)=Y(a)$ on all $a \in\mC A$ at every degree (see
the proof of Theorem~\ref{Egg}), it follows that $Y(a)$ is \K-injective.
We deduce that (2) holds for $\Ch[\mC A,\mC V]$.

Assume (3) holds for  $\Ch[\mC C, \mC V]$. Consider an injective fibrant
$Y\in\Ch[\mC A, \mC V]$.
Given $X, X^\prime \in \Ch[\mC C, \mC V]$ and an
injective quasi-isomorphism $\alpha:X \to X^\prime$, let
$f:X\to r_RY$ be any map. Then  $r_RY$ is injective fibrant if and only if there
exists a lift $X^\prime\to r_RY$ that factors $f$ in $\Ch[\mC C, \mC
V]$. Consider the following diagrams
\begin{diagram*}
X\ar[r,"f"]\ar[d,"\alpha" swap]&r_RY
\arrow[drr,phantom,"\overset{}{\Longleftrightarrow}" description] &  &
rX\ar[r,"\phi(f)"]\ar[d,"r\alpha" swap]&Y\\
X^\prime&\phantom{t} & 
&rX^\prime\ar[ru,"\exists h",dashed,swap]&\phantom{t}
\end{diagram*} 
where $\phi(f)$ is adjoint to $f$. We see that the existence of a map $X'\to r_RY$ extending $f$ 
follows from the existence of $h$ on the right.
Since $r$ is exact, it takes injective quasi-isomorphisms to injective quasi-isomorphisms. 
Further, $Y$ is injective fibrant in $\Ch[\mC A, \mC V]$ by assumption.
Therefore a lift $h:r(X^\prime)\to Y$ exists. Hence we
have a lift $X^\prime\to r_RY$ and deduce $r_R Y$ is
 injective fibrant in $\Ch[\mC C, \mC V]$.
By assumption, it follows that $r_R Y(c)$ is fibrant in
$\Ch(\mC V)$ for all $c\in \mC C$. In particular, for all $a\in \mC
A,$ we see that $Y(a)=r_RY(a)$ is fibrant. Consequently (3) holds for $\Ch[\mC A, \mC V]$.
\end{proof}

\begin{definition}
A \emph{recollement of a triangulated category \mC D by triangulated
 categories \mC B and \mC C\/} is a diagram of triangulated functors
 \begin{diagram*}[column sep=huge]
  \mC B\arrow[r,"\iota"]  
&\mC A
   \bendR{\iota_R}
   \bendL{\iota_L}
   \arrow[r,"\rho"]			
&\mC C
   \bendR{\rho_R}
   \bendL{\rho_L} 
\end{diagram*}
such that $\iota_L \dashv \iota \dashv \iota_R$, $\rho_L \dashv \rho \dashv
\rho_R$ are adjoint triples, the functors $\iota, \rho_L$ and $\rho_R$ are fully
faithful and $\Im \iota = \Ker \rho.$
\end{definition}

\begin{proposition}\label{adjtri}
Given a recollement of triangulated categories as above, 
 for all $A\in \mC A$ we have triangles $$\rho_L\rho
A\to A \to \iota \iota_L A\to (\rho_L\rho A)[1]\text{,\quad  and\quad}\iota
\iota_R A\to A\to \rho_R\rho A\to(\iota \iota_R
A)[1]$$ induced by the unit and counit morphisms of the adjunctions in \mC A.
\end{proposition}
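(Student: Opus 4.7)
The plan is to produce the first triangle by completing the counit $\varepsilon_A:\rho_L\rho A\to A$ of the adjunction $\rho_L\dashv\rho$ to a distinguished triangle and then to identify its cone with $\iota\iota_L A$; the second triangle is obtained by the formally dual argument starting from the unit $\eta_A:A\to\rho_R\rho A$ of $\rho\dashv\rho_R$.

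For the first triangle, I would first complete the counit to a distinguished triangle
$$\rho_L\rho A\xrightarrow{\varepsilon_A} A\xrightarrow{\alpha} C\to (\rho_L\rho A)[1].$$
Since $\rho_L$ is fully faithful, the unit of $\rho_L\dashv\rho$ is an isomorphism, and the triangle identities then force $\rho\varepsilon_A$ to be an isomorphism as well. Applying the exact functor $\rho$ to the triangle yields $\rho C=0$, so $C\in\Ker\rho=\Im\iota$, and therefore $C\cong\iota B$ for some $B\in\mC B$.

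To identify $B$ with $\iota_L A$, I would apply $\Hom_{\mC A}(-,\iota X)$ to the triangle for an arbitrary $X\in\mC B$. Since $\rho\iota=0$ (as $\Im\iota=\Ker\rho$ and $\rho$ is exact), adjunction gives
$$\Hom_{\mC A}(\rho_L\rho A[k],\iota X)\cong\Hom_{\mC C}(\rho A[k],\rho\iota X)=0$$
for $k\in\{0,1\}$. The long exact sequence attached to the triangle then collapses to natural isomorphisms
$$\Hom_{\mC B}(B,X)\cong\Hom_{\mC A}(\iota B,\iota X)\cong\Hom_{\mC A}(A,\iota X)\cong\Hom_{\mC B}(\iota_L A,X),$$
where the first step uses that $\iota$ is fully faithful and the last uses $\iota_L\dashv\iota$. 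By Yoneda $B\cong\iota_L A$, and the identification can be arranged so that $\alpha$ becomes the unit $A\to\iota\iota_L A$.

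The second triangle is produced dually: complete the unit $\eta_A$ to a distinguished triangle $F\to A\xrightarrow{\eta_A}\rho_R\rho A\to F[1]$, show that $\rho F=0$ using that $\rho_R$ is fully faithful, deduce $F\cong\iota B'$, and identify $B'\cong\iota_R A$ via $\Hom_{\mC A}(\iota X,-)$ and the adjunction $\iota\dashv\iota_R$. The main bookkeeping to watch is tracking which functors are fully faithful (here $\iota,\rho_L,\rho_R$) and arranging the Yoneda identifications so that $\alpha$ and its dual correspond to the genuine unit/counit morphisms of Proposition~\ref{adjtri} rather than to some abstract isomorphism.
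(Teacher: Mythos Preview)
Your argument is correct and is the standard proof of this fact. The paper itself does not give a direct argument: it simply cites~\cite[7.3(ii)]{P} and stops. What you have written is essentially what one finds upon unpacking that reference---complete the counit (resp.\ unit) to a triangle, use full faithfulness of $\rho_L$ (resp.\ $\rho_R$) together with the triangle identities to see that $\rho$ kills the third vertex, and then identify that vertex via Yoneda and the adjunction $\iota_L\dashv\iota$ (resp.\ $\iota\dashv\iota_R$). The only cosmetic point worth tightening is the last step, where you say the identification ``can be arranged'' so that $\alpha$ is the genuine unit $A\to\iota\iota_L A$: this follows because the Yoneda isomorphism $\Hom_{\mC B}(B,-)\cong\Hom_{\mC B}(\iota_LA,-)$ you obtain is precisely the one induced by precomposing $\iota^{-1}\alpha$ with the inverse of the unit's adjoint, so the resulting isomorphism $\iota_LA\xrightarrow{\sim}B$ intertwines $\alpha$ with the unit by construction.
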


\begin{proof}
This follows from~\cite[7.3(ii)]{P}.
\end{proof}

\begin{definition}
Define the following full subcategories of $\D[\mC C,\mC V]$. Given a full \mC
V-subcategory $\mC A\subset \mC C$, denote by  $\mC T_\mC A$ the smallest localizing subcategory
of $\D[\mC C,\mC V]$ generated by the subcollection of compact generators 
   $$\mC T_\mC A :=\langle \mC V_\mC C(a,-)\oslash
       Q_j \mid a\in \mC A, Q_j \in \D(\mC V)\rangle,$$ 
where $Q_j$ are those compact
generators of $\D (\mC V)$ as formulated in Theorem~\ref{cond}.
Define a further full subcategory of $\D[\mC C,\mC V]$ as 
   $$\mC E_\mC A := \{Y \in \mC D[\mC C, \mC V] \mid
       H_n(Y(a)) = 0 \text{ for all } a\in \mC A\text{ and }n \in \bB Z\}.$$
\end{definition}

\begin{lemma}\label{perp}
$\mC T_\mC A ^\perp = \mC E_\mC A. $
\end{lemma}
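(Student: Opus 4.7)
The plan is to exploit the natural isomorphism
\[
\Hom_{\D[\mC C,\mC V]}\bigl(\mC V_{\mC C}(a,-)\oslash Q_j,\,Y\bigr)\;\cong\;\Hom_{\D(\mC V)}\bigl(Q_j,\,Y(a)\bigr),
\]
which holds because $(-)\oslash Q_j$ is left adjoint to $[Q_j,-]$, evaluation at $a$ is computed by the enriched Yoneda lemma, and derived functors respect this adjunction in the setting of Theorem~\ref{cond} (this is exactly the identity used in~\cite{GGME} to establish compactness of the generators). Both inclusions of the desired equality reduce to this formula.

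For the inclusion $\mC E_\mC A\subseteq\mC T_\mC A^\perp$, I would take $Y\in\mC E_\mC A$, so $Y(a)\cong 0$ in $\D(\mC V)$ for every $a\in\mC A$. The isomorphism above immediately gives $\Hom_{\D[\mC C,\mC V]}(\mC V_{\mC C}(a,-)\oslash Q_j,Y)=0$ for all $a\in\mC A$ and all $j\in J$. Observe that the class $\{X\in\D[\mC C,\mC V]\mid\Hom_{\D[\mC C,\mC V]}(X,Y)=0\}$ is a localizing subcategory of $\D[\mC C,\mC V]$ (it is closed under shifts, small coproducts, and triangles). Since this class contains the set generating $\mC T_\mC A$, it contains all of $\mC T_\mC A$, so $Y\in\mC T_\mC A^\perp$.

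For the inclusion $\mC T_\mC A^\perp\subseteq\mC E_\mC A$, I would take $Y\in\mC T_\mC A^\perp$ and fix $a\in\mC A$. Because $\mC T_\mC A$ is closed under shifts, $(\mC V_{\mC C}(a,-)\oslash Q_j)[n]\in\mC T_\mC A$ for every $j\in J$ and $n\in\bB Z$, hence
\[
0=\Hom_{\D[\mC C,\mC V]}\bigl((\mC V_{\mC C}(a,-)\oslash Q_j)[n],\,Y\bigr)\cong\Hom_{\D(\mC V)}\bigl(Q_j,\,Y(a)[-n]\bigr).
\]
Since $\{Q_j\}_{j\in J}$ is a set of compact generators for $\D(\mC V)$ and these Hom-groups vanish for all $j\in J$ and all $n\in\bB Z$, we deduce $Y(a)\cong 0$ in $\D(\mC V)$, i.e.\ $H_n(Y(a))=0$ for every $n\in\bB Z$. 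Therefore $Y\in\mC E_\mC A$.

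The only real subtlety is keeping track of shifts on both sides of the adjunction isomorphism to conclude that the vanishing Hom-groups really detect the zero object in $\D(\mC V)$; beyond that the argument is formal once one has the evaluation-at-$a$ adjunction from~\cite{GGME} and the fact that the right-orthogonal to any class is a localizing subcategory. The definition of $\mC T_\mC A$ as the smallest localizing subcategory containing the specified generators is used essentially in both directions to bootstrap a statement about generators to a statement about the whole subcategory.
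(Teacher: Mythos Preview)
Your proof is correct and follows essentially the same approach as the paper: both directions rest on the isomorphism $\Hom_{\D[\mC C,\mC V]}(\mC V_{\mC C}(a,-)\oslash Q_j,Y)\cong\Hom_{\D(\mC V)}(Q_j,Y(a))$ (labelled~\eqref{tenshom} in the paper), together with the observations that compact generators of $\D(\mC V)$ detect zero objects and that the class $\{X\mid\Hom(X,Y)=0\}$ is localizing (the paper phrases this last step via Neeman's theorem~\cite[2.1]{Nee96}). One small slip of terminology: in your closing paragraph you call this class a ``right-orthogonal'', but $\{X\mid\Hom(X,Y)=0\}={}^\perp\{Y\}$ is the \emph{left}-orthogonal of $\{Y\}$; the mathematics of your argument is unaffected.
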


\begin{proof}
By the proof of~\cite[6.2]{GGME} there is a natural isomorphism 
\begin{align}\label{tenshom}
\Hom_{\D [\mC C, \mC V]}(\mC V_\mC
C(a,-)\oslash Q_j, Y) \cong \Hom_{\D(\mC V)}( Q_j, Y(a)).
\end{align} 
Since $Q_j$-s are compact generators of $\D(\mC V)$, it follows that
$\Hom_{\D [\mC C, \mC V]}(\mC V_\mC 
C(a,-)\oslash Q_j, Y)= 0$ for all $Q_j$-s if and only if $Y(a)$ is acyclic. 
Thus if we assume that $Y$ belongs to $\mC
E_\mC A$, then $Y(a)=0$ for all $a\in \mC A$, and hence
$\Hom_{\D [\mC C, \mC V]}(\mC V_\mC C(a,-)\oslash Q_j, Y)= 0$ for all $Q_j$-s. Therefore  $\mC
E_\mC A \subset \mC T_\mC A ^\perp$, because by a theorem of Neeman~\cite[2.1]{Nee96} 
$\mC T_\mC A$ is the smallest triangulated full subcategory 
containing $\mC V_\mC C(a,-)\oslash Q_j$-s. Likewise, assuming $X\in\mC T_\mC A^\perp$ 
then $\Hom_{\D [\mC C, \mC V]}(\mC V_\mC 
C(a,-)\oslash Q_j, Y)= 0$ for all $a\in\mC A$ and $Q_j$-s. Since $Q_j$-s are
compact generators of $\D(\mC V)$, it follows from the isomorphism~\eqref{tenshom}
that $Y(a)$ is acyclic for all $a\in
\mC A$. Therefore $\mC T_\mC A ^\perp \subset\mC E_\mC A.$ 
We conclude that  $\mC T_\mC A ^\perp =\mC E_\mC A.$
\end{proof}

\begin{lemma}\label{comp}
There is a recollement of triangulated categories
\begin{diagram*}[column sep=huge]
\mC T_\mC A^\perp\arrow[r,"\iota"] &{\D[\mC C,\mC V]}
\bendR{\iota_R} 
\bendL{\iota_L}
\arrow[r,"\tau"]		
& \mC T_\mC A,
\bendR{\tau_R} 
\bendL{\tau_L}
\end{diagram*}
in which $\iota,\tau_L$ are inclusions.
\end{lemma}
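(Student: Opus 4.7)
The plan is to derive the lemma as a direct application of the standard recollement theorem for compactly generated triangulated categories, as formulated by Krause~\cite[5.6.1]{Krause2} (based on Neeman~\cite{Nee96}). The hypothesis of that theorem is that one has a localizing subcategory of a compactly generated triangulated category generated by a set of objects that remain compact in the ambient category. I would first verify that this hypothesis is met: by Theorem~\ref{cond}, the category $\D[\mC C,\mC V]$ is compactly generated with compact generators $\{\mC V_\mC C(c,-)\oslash Q_j \mid c\in \mC C, j\in J\}$, and by construction $\mC T_\mC A$ is the smallest localizing subcategory generated by the subset of these indexed by $a\in\mC A$. Hence $\mC T_\mC A$ is compactly generated by objects that are compact in $\D[\mC C,\mC V]$ itself.

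Invoking the theorem then produces the whole recollement diagram: the pair of adjoint triples $\iota_L\dashv \iota\dashv \iota_R$ and $\tau_L\dashv \tau\dashv \tau_R$, with $\iota$, $\tau_L$, and $\tau_R$ fully faithful, where $\iota$ and $\tau_L$ are literally the inclusions of $\mC T_\mC A^\perp$ and $\mC T_\mC A$ into $\D[\mC C,\mC V]$ respectively. The remaining recollement axiom, $\Im(\iota)=\Ker(\tau)$, I would verify by hand using the adjunction $\tau_L\dashv\tau$: for $X\in\D[\mC C,\mC V]$, the condition $\tau X=0$ is equivalent to $\Hom_{\mC T_\mC A}(T,\tau X)=0$ for every $T\in \mC T_\mC A$, which by adjunction translates to $\Hom_{\D[\mC C,\mC V]}(\tau_L T,X)=0$ for every such $T$, i.e., $X\in \mC T_\mC A^\perp = \Im(\iota)$.

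There is really no substantive obstacle here beyond checking that the compact generation hypothesis is correctly invoked; the heart of the matter was already absorbed into Theorem~\ref{cond}. In particular, the existence of the outer adjoints $\iota_R$ and $\tau_R$, which is what distinguishes a full recollement from a plain Bousfield localization, is automatic in this compactly generated smashing setting through the Brown representability arguments underlying Neeman's theorem. Consequently no explicit construction of these adjoints is needed, and the proof reduces to a citation of the general result together with the short verification of $\Ker(\tau)=\mC T_\mC A^\perp$ described above.
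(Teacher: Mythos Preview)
Your proposal is correct and matches the paper's approach exactly: the paper's proof is a single sentence citing~\cite[5.6.1]{Krause2} together with the observation that $\mC T_\mC A$ is compactly generated by a subcollection of the compact generators of $\D[\mC C,\mC V]$. Your additional verification of $\Ker(\tau)=\mC T_\mC A^\perp$ is fine but already subsumed in the cited result.
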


\begin{proof}
This follows from~\cite[5.6.1]{Krause2} if we observe that the subcategory $\mC T_\mC A$ is 
compactly generated by a subcollection of compact generators.
\end{proof}

\begin{lemma}\label{equ}
Let \mC S and \mC T be compactly generated triangulated categories. Suppose
there exists a set of compact generators $\Sigma$ in \mC S and a triangulated functor 
$F:\mC S\to \mC T$ that preserves direct sums such that
\begin{itemize}
  \item[1.] the collection $\bigl\{F(X)|X\in \Sigma\bigr\}$ is a set of compact generators in \mC T\!,
  \item[2.] for any $X,Y$ in $\Sigma$, the induced map 
  $$F_{X,Y[n]}:\Hom_{\mC S} \bigl(X,Y[n]\bigr)\to \Hom_{\mC T}\bigl(FX,FY[n]\bigr)$$ is an isomorphism for all $n\in\bB Z$.
\end{itemize}
Then $F$ is an equivalence of triangulated categories.
\end{lemma}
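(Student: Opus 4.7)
The plan is to show that $F$ is fully faithful and essentially surjective, from which the equivalence follows. Full faithfulness will be proved by the standard ``two-sweeps'' argument over the compact generators: first vary the second variable, then the first.

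\medskip

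\textbf{Step 1 (full faithfulness).} Fix $X\in\Sigma$ and let
$$\mC S_X:=\{Y\in\mC S\mid F_{X,Y[n]}\text{ is an isomorphism for every }n\in\bB Z\}.$$
For a triangle $Y'\to Y\to Y''\to Y'[1]$ in $\mC S$, applying the cohomological functors $\Hom_{\mC S}(X,-)$ and $\Hom_{\mC T}(FX,F(-))$ (together with the naturality of $F_{X,-}$) produces a ladder of long exact sequences; a Five-Lemma argument then shows $\mC S_X$ is triangulated. Closure under arbitrary coproducts is where both compactness hypotheses are used: $X$ is compact in $\mC S$, $FX$ is compact in $\mC T$, and $F$ preserves direct sums, so for any family $\{Y_i\}$ in $\mC S_X$ both sides of $F_{X,\bigoplus Y_i[n]}$ factor through the product of the $F_{X,Y_i[n]}$. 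Since $\mC S_X\supset\Sigma$ by hypothesis~(2), and $\Sigma$ is a set of compact generators of $\mC S$, we conclude $\mC S_X=\mC S$.

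Now fix any $Y\in\mC S$ and let
$$\mC S^Y:=\{X\in\mC S\mid F_{X,Y[n]}\text{ is an isomorphism for every }n\in\bB Z\}.$$
The same Five-Lemma argument shows this is triangulated, and closure under coproducts is now automatic because $\Hom_{\mC S}(-,Y[n])$ and $\Hom_{\mC T}(-,FY[n])\circ F$ both convert coproducts into products (again using that $F$ preserves direct sums). By the previous paragraph, $\Sigma\subset\mC S^Y$, so $\mC S^Y=\mC S$. This gives full faithfulness.

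\medskip

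\textbf{Step 2 (essential surjectivity).} The essential image of $F$ is a full triangulated subcategory of $\mC T$; since $F$ preserves coproducts and is fully faithful, it is also closed under arbitrary direct sums, hence is a localizing subcategory of $\mC T$. It contains $\{FX\mid X\in\Sigma\}$, which by hypothesis~(1) is a set of compact generators of $\mC T$. Therefore the essential image coincides with $\mC T$.

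\medskip

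The main point requiring care is the closure of $\mC S_X$ under arbitrary coproducts in Step~1: this is exactly where the three ingredients (compactness of $X$ in $\mC S$, compactness of $FX$ in $\mC T$, and coproduct-preservation by $F$) must all be brought to bear simultaneously. Once this is in place, the cascade through Step~1 and Step~2 is formal.
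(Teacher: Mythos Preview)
Your proof is correct and follows essentially the same two-sweep strategy as the paper's. The only difference is cosmetic: the paper first extends hypothesis~(2) from $\Sigma$ to all of $\mC S^c$ by induction over the thick-closure filtration $\Sigma_0\subset\Sigma_1\subset\cdots$ before running the two sweeps, whereas you run the sweeps directly from $\Sigma$; your version is slightly more economical since that intermediate extension is not actually needed.
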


\begin{proof} 
To show that $F$ is an equivalence, we need to demonstrate that it
is fully faithful and essentially surjective. Without loss of generality, we may
assume that $\Sigma$ is closed under direct summands 
and shifts. Denote the collection of compact objects 
belonging to $\mC S$ by $\mC S^c$. Then $\mC S^c$ is recovered as
$\bigcup_{n\geq 0} \Sigma_n$ where $\Sigma_0 = \Sigma$ and for $n> 0,$ we set
$\Sigma_n$ to consist of the direct summands of objects in
$\{Z\in \mC S^c| \textrm{ there exists a triangle } X\to Y\to Z \to X[1]\textrm{ with } X,Y\in \Sigma_{n-1}\}.$
We also use here Neeman's Theorem~\cite[2.1]{Nee96}.
We have that $F$ is fully faithful on $\Sigma_0$ by assumption. 

Using a long exact sequence and
the five lemma we also have that (2) holds for 
$X\in\Sigma_0,Y\in \Sigma_1$, similarly any $X\in\Sigma_1,Y\in\Sigma_0$.
Suppose $Z,Z'\in\Sigma_1,$ then there is a triangle $X\to Y\to Z\to X[1]$ with
$X,Y\in\Sigma_0$. Consider a commutative diagram
\begin{diagram*}(Z',X) \arrow[r]\arrow[d]& (Z',Y) \arrow[r]\arrow[d]&
(Z',Z) \arrow[r]\arrow[d,dashed]&(Z',X[1]) \arrow[r]\arrow[d]&\dots\\
(FZ',FX)\arrow[r] &  \arrow[r](FZ',FY)&  \arrow[r](FZ',FZ) &
\arrow[r](FZ',FX[1])&\dots
\end{diagram*} 
where the vertical arrows are those induced by the functor $F.$
Applying the five lemma, we see that the dashed arrow is an isomorphism and can
conclude that (2) holds for $\Sigma_1.$ Proceeding by induction, we see
that (2) holds for $\Sigma_n$, for all $n\in\bB Z,$ and further holds for
$\mC S^c.$ We also see that $F$ takes objects in $\mC S^c$ to compact objects in
$\mC T\!.$

Let $X\in\mC S^c$ and let $\mC S'$ be the full subcategory in \mC S of those objects $Z\in\mC S$
for which the homomorphism
   $$F_{X,Z[n]}:\Hom_{\mC S} \bigl(X,Z[n]\bigr)\to \Hom_{\mC T}\bigl(FX,FZ[n]\bigr)$$ 
is an isomorphism for all $n\in\bB Z$. We have shown that $\mC S^c\subset \mC S'.$ Using
the five lemma, one can show as above that $\mC S'$ is triangulated. We claim that $\mC S'$ is
closed under direct sums. Indeed, let $\{Z_i\}_I$ be a family of objects in $\mC
S'$. Consider the following commutative diagram
\begin{diagram*}
(X,\oplus_IZ_i)\arrow[d,dashed]\arrow[r,
"\cong"]&\arrow[d,"\cong"]\oplus_I(X,Z_i)\\
(FX,\oplus_IFZ_i)\arrow[r, "\cong"]&\oplus_I(FX,FZ_i).
\end{diagram*} 
Since $F$ preserves direct sums by assumption, 
the dashed arrow is an isomorphism and $\mC S'$ is closed under
direct sums. By Neeman's Theorem~\cite[2.1]{Nee96} this implies that $\mC S'=\mC S$. 

Now let $\mC S''$ be the full subcategory in \mC S of those objects $Y\in\mC S$
for which the map
   $$F_{Y,Z}:\Hom_{\mC S} (Y,Z)\to \Hom_{\mC T}(FY,FZ)$$ 
is an isomorphism for all $Z\in\mC S$. We 
have shown that $\mC S^c\subset \mC S''$. $\mC S''$ is plainly
closed under direct sums. Using
the five lemma, one can show as above that $\mC S'$ is triangulated.
By Neeman's Theorem~\cite[2.1]{Nee96} $\mC S''=\mC S$, and hence $F$ is fully faithful.

It remains to show that $F$ is essentially surjective. 
Let $\mC T'$ be the essential image of $F$. Then $\mC T^c\subset \mC T'$\!, because
$F(\mC S^c)$ is a full subcategory of compact generators and one can use the same induction
arguments as above to show that any compact objects in $\mC T$ is isomorphic to the
image of a compact object in $\mC S$. Clearly, $\mC T'$ is triangulated and closed under 
direct sums. By Neeman's Theorem~\cite[2.1]{Nee96} $\mC T'=\mC T$\!, and hence $F$ is an equivalence.
\end{proof}

Recall that $\D [\mC C,\mC V]$ is compactly generated by objects
$\bigl\{\mC V_\mC C(c,-)\oslash Q_j\mid c\in\mC C\bigr\}$ where $\{Q_j\}_J$ is a family
of compact generators of $\D(\mC V)$ (see Theorem~\ref{cond}). Given 
a collection of objects $\mC A\subset\mC C$, the exact restriction functor
$r:[\mC C,\mC V]\to[\mC A,\mC V]$ of Theorem~\ref{Egg} induces a 
triangulated functor $\rho:\D [\mC C,\mC V]\to\D[\mC A,\mC V]$ that applies
$r$ degreewise to any complex. By Lemma~\ref{rest} $\bigl\{\mC V_\mC A(a,-)\oslash Q_j\mid a\in\mC A,j\in J\bigr\}$
is a family of compact generators of $\D [\mC A,\mC V]$.

\begin{lemma}\label{L6}
For all $a\in \mC A$, $X\in \D [\mC C,\mC V]$ and $Q\in\{Q_j\}_J$, we have a
natural isomorphism $\rho:\Hom_{\D [\mC C,\mC V]}\bigl(\mC V_\mC C(a,-)\oslash Q,X\bigr)\xrightarrow{\cong}
\Hom_{\D[\mC A,\mC V]}\bigl(\mC V_\mC A(a,-)\oslash 
Q,\rho X\bigr)$ induced by the triangulated functor $\rho$.
\end{lemma}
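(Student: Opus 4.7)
The strategy is to reduce both sides of the claimed isomorphism to a common $\Hom$-group in $\D(\mC V)$ by applying the natural isomorphism~\eqref{tenshom} from Lemma~\ref{perp}, and then to verify that the induced map $\rho_*$ corresponds to the identity under these reductions.

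First, \eqref{tenshom} itself gives
$$\Hom_{\D[\mC C,\mC V]}\bigl(\mC V_\mC C(a,-)\oslash Q, X\bigr)\cong \Hom_{\D(\mC V)}\bigl(Q, X(a)\bigr).$$
By Lemma~\ref{rest}, $\D[\mC A,\mC V]$ is itself compactly generated with compact generators $\mC V_\mC A(a,-)\oslash Q_j$ and satisfies the hypotheses of Theorem~\ref{cond}. Re-running the argument from the proof of Lemma~\ref{perp} in the setting of $\D[\mC A,\mC V]$ produces the analogous isomorphism
$$\Hom_{\D[\mC A,\mC V]}\bigl(\mC V_\mC A(a,-)\oslash Q, \rho X\bigr)\cong \Hom_{\D(\mC V)}\bigl(Q, (\rho X)(a)\bigr).$$
Because $a\in \mC A$ and $\rho$ is induced by degreewise precomposition with the inclusion $I:\mC A\hookrightarrow \mC C$, one has $(\rho X)(a)=X(a)$ in $\D(\mC V)$. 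Hence both sides of the required isomorphism are naturally identified with $\Hom_{\D(\mC V)}(Q, X(a))$.

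It remains to check that $\rho_*$ corresponds to the identity under these identifications. Tracing through the construction of \eqref{tenshom} in~\cite[6.2]{GGME}, both isomorphisms send a chain map $f$ to its ``evaluation at $a$'', obtained by precomposing $f$ with the natural map $Q\to \mC V_\mC C(a,a)\oslash Q$ coming from the unit $e\to \mC V_\mC C(a,a)$ and then extracting the $a$-component. Since $\mC V_\mC A(a,a)=\mC V_\mC C(a,a)$ by fullness of $\mC A$ in $\mC C$, and since $\rho$ acts as evaluation at objects of $\mC A$, the two constructions agree on the nose, so $\rho_*$ corresponds to the identity on $\Hom_{\D(\mC V)}(Q, X(a))$ and is therefore an isomorphism. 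The main obstacle is this final compatibility check, but it is routine once the two instances of \eqref{tenshom} are placed side by side, with the only substantive input being the fullness of $\mC A$ in $\mC C$.
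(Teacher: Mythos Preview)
Your proposal is correct and follows essentially the same approach as the paper: both reduce each side via the isomorphism~\eqref{tenshom} to $\Hom_{\D(\mC V)}(Q,X(a))$ and then verify that $\rho_*$ corresponds to the identity, the key input being that the unit morphisms $e\to\mC V_{\mC C}(a,a)$ and $e\to\mC V_{\mC A}(a,a)$ agree by fullness of $\mC A$ in $\mC C$. The only difference is that the paper carries out the compatibility check explicitly on a roof $\alpha\circ q^{-1}$ representing a morphism in $\D[\mC C,\mC V]$, whereas you describe the check informally for ``a chain map $f$''; since the quasi-isomorphism part $q$ of the roof evaluates to $q_a=r(q)_a$ trivially, this is indeed the routine part, but you should be aware that derived morphisms are not just chain maps.
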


\begin{proof} 
The proof of this lemma reduces to verifying commutativity of the following diagram
\begin{diagram*}
{{\Hom_{\D[\mC C,\mC V]}}\bigl(  
    \mC V_\mC C(a,-)\oslash Q_j,X\bigr) }
    \arrow[d,"\cong"]\arrow[r,"\rho"] & {\Hom_{\D[\mC A, \mC V]}\bigl( \mC V_\mC
    A(a,-)\oslash Q_j, \rho X\bigr)}\arrow[d,"\cong"]\\ 
\Hom_{\D(\mC V)}\bigl(Q_j,X(a)\bigr)\arrow[r,"="]& \Hom_{\D(\mC
V)}\bigl(Q_j,\rho X(a)\bigr),
\end{diagram*} 
where the vertical arrows are the isomorphisms~\eqref{tenshom}.

Consider a map 
\begin{diagram*} 
\alpha \circ q^{-1}:&\mC V_\mC C(a,-)\oslash Q_j  \arrow[dr,"\alpha" swap] & &
\arrow[dl,"q"]\phantom{\mC V_\mC A(} X\phantom{a,-)\oslash} \\& &Y&                                
\end{diagram*}  
belonging to $\Hom_{\D[\mC C,\mC V]}\bigl(\mC V_\mC C(a,-)\oslash Q_j, X\bigr),$
where $\alpha$ is a chain map and $q$ is a quasi-isomorphism. If we apply $\rho$, 
then we have a diagram
\begin{diagram*} 
\rho(\alpha \circ q^{-1}):&\mC V_\mC A(a,-)\oslash Q_j  \arrow[dr,"r(\alpha)"
swap] & & \arrow[dl,"r(q)"]\phantom{\mC V_\mC A(} \rho X\phantom{a,-)\oslash}
\\& &\rho Y& \end{diagram*} 
We consider the image of this morphism across the
adjunction. We see that $\rho(\alpha\circ q^{-1})$ is taken to a map
\begin{diagram*}
Q_j\arrow[dr,"\overline{r(\alpha)}"
swap] & & \arrow[dl,"r(q)_a"]\rho X(a)\\ &\rho Y(a)&                                
\end{diagram*} 
in $\D(\mC V),$ where $\overline {r (\alpha)}$ is the image of $r (\alpha)$
under the isomorphism~\eqref{tenshom}. On the other hand, consider the image of
$\alpha\circ q^{-1}$ under the isomorphism~\eqref{tenshom}. It is is mapped to 
\begin{diagram*} Q_j\arrow[dr,"\overline\alpha" swap] & &
\arrow[dl,"q_a"{name=q}]X(a) &Q_j\arrow[dr,"\overline\alpha"{name=alp,swap}
] & & \arrow[dl,"r(q)_a"]\rho X(a)\\ & Y(a)& & & \rho Y(a)&
\arrow["="{font = \Large},phantom,description,from=q,to=alp]\end{diagram*} 
Clearly, $r(q)_a=q_a,$ and we draw the following diagram
\begin{diagram*}[row sep=huge]
 Q_j\arrow[d,"u_\mC C" swap]\arrow[r,"="]&Q_j\arrow[d,"u_\mC A"] \\
\underline\Hom_{\Ch(\mC V)} \bigl(\mC V_\mC C(a,-),\mC V_\mC C(a,-)\oslash
Q_j\bigr)\arrow[d,"{\underline\Hom_{\Ch(\mC V)}(\mC V_\mC
C(a,-),\alpha)}" description]\arrow[r,"r"]& \arrow[d,"{\underline\Hom_{\Ch(\mC
V)}(\mC V_\mC A(a,-),r\alpha)}" description]\underline\Hom_{\Ch(\mC V)}\bigl(\mC
V_\mC A(a,-),\mC V_\mC A(a,-)\oslash Q_j\bigr)\\
 \underline\Hom_{\Ch(\mC V)}\bigl(\mC V_\mC C(a,-),Y\bigr)\arrow[d]\arrow[r,"r"]&  
 \arrow[d]\underline\Hom_{\Ch(\mC V)}\bigl(\mC V_\mC A(a,-),\rho Y\bigr)\\
 Y(a) \arrow[r,"="]& \rho Y(a)                                
\end{diagram*}
where the left most path is $\overline\alpha$ and the right most path is
$\overline{r(\alpha)}.$  We identify the canonical morphism
$u_\mC C$ associated with the identity on $\mC V_\mC C(a,-)\oslash Q_j,$ and
similarly we identify $u_\mC A$ with the canonical morphism associated with the identity on 
and $\mC V_\mC A(a,-)\oslash Q_j.$ The first square of the diagram commutes as
the unit morphisms $e\to \mC V_\mC C(a,a)$ and $e\to \mC V_\mC
A(a,a)$ are equal. The second square is obviously
commutative. The third square commutes by the Enriched
Yoneda Lemma. We conclude that $\overline{r(\alpha)}$ equals
$\overline\alpha.$ Hence the lemma.
\end{proof}

\begin{lemma}\label{F}
We define the following triangulated functor $F:\mC
T_{\mC A}\hookrightarrow \D[\mC C,\mC V]$ $\to \D[\mC A, \mC V]$ which consists of
the inclusion $\tau_L$ of Lemma~\ref{comp} followed by
$\rho:\D[\mC C, \mC V]\to \D[\mC A, \mC V]$.
Then $F:\mC T_{\mC A}\to\D[\mC A, \mC V]$ is  an equivalence of categories.
\end{lemma}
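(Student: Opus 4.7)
The plan is to invoke Lemma~\ref{equ} with the triangulated functor $F : \mC T_{\mC A} \to \D[\mC A, \mC V]$ and the natural distinguished set of compact generators
\[
\Sigma := \bigl\{\mC V_\mC C(a,-)\oslash Q_j \mid a\in\mC A,\, j\in J\bigr\}\subset \mC T_\mC A.
\]
By construction $\mC T_\mC A$ is compactly generated by $\Sigma$. Since $\mC T_\mC A$ is a localizing subcategory of $\D[\mC C,\mC V]$, the inclusion $\tau_L$ commutes with direct sums, and the degreewise restriction functor $\rho$ manifestly commutes with direct sums as well (colimits in $[\mC C,\mC V]$ and $[\mC A,\mC V]$ are computed pointwise by Lemma~\ref{bicomplete}). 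Hence $F=\rho\circ\tau_L$ preserves direct sums.

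Next I would verify condition (1) of Lemma~\ref{equ}. Because $\mC A$ is a full $\mC V$-subcategory of $\mC C$, one has $\mC V_\mC C(a,b)=\mC V_\mC A(a,b)$ for $a,b\in\mC A$, so evaluating $\mC V_\mC C(a,-)\oslash Q_j$ on objects of $\mC A$ simply returns $\mC V_\mC A(a,-)\oslash Q_j$. Thus $F$ maps $\Sigma$ bijectively onto the distinguished family of compact generators of $\D[\mC A,\mC V]$ supplied by Lemma~\ref{rest}. Condition (2) of Lemma~\ref{equ} is exactly the isomorphism statement of Lemma~\ref{L6}: for all $a,a'\in\mC A$, $j,k\in J$ and $n\in\bB Z$, the map
\[
F : \Hom_{\D[\mC C,\mC V]}\bigl(\mC V_\mC C(a,-)\oslash Q_j,\, (\mC V_\mC C(a',-)\oslash Q_k)[n]\bigr)\to \Hom_{\D[\mC A,\mC V]}\bigl(\mC V_\mC A(a,-)\oslash Q_j,\, (\mC V_\mC A(a',-)\oslash Q_k)[n]\bigr)
\]
is an isomorphism, because Lemma~\ref{L6} identifies both sides with $\Hom_{\D(\mC V)}\bigl(Q_j, (\mC V_\mC A(a',a)\oslash Q_k)[n]\bigr)$ compatibly.

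With these three ingredients — preservation of sums, matching of compact generators, and isomorphism on Hom-groups between generators — Lemma~\ref{equ} delivers that $F$ is an equivalence of triangulated categories. The one subtle point I expect to write out carefully is the verification that $\tau_L(\Sigma)$ really consists of compact objects of $\mC T_\mC A$ that generate (this is where the characterization of $\mC T_\mC A$ via Neeman's theorem~\cite[2.1]{Nee96} is needed) and, conversely, that the images $F(\Sigma)$ are compact in $\D[\mC A,\mC V]$ rather than merely compact inside the essential image; the latter is immediate because the images coincide with the prescribed compact generators of $\D[\mC A,\mC V]$ from Lemma~\ref{rest}. Everything else reduces to bookkeeping already set up in Lemmas~\ref{rest},~\ref{perp},~\ref{comp},~\ref{equ} and~\ref{L6}.
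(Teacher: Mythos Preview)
Your proposal is correct and follows essentially the same route as the paper: both invoke Lemma~\ref{equ} by noting that $F$ sends the generating set $\{\mC V_\mC C(a,-)\oslash Q_j\}$ to the compact generators $\{\mC V_\mC A(a,-)\oslash Q_j\}$ of $\D[\mC A,\mC V]$ (via Lemma~\ref{rest}) and that the induced map on Hom-sets is an isomorphism by Lemma~\ref{L6}. If anything, you are slightly more explicit than the paper in verifying that $F$ preserves direct sums, a hypothesis of Lemma~\ref{equ} that the paper leaves implicit.
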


\begin{proof}
We use standard arguments regarding compactly generated triangulated categories.
By Lemma~\ref{rest} $\D[\mC A, \mC V]$ is compactly generated.

Following Lemma~\ref{equ} it is enough to show that $F$ sends compact
generators to compact generators and induces an isomorphism between their Hom-sets.
By definition, $\{\mC V_{\mC C}(a,-)\oslash Q_j\mid
a\in \mC A, j\in J\}$ is a family of compact generators of $\mC T_\mC A$. 
The functor $F$ maps $\mC V_{\mC 
C}(a,-)\oslash Q_j$ to $\mC V_{\mC A}(a,-)\oslash Q_j$, a compact generator
of $\D[\mC A, \mC V]$. Next, as $\tau_L$ is inclusion,
we need only verify whether the map
\footnotesize
\begin{diagram*}
{{\Hom_{\D[\mC C,\mC V]}}(  
    \mC V_\mC C(a,-)\oslash Q_j, \mC V_\mC C(b,-)\oslash Q_k)}
    \arrow[r,"\rho"] & {\Hom_{\D[\mC A, \mC V]}( \mC V_\mC
    A(a,-)\oslash Q_j,\mC V_\mC A(b,-)\oslash Q_k)}
\end{diagram*}
\normalsize is an isomorphism. This follows from Lemma~\ref{L6}.  We conclude
that $F$ is an equivalence by Lemma~\ref{equ}.
\end{proof}

The following result is an extension of the recollement of Theorem~\ref{Egg} to derived categories.

\begin{theorem}\label{TA}
Under the assumptions of Theorem~\ref{cond} there exists
a recollement of triangulated categories 
\begin{diagram*}[column sep=huge]
\phantom{X}\mC E_\mC A\arrow[r,"\iota"] &{\D[\mC C,\mC V]}
   \arrow[l, shift left, bend left, "\iota_R"] 
   \arrow[l, shift right, bend right,swap ,"\iota_L"]
   \arrow[r,"\rho"]			
& {\D[\mC A,\mC V]}\arrow[l, shift left, bend left, "\rho_R"] 
   \arrow[l, shift right, bend right,swap,"\rho_L"] 
\end{diagram*}
where $\mC A \subset \mC C$, $\mC E_\mC A := \{Y\in \mC D[\mC C, \mC V] \mid H_n(Y(a)) = 0
\text{ for all } a\in \mC A\text{ and }n \in \bB Z\}$, $\iota$ is the inclusion, $\rho$ is the
restriction. The functor $\rho_R$ takes $Y$ to $r_R(kY)$, where $kY$ is a  \K-injective resolution
of $Y$ in $\D[\mC A,\mC V]$ and $r_R$ is the functor of Theorem~\ref{Egg} which applies to $kY$ degreewise.
The functor $\rho_L:=\tau_L\circ F^{-1}$, where $\tau_L:\mC T_{\mC A}\to\D[\mC C,\mC V]$ is the inclusion and 
$F:\mC T_{\mC A}\to\D[\mC A,\mC V]$ is the equivalence of Lemma~\ref{F}.
\end{theorem}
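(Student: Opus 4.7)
The plan is to assemble the recollement from the three ingredients that have just been established. The preliminary recollement of Lemma~\ref{comp} already provides adjoint triples $\iota_L\dashv\iota\dashv\iota_R$ on the left and $\tau_L\dashv\tau\dashv\tau_R$ on the right, with $\iota$ and $\tau_L$ fully faithful. By Lemma~\ref{perp} its left-hand term is precisely $\mC E_\mC A$, so the inclusion $\iota:\mC E_\mC A\to\D[\mC C,\mC V]$ and its adjoints $\iota_L,\iota_R$ are already the functors required by the theorem. What remains is to replace the right-hand term $\mC T_\mC A$ by $\D[\mC A,\mC V]$ via the equivalence $F=\rho\circ\tau_L:\mC T_\mC A\to\D[\mC A,\mC V]$ of Lemma~\ref{F}.

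To do this I would fix a quasi-inverse $F^{-1}$ of $F$. Transporting the recollement along $F$ yields a diagram with middle functor $F\circ\tau$, left adjoint $\rho_L:=\tau_L\circ F^{-1}$ and right adjoint $\tau_R\circ F^{-1}$; the recollement axioms (adjointness, fully faithfulness of the outer functors, and $\Im\iota=\Ker(F\tau)$) transfer automatically because $F$ is an equivalence. The substantive step is to identify $F\circ\tau$ with the honest degreewise restriction $\rho$. For this I would apply the exact functor $\rho$ to the first triangle of Proposition~\ref{adjtri},
$$\tau_L\tau A\to A\to \iota\iota_L A\to (\tau_L\tau A)[1].$$
Since $\iota\iota_L A$ lies in $\mC E_\mC A$, its values at objects of $\mC A$ are acyclic, so $\rho(\iota\iota_L A)=0$ in $\D[\mC A,\mC V]$, and the triangle collapses to a natural isomorphism $\rho(\tau_L\tau A)\cong\rho A$. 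But $\rho\tau_L=F$ by definition, giving $F\tau A\cong\rho A$ naturally in $A$, as required.

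To finish, I would identify $\tau_R\circ F^{-1}$ with the formula $Y\mapsto r_R(kY)$ appearing in the statement. The proof of Lemma~\ref{rest} showed that $r_R$ preserves K-injective complexes, and the chain-level adjunction $r\dashv r_R$ is applied degreewise. Hence for $Y\in\D[\mC A,\mC V]$ and a K-injective resolution $kY$ one obtains natural isomorphisms
$$\Hom_{\D[\mC C,\mC V]}\bigl(X,r_R(kY)\bigr)\cong\K[\mC C,\mC V]\bigl(X,r_R(kY)\bigr)\cong\K[\mC A,\mC V]\bigl(\rho X,kY\bigr)\cong\Hom_{\D[\mC A,\mC V]}(\rho X,Y),$$
so $r_R(kY)$ realises the right adjoint of $\rho$ at $Y$, and uniqueness of adjoints yields $\tau_R F^{-1}Y\cong r_R(kY)$.

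The only genuine obstacle is the identification $F\tau\cong\rho$: once both the vanishing $\rho\iota=0$ and the triangle from Proposition~\ref{adjtri} are in hand this is immediate, but without that vanishing the transported recollement would not land on the intended restriction functor. Everything else reduces to formal adjunction bookkeeping and the standard computation of right derived functors via K-injective resolutions.
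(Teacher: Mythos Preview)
Your proposal is correct and follows essentially the same route as the paper: both arguments take the recollement of Lemma~\ref{comp} (with left-hand side identified via Lemma~\ref{perp}), use the triangle $\tau_L\tau A\to A\to\iota\iota_L A\to(\tau_L\tau A)[1]$ together with $\rho\iota=0$ to obtain $\rho\tau_L\tau\cong\rho$, and compute the right adjoint of $\rho$ as $r_R(k-)$ via the chain-level adjunction and the fact that $r_R$ preserves \K-injectives. The only cosmetic difference is packaging: you transport the whole recollement along $F$ and then identify the transported functors, whereas the paper defines $\rho_R$ and $\rho_L$ explicitly and verifies the recollement axioms for them directly (in particular checking fully faithfulness of $\rho_R$ via $rr_R\cong\id$ rather than by transport).
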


\begin{proof}
Using Lemma~\ref{perp}, we have the triangulated functors
$\iota_L, \iota_R$ as defined in Lemma~\ref{comp}. It is clear that $\mC E_\mC
A$ is the kernel of $\rho.$ The functor $\rho_R:\D[\mC A, \mC V]\to \D[\mC C, \mC V]$
is right adjoint to $\rho$ as 
$$\D[\mC C,
\mC V](X, r_RkY)\cong \K[\mC C,\mC V](X, r_RkY)$$
by the fact that  $r_R$ preserves \K-injectivity (see proof of Lemma
\ref{rest}) and $$\K[\mC C,
\mC V](X, r_RkY)\cong \K[\mC A,\mC V](rX, kY)\cong \D[\mC A,\mC V](\rho X, kY)\cong
\D[\mC A,\mC V](\rho X, Y).$$  It is
well known that the adjoint to a triangulated functor is also triangulated, see Lemma~\ref{triadj}, 
thus $\rho_R$ is a triangulated functor. Further,
fully faithfulness follows as $$\D[\mC C,
\mC V](r_RkX, r_RkY)\cong \D[\mC A,\mC V](rr_RkX, Y)$$ by adjunction and 
   $$\D[\mC A,\mC V](rr_RkX, Y)\cong \D[\mC A,\mC V](kX,Y)\cong \D[\mC A,\mC V](X,Y)$$ 
by the fact that $rr_R\To \id_{[\mC A,\mC V]}$ is an
isomorphism of functors. 

Now we claim that
$\rho_L$ is left adjoint to $\rho.$ By construction, $F=\rho\circ\tau_L$. 
By Lemma~\ref{comp} we have 
$$\D[\mC C,\mC
V](\tau_L\circ(\rho\circ\tau_L)^{-1}X,Y)\cong \D[\mC A,\mC V](X,
(\rho\circ\tau_L)\circ \tau Y),$$ 
natural in $X\in \D[\mC A,\mC V]$ and $Y\in
\D[\mC C,\mC V].$ 
By Proposition~\ref{adjtri} there is a distinguished triangle 
$\iota\iota_L Y[-1]\to \tau_L\tau Y\to  Y\to \iota\iota_L Y$.
One has $\iota_L Y\in \mC E_\mC A$. Applying $\rho$ to
the triangle, one gets a triangle $0\to\rho \tau_L\tau Y\to \rho Y\to 0.$ 
Thus $\rho\circ\tau_L\circ\tau
Y\cong \rho Y.$ We have the desired adjunction. Since $\rho_L$ is the composition of fully
faithful functors, it is itself fully faithful. This completes the construction of the recollement.
\end{proof}

The following result is an extension of the recollement of Theorem~\ref{Egg2} to derived categories.

\begin{theorem}\label{lambda}
Under the assumptions of Theorem~\ref{cond} there exists
a recollement of triangulated categories 
\begin{diagram*}[column sep=huge]
\mC E_\mC A\arrow[r,"\iota"] &{\D[\mC C,\mC V]}
   \bendR{\iota_R} 
     \arrow[r,"\lambda"]
	 \bendL{\iota_L}		
& {\D([\mC C,\mC V]/\mC S_\mC A)}
   \bendR{\lambda_R} 
   \bendL{\lambda_L}
\end{diagram*}
where $\mC A \subset \mC C$, $\iota_L,\iota,\iota_R$ are those of Theorem~\ref{TA}.
The functor $\lambda$ applies the $\mC S_\mC A$-localization exact functor to any complex of $\D[\mC C,\mC V]$.
The functor $\lambda_L:=\rho_L\circ\varkappa^{-1}$, where $\rho_L$ is from Theorem~\ref{TA}
and $\varkappa=\ell\circ r_L:[\mC A,\mC V]\xrightarrow{\cong}[\mC C,\mC V]/\mC S_\mC A$
is the equivalence~\eqref{kappa}. The functor $\lambda_R$ takes $Y$ to $\ell_R(kY)$, where 
$kY$ is a \K-injective resolution of $Y$ in $\D([\mC C,\mC V]/\mC S_\mC A)$ and $\ell_R$
is the inclusion functor $[\mC C,\mC V]/\mC S_\mC A\to[\mC C,\mC V]$ of Theorem~\ref{Egg2}
which applies to $kY$ degreewise.
\end{theorem}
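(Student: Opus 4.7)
The plan is to parallel the proof of Theorem~\ref{TA}, reusing the adjoint triple $\iota_L\dashv\iota\dashv\iota_R$ verbatim and constructing the second adjoint triple $\lambda_L\dashv\lambda\dashv\lambda_R$ by direct adjunction calculations. The engine of the argument is the exactness of the Gabriel localization functor $\ell:[\mC C,\mC V]\to[\mC C,\mC V]/\mC S_\mC A$, which guarantees that $\lambda$, defined degreewise, preserves quasi-isomorphisms and hence descends to derived categories, and which controls the identification of $\Ker\lambda$.

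First, I would check $\Ker\lambda=\mC E_\mC A$. Exactness of $\ell$ gives $H_n(\lambda Y)\cong\ell(H_n Y)$, so $\lambda Y\cong 0$ iff $H_n Y\in\mC S_\mC A$ for every $n$; combined with the pointwise exactness of evaluation ($H_n(Y)(a)=H_n(Y(a))$), this recovers precisely the defining condition of $\mC E_\mC A$. Next, I would establish $\lambda\dashv\lambda_R$. The key technical point is that $\ell_R$ preserves K-injectivity: if $X\in\Ch[\mC C,\mC V]$ is acyclic then so is $\ell X$ by exactness, and the ordinary chain-level adjunction forces $\K[\mC C,\mC V](X,\ell_R kY)\cong\K([\mC C,\mC V]/\mC S_\mC A)(\ell X,kY)=0$, so $\ell_R kY$ is K-injective. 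This lets me read off the chain of isomorphisms
\begin{align*}
\D[\mC C,\mC V](X,\ell_R kY)&\cong\K[\mC C,\mC V](X,\ell_R kY)\\
&\cong\K([\mC C,\mC V]/\mC S_\mC A)(\ell X,kY)\\
&\cong\D([\mC C,\mC V]/\mC S_\mC A)(\lambda X,Y),
\end{align*}
giving the desired adjunction; the right adjoint is then triangulated by the standard fact that adjoints of triangulated functors are triangulated.

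For $\lambda_L\dashv\lambda$, I would exploit that $\varkappa=\ell\circ r_L$ is an exact equivalence of Grothendieck categories, and therefore induces a triangulated equivalence $\bar\varkappa:\D[\mC A,\mC V]\to\D([\mC C,\mC V]/\mC S_\mC A)$ on derived categories. The natural isomorphism $\ell\circ r_L\circ r\cong\ell$ recorded in the proof of~\cite[5.3]{AlGG} gives $\varkappa\circ r\cong\ell$ at the abelian level; since both sides are exact, this descends to a natural isomorphism $\bar\varkappa\circ\rho\cong\lambda$. Composing the adjunctions $\bar\varkappa^{-1}\dashv\bar\varkappa$ and $\rho_L\dashv\rho$ from Theorem~\ref{TA} then furnishes $\lambda_L:=\rho_L\circ\varkappa^{-1}$ as the left adjoint of $\lambda$. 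Fully faithfulness of $\lambda_R$ follows from $\ell\ell_R\cong\id$: since $\lambda\lambda_R X=\ell\ell_R kX\cong kX\cong X$, the $\lambda\dashv\lambda_R$ adjunction yields $\D[\mC C,\mC V](\lambda_R X,\lambda_R Y)\cong\D([\mC C,\mC V]/\mC S_\mC A)(X,Y)$. Lemma~\ref{mm} then gives fully faithfulness of $\lambda_L$ for free.

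The main obstacle will be ensuring that the exact abelian equivalence $\varkappa$ really does upgrade to a triangulated equivalence that intertwines $\rho$ with $\lambda$ as claimed, together with verifying that $\ell_R$ preserves K-injectivity; these two points are where the passage from the abelian recollement of Theorem~\ref{Egg2} to the derived statement is not purely formal. Once those compatibilities are in hand, every remaining verification reduces to invoking a previously established adjunction or Lemma~\ref{mm}.
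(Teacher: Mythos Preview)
Your proposal is correct and follows essentially the same route as the paper: identify $\Ker\lambda=\mC E_\mC A$, verify $\lambda\dashv\lambda_R$ via the observation that $\ell_R$ preserves \K-injectives, and verify $\lambda_L\dashv\lambda$ via the isomorphism $\ell\circ r_L\circ r\cong\ell$. The only cosmetic differences are that the paper argues $\lambda_L$ is fully faithful first (as a composite of fully faithful functors) and then invokes Lemma~\ref{mm} for $\lambda_R$, whereas you do the reverse, and the paper derives the key isomorphism $\lambda\circ r_L\circ r\cong\lambda$ from the exact sequence of Proposition~\ref{adjseq} rather than citing~\cite[5.3]{AlGG} directly.
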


\begin{proof} 
We have that $\mC E_\mC A$ is the kernel of
$\lambda$ and $\lambda_L$ is plainly fully faithful. Further consider the natural isomorphisms 
\begin{align*}
\D[\mC C,\mC V](\lambda_LX,Y)&=\D[\mC C,\mC V](\rho_L\circ(\ell\circ
  r_L)^{-1}X,Y)\\&\cong\D([\mC C,\mC V]/\mC S_\mC A)(X,(\ell\circ
  r_L)\circ\rho Y)\\&\cong\D([\mC C,\mC V]/\mC S_\mC A)(X,(\ell\circ r_L
  \circ r)Y).
\end{align*}
By Proposition~\ref{adjseq} and Theorem~\ref{Egg} there is an exact sequence in $\Ch[\mC C,\mC V]$
   $$0\to i(A)\to r_L\circ rY\to Y \to i\circ i_L Y\to 0$$ 
for some $A\in\Ch\mC S_\mC A\subset\mC E_{\mC A}$.
Applying the $\mC S_\mC A$-localization functor $\lambda$ to this exact sequence,
we get that the morphism of functors $\lambda\circ r_L \circ r\to \lambda$ is pointwise an isomorphism
in $\Ch([\mC C,\mC V]/\mC S_{\mC A})$. Hence we can conclude that 
  $$\D([\mC C,\mC V]/\mC S_\mC A)(X,\ell\circ
  r_L\circ r Y)\cong\D([\mC C,\mC V]/\mC S_\mC A)(X,\lambda Y)$$
and $\lambda_L\dashv\lambda.$ We also have $\lambda\dashv\lambda_R$ by 
observing that $\ell_R$ preserves \K-injective resolutions and applying the same arguments 
when proving that $\rho\dashv\rho_R$ in Theorem~\ref{TA}. Now $\lambda_R$
is fully faithful by Lemma~\ref{mm}.
\end{proof}

\section{Recollements for triangulated categories and Serre localization}

After constructing a recollement of $\D[\mC C,\mC V]$ in Theorem~\ref{lambda}, it is important to extend it further
to a recollement of $\D\bigl([\mC C,\mC V]/\mC Q\bigr)$, where $\mC Q$ is a Serre localizing subcategory 
of $[\mC C,\mC V]$. We are motivated to investigate the categorical aspects of Voevodsky's
triangulated categories of motives $\mathbf{DM}^{eff}_{\mC A}(k)$, where $\mC A$ is a reasonable category of 
correspondences on smooth algebraic varieties (see Section~\ref{voev} below). However, such an
extension of the recollement of Theorem~\ref{lambda} to $\D\bigl([\mC C,\mC V]/\mC Q\bigr)$ is hardly possible
for any localizing subcategory $\mC Q$ of $[\mC C,\mC V]$, because certain adjoint functors 
do not seem to be constructible for general $\mC Q$. 

Therefore to make the desired extension of the recollement to $\D\bigl([\mC C,\mC V]/\mC Q\bigr)$ possible,
we need to find the right conditions on the localizing subcategories $\mC S_{\mC A}$ and $\mC Q$
of $[\mC C,\mC V]$. These conditions originate in the fundamental Voevodsky theorem~\cite{Voe1}, which says that
the Nisnevich sheaf $F_{nis}$ associated with a homotopy invariant presheaf with transfers $F$ is 
a homotopy invariant sheaf with transfers
and that it is strictly homotopy invariant whenever the base field is perfect. Translating this theorem into the language
of Serre and Bousfield localization theory in Grothendieck categories and their derived categories, we make the following

\begin{definition}\label{vprop}
Consider a Grothendieck category $\mC D,$ and any two Serre
localizing subcategories $\mC Q,\mC S\subset\mC D.$ We say that $\mC S$ \emph{satisfies
the Voevodsky property with respect to $\mC Q$\/} or just the $V$-\emph{property\/} (``$V$" for Voevodsky) if
the full subcategory in $\mC D$ of $\mC Q$-local objects $\mC S^{\mC Q}=\{S_{\mC Q}\mid S\in\mC S\}$ is a subcategory of $\mC S$.
In other words, the $\mC Q$-localization $S_{\mC Q}$ of any object $S\in\mC S$ is in $\mC S$.
\end{definition}

\begin{example}\label{pwt}
Let $\mC D$ be the Grothendieck category $PwT/k$ of Nisnevich pre\-sheaves with transfers over a field $k$,
$\mC S$ be the Serre localizing subcategory of homotopy invariant presheaves with transfers, and $\mC Q$
be the localizing subcategory of Nisnevich locally trivial presheaves with transfers. Then the Voevodsky
theorem~\cite{Voe1} implies that $\mC S$ satisfies the Voevodsky property with respect to $\mC Q$, because
the $\mC Q$-localization functor is nothing but the Nisnevich sheafification what follows from the following lemma.
\end{example}

\begin{lemma}\label{nissh}
The quotient category $(PwT/k)/\mC Q$ of $\mC Q$-local presheaves with transfers equals the category
$SwT/k$ of Nisnevich sheaves with transfers.
\end{lemma}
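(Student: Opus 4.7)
The plan is to verify the lemma by identifying the $\mC Q$-local objects of $PwT/k$ with Nisnevich sheaves with transfers, and then invoking Gabriel's theorem on Serre quotients of Grothendieck categories.

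First, I would unwind the definition of $\mC Q$: an object $F\in PwT/k$ lies in $\mC Q$ exactly when its underlying presheaf is Nisnevich locally trivial, i.e.\ when the Nisnevich sheafification $F_{nis}$ of $F$ (viewed as a presheaf of abelian groups) vanishes. The key input I would then invoke is the classical fact, due to Voevodsky, that the Nisnevich sheafification of a presheaf with transfers carries a canonical compatible transfer structure: the assignment $F\mapsto F_{nis}$ lifts to an exact functor $a:PwT/k\to SwT/k$ which is left adjoint to the fully faithful inclusion $i:SwT/k\hookrightarrow PwT/k$, and the adjunction unit $F\to iaF$ is a morphism in $PwT/k$.

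Second, I would identify $\mC Q$-local objects. By construction of the adjunction $a\dashv i$, the kernel of $a$ (as a functor of abelian categories) is exactly $\mC Q$, and $F\in PwT/k$ lies in the essential image of $i$ if and only if the unit $F\to iaF$ is an isomorphism, if and only if $F$ is itself a Nisnevich sheaf with transfers. I would then check that an object is $\mC Q$-local in the sense of Serre localization (no nonzero maps and no nonzero $\Ext^1$ from objects of $\mC Q$) if and only if it is isomorphic to an object of $SwT/k$; one direction is a direct computation using the long exact sequence of $\Ext$ applied to the canonical short exact sequence
\begin{equation*}
0\to \Ker(F\to iaF)\to F\to iaF\to \Coker(F\to iaF)\to 0,
\end{equation*}
whose outer terms lie in $\mC Q$, and the other direction follows from exactness of $a$ together with $a\circ i\simeq\id$.

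Third, I would conclude by applying the Gabriel--Popescu style equivalence: since $\mC Q\subset PwT/k$ is a localizing Serre subcategory of a Grothendieck category, the quotient functor $\ell:PwT/k\to (PwT/k)/\mC Q$ has a fully faithful right adjoint whose essential image is the subcategory of $\mC Q$-local objects, inducing an equivalence between $(PwT/k)/\mC Q$ and that subcategory. Combining this with the identification of $\mC Q$-local objects as $SwT/k$ above yields the desired equality $(PwT/k)/\mC Q = SwT/k$, with $\ell$ corresponding to the sheafification functor $a$.

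The only genuinely nontrivial step is the second one, and its content is really Voevodsky's theorem that Nisnevich sheafification preserves transfers; once that is in hand, everything else is formal from the general theory of Serre localization in Grothendieck categories.
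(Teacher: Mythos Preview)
Your proposal is correct and follows essentially the same route as the paper: both arguments take as key input Voevodsky's result that Nisnevich sheafification lifts to presheaves with transfers (so that $F\to F_{nis}$ is a morphism in $PwT/k$), and then identify the $\mC Q$-local objects with the Nisnevich sheaves with transfers. The paper does this identification more explicitly by hand---for the direction ``sheaf $\Rightarrow$ $\mC Q$-local'' it proves $\Ext^1_{PwT/k}(X,F_{nis})=0$ via the observation that for any extension $F_{nis}\hookrightarrow G\twoheadrightarrow X$ with $X\in\mC Q$ the composite $F_{nis}\hookrightarrow G\to G_{nis}$ is an isomorphism, hence a splitting; and for ``$\mC Q$-local $\Rightarrow$ sheaf'' it runs exactly your four-term sequence argument, first killing the kernel and then splitting off the cokernel inside the torsionfree object $F_{nis}$---whereas you package both directions in the language of the adjunction $a\dashv i$ and general Serre localization. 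Your phrase ``the other direction follows from exactness of $a$ together with $a\circ i\simeq\id$'' is a bit compressed (adjunction alone gives only $\Hom$-vanishing, and the $\Ext^1$-vanishing still needs the splitting argument or the fact that an exact reflector identifies the reflective subcategory with the Gabriel quotient), but once unpacked it is the same computation the paper performs.
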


\begin{proof}
Given $F\in PwT/k$ its Nisnevich sheaf $F_{nis}$ is in $PwT/k$ and the morphism
$F\to F_{nis}$ is a morphism in $PwT/k$ by~\cite{Voe1} (see~\cite[1.2]{SV1} as well).
Obviously, $F_{nis}$ is $\mC Q$-torsionfree. Let
   $$F_{nis}\hookrightarrow G\twoheadrightarrow X$$
be a short exact sequence in $PwT/k$ with $X\in\mC Q$. Since $X_{nis}=0$, it follows that
the composite map $F_{nis}\hookrightarrow G\to G_{nis}$ is an isomorphism in $PwT/k$,
hence the short exact sequence splits. We see that $\Ext^1_{PwT/k}(X,F_{nis})=0$,
and so $F_{nis}$ is $\mC Q$-local. Thus every Nisnevich sheaf with transfers is $\mC Q$-local.

Conversely, suppose $F\in PwT$ is $\mC Q$-local. Consider a long exact sequence
in the category $\mathbb ZPre(Sm/k)$ of ordinary presheaves of Abelian groups 
   $$A\hookrightarrow F\to F_{nis}\twoheadrightarrow X,$$
where $A,X$ are Nisnevich locally trivial. Since $F\to F_{nis}$ is a map of $PwT/k$,
it follows that $A,X\!\!\in\! PwT$ (limits/colimits in $PwT\!/k$ are computed in $\mathbb ZPre(Sm/k)$).
Therefore $A,X\in\mC Q$. But $F$ is $\mC Q$-local, and so $A=0$. We get a
short exact sequence $F\hookrightarrow F_{nis}\twoheadrightarrow X$ in $PwT/k$.
Since $F\in PwT$ is $\mC Q$-local and $X\in\mC Q$, this short exact sequence splits.
We have $F_{nis}=F\oplus X$. But $F_{nis}$ is $\mC Q$-torsionfree, so $X=0$ and $F=F_{nis}$.
Thus every $\mC Q$-local presheaf with transfers is a Nisnevich sheaf with transfers.
\end{proof}

Since we work with two localizing subcategories, we need the notion of their join.

\begin{definition}
Consider a Grothendieck category $\mC D,$ and any two Serre
localizing subcategories $\mC Q,\mC S\subset\mC D.$ We define the \emph{join\/} $\mC J:=\sqrt{(\mC Q\cup
\mC S )}$ of $\mC Q$ and $\mC S$ as the smallest localizing subcategory containing them.
\end{definition}

Below we shall need the following lemma proven in~\cite[Lemma 5]{Gar09}.

\begin{lemma}\label{L5}
Given two localizing subcategories $\mC Q,\mC S$ of a
Grothendieck category $\mC D$, the full subcategory $\mC S^{\mC Q}$ of objects of the
form $X_{\mC Q}$ with $X\in\mC S$ is closed under direct sums, subobjects, and
quotient objects in $\mC D/\mC Q.$ Moreover, let $\mC J$ be the unique localizing subcategory
of $\mC D$ containing $\mC Q$ such that the quotient Grothendieck category 
$\mC D/\mC J=(\mC D/\mC Q)/\sqrt{\mC S^{\mC Q}}$, where
$\sqrt{\mC S^{\mC Q}}$ is the smallest localizing category in $\mC D/\mC Q$ containing $\mC S^{\mC Q}$. Then
$\mC J=\sqrt{(\mC Q\cup\mC S)}$, the join of $\mC Q,\mC S$ in $\mC D$.
\end{lemma}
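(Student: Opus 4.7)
The plan is to split the proof into three stages, corresponding to the closure properties of $\mC S^{\mC Q}$, the existence and uniqueness of $\mC J$, and finally its identification with the join $\sqrt{(\mC Q\cup\mC S)}$. Throughout I will rely on the Gabriel correspondence between localizing subcategories of $\mC D$ containing $\mC Q$ and localizing subcategories of $\mC D/\mC Q$, realized by the exact localization functor $q:=(-)_{\mC Q}:\mC D\to\mC D/\mC Q$ and its fully faithful right adjoint (section functor).

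First I would check closure of $\mC S^{\mC Q}$ in $\mC D/\mC Q$. For direct sums: since $q$ is a left adjoint it preserves coproducts, and $\mC S$ is closed under direct sums in $\mC D$, so $\bigoplus(S_i)_{\mC Q}\cong(\bigoplus S_i)_{\mC Q}\in\mC S^{\mC Q}$. For subobjects: given $Y\hookrightarrow X_{\mC Q}$ with $X\in\mC S$, pull $Y$ back along the unit $X\to qsX_{\mC Q}$ to obtain a subobject $X'\hookrightarrow X$; because $\mC S$ is closed under subobjects we have $X'\in\mC S$, and exactness of $q$ combined with the standard identification of subobjects in a quotient category gives $(X')_{\mC Q}\cong Y$. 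For quotients: given an epimorphism $X_{\mC Q}\twoheadrightarrow Y$ with $X\in\mC S$, apply the subobjects case to its kernel to write $\Ker\cong(X')_{\mC Q}$ with $X'\hookrightarrow X$ in $\mC S$; then exactness of $q$ gives $Y\cong(X/X')_{\mC Q}\in\mC S^{\mC Q}$ because $\mC S$ is closed under quotients.

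Next I would construct $\mC J$. By Gabriel's correspondence, localizing subcategories of $\mC D/\mC Q$ pull back to localizing subcategories of $\mC D$ containing $\mC Q$, and this correspondence is a bijection. Define
\[
\mC J:=q^{-1}\bigl(\sqrt{\mC S^{\mC Q}}\bigr)=\{X\in\mC D\mid X_{\mC Q}\in\sqrt{\mC S^{\mC Q}}\};
\]
this is the unique localizing subcategory of $\mC D$ containing $\mC Q$ whose image in $\mC D/\mC Q$ is $\sqrt{\mC S^{\mC Q}}$. The identification $\mC D/\mC J=(\mC D/\mC Q)/\sqrt{\mC S^{\mC Q}}$ then follows from the universal property of iterated Serre localization: both categories are solutions to the same universal problem of inverting all morphisms whose kernel and cokernel lie in $\mC J$.

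Finally I would prove $\mC J=\sqrt{(\mC Q\cup\mC S)}$. One inclusion is immediate: $\mC Q\subset\mC J$ by construction, and every $S\in\mC S$ satisfies $S_{\mC Q}\in\mC S^{\mC Q}\subset\sqrt{\mC S^{\mC Q}}$, hence $\mC S\subset\mC J$; by minimality of the join, $\sqrt{(\mC Q\cup\mC S)}\subset\mC J$. For the reverse inclusion, apply the Gabriel correspondence to $\sqrt{(\mC Q\cup\mC S)}$ itself: it corresponds to some localizing subcategory $\mC L$ of $\mC D/\mC Q$, and since $\mC S\subset\sqrt{(\mC Q\cup\mC S)}$ we have $\mC S^{\mC Q}\subset\mC L$, so $\sqrt{\mC S^{\mC Q}}\subset\mC L$. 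Taking preimages under $q$ gives $\mC J\subset\sqrt{(\mC Q\cup\mC S)}$, completing the proof. The step I expect to require the most care is the subobject/quotient closure in Stage 1, because it hinges on the correct interplay between $q$ and its section functor, and on the fact that every subobject in the quotient category genuinely lifts to a subobject upstairs; the other two stages are then essentially formal consequences of Gabriel's correspondence.
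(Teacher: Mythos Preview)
The paper does not actually prove this lemma: it is quoted verbatim from \cite[Lemma~5]{Gar09} and only cited, so there is no proof in the paper to compare against. Your argument therefore supplies what the paper omits, and it is essentially correct. The three-stage structure is the natural one, and your use of the Gabriel correspondence between localizing subcategories of $\mC D$ containing $\mC Q$ and localizing subcategories of $\mC D/\mC Q$ makes Stages~2 and~3 purely formal, as you note.

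One small notational wrinkle in Stage~1: when you write ``pull $Y$ back along the unit $X\to qsX_{\mC Q}$'', the object $Y$ lives in $\mC D/\mC Q$, so what you really mean is to apply the section functor $s$ first to get $s(Y)\hookrightarrow s(X_{\mC Q})$ in $\mC D$, and then pull back along the unit $\eta_X:X\to s(q(X))=s(X_{\mC Q})$. (Also, ``$qsX_{\mC Q}$'' should read ``$s(X_{\mC Q})$''; as written it is an object of $\mC D/\mC Q$, not $\mC D$.) With that clarified, the argument goes through: exactness of $q$ preserves the pullback, and since $q(\eta_X)$ is an isomorphism and $qs\cong\id$, one gets $(X')_{\mC Q}\cong Y$ as claimed. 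The quotient case then follows cleanly from the subobject case as you wrote it.
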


\begin{lemma}\label{SAQ}
Let $\mC S,\mC Q$ be two localizing subcategories of a Grothendieck category $\mC D$. 
If $\mC S$ satisfies the Voevodsky property with respect to $\mC Q$, then
the subcategory $\mC S^\mC Q$ of Lemma~\ref{L5} is localizing in 
the quotient Grothendieck category $\mC D/\mC Q.$
\end{lemma}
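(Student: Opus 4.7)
The plan is to combine Lemma~\ref{L5} with the Voevodsky property to upgrade $\mC S^{\mC Q}$ from a subcategory of $\mC D/\mC Q$ closed under direct sums, subobjects, and quotient objects to a full localizing subcategory. Since a localizing subcategory of a Grothendieck category is precisely a Serre subcategory closed under direct sums, the only remaining item to verify is closure under extensions, and this is exactly where the V-property will enter.

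To carry this out, let $q:\mC D\to\mC D/\mC Q$ denote the $\mC Q$-localization functor and $s:\mC D/\mC Q\to\mC D$ its right adjoint section functor, which is exact on the left. Given a short exact sequence $0\to A\to B\to C\to 0$ in $\mC D/\mC Q$ with $A,C\in\mC S^{\mC Q}$, I would apply $s$ to obtain an exact sequence $0\to sA\to sB\to sC$ in $\mC D$. By the description of $\mC S^{\mC Q}$ in Lemma~\ref{L5}, one has $A=q(X)$ and $C=q(Z)$ for some $X,Z\in\mC S$, so that $sA=X_{\mC Q}$ and $sC=Z_{\mC Q}$; the Voevodsky property then places both $sA$ and $sC$ inside $\mC S$. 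The image of $sB\to sC$ is a subobject of $sC\in\mC S$, hence lies in $\mC S$ by Serre closure, and $sB$ fits into a short exact sequence whose outer terms $sA$ and $\Im(sB\to sC)$ both lie in $\mC S$. Extension-closure of the Serre subcategory $\mC S$ of $\mC D$ then yields $sB\in\mC S$, and consequently $B\cong q(sB)\in q(\mC S)\subseteq\mC S^{\mC Q}$.

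The argument is short once the correct reformulation is found; the main conceptual step, rather than a genuine obstacle, is to lift the extension problem from the quotient $\mC D/\mC Q$ back to the ambient category $\mC D$ via the section functor $s$. The V-property is precisely the hypothesis that makes this lifting succeed, since it guarantees that $sA$ and $sC$ remain inside $\mC S$ so that the Serre-closure properties of $\mC S$ in $\mC D$ can be invoked. Without it, one would only know $sA,sC\in\mC D$ and would have no mechanism for forcing $sB$ into $\mC S$, which explains why the V-property is the natural hypothesis under which the conclusion holds.
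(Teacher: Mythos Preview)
Your proof is correct and follows essentially the same route as the paper's. Both arguments use Lemma~\ref{L5} for closure under direct sums, subobjects and quotients, and then verify extension-closure by viewing the short exact sequence in $\mC D/\mC Q$ back in $\mC D$ via the section functor, observing that the outer terms lie in $\mC S$ by the V-property, replacing the right-hand term by the image (a subobject of something in $\mC S$), and invoking Serre closure of $\mC S$ in $\mC D$; the paper simply phrases this by identifying $\mC D/\mC Q$ with the $\mC Q$-local objects rather than writing the section functor $s$ explicitly.
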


\begin{proof}
By definition, $\mC S^\mC Q\subset \sqrt{\mC S^\mC Q}.$ Further,
$\mC S^\mC Q$ is closed under direct sums, subobjects and quotients 
in $\mC D/\mC Q$ by Lemma~\ref{L5}. Consider an exact sequence 
$X\hookrightarrow Y \overset{g}{\twoheadrightarrow}Z$ with $X,Z\in \mC S^\mC Q.$ 
By assumption, $\mC S^\mC
Q\subset \mC S$ and so $X,Z\in \mC S$. Moreover,  the image of $g$ in $\mC D$ 
also belongs to $ \mC S$ as $\Im(g)\subset Z$. Thus there is a
short exact sequence $X\hookrightarrow Y \overset{g}{\twoheadrightarrow }\Im(g)$
in $\mC D$ with $X,\Im(g)\in \mC S$. This implies that $Y\in\mC
S$, hence  $Y=Y_\mC Q\in\mC S^\mC Q$ and we have that $\mC S^\mC Q$ 
is closed under extensions. We conclude that $\mC S^\mC Q$ is localizing in $\mC D/\mC Q.$
\end{proof}

\begin{definition}\label{vsprop}
Under the assumptions of the preceding lemma denote by
$\mC E:=\{Y\in\D(\mC D)\mid H^*(Y)\in\mC S\}$ and 
$\mC E^{\mC Q}:=\{Z\in\D(\mC D/\mC Q)\mid H^*_{\mC D/\mC Q}(Z)\in\mC S^{\mC Q}\}$,
where $H^*_{\mC D/\mC Q}$ refers to cohomology computed in the quotient category
${\mC D/\mC Q}$ (note that $\mC S^{\mC Q}$ is localizing in $\mC D/\mC Q$ by the preceding lemma). We have
a canonical triangulated functor between derived categories
   $$k:\D(\mC D/\mC Q)\to\D(\mC D)$$
taking a complex $Z\in\D(\mC D/\mC Q)$ to its \K-injective resolution $kY$ in $\D(\mC D/\mC Q)$
and regarding $kY$ as a complex of $\D(\mC D)$. Notice that $k$ preserves \K-injective objects.

Consider a Grothendieck category $\mC D$ and any two Serre
localizing subcategories $\mC Q,\mC S\subset\mC D.$ We say that 
$\mC S$ \emph{satisfies the strict Voevodsky property with respect to $\mC Q$\/} or just the \emph{strict $V$-property\/}
if $\mC S$ satisfies the Voevodsky property with respect to $\mC Q$ and the restriction of
$k:\D(\mC D/\mC Q)\to\D(\mC D)$ to $\mC E^{\mC Q}$ lands in $\mC E$. In other words, the $\mC D$-homology objects
of the \K-injective resolutions of the complexes from $\mC E^{\mC Q}$ belong to $\mC S$.  
\end{definition}

\begin{example}\label{swt}
Under the notation of Example~\ref{pwt} $\mC D/\mC Q$ is the derived category $\D(SwT/k)$
of the Grothendieck category $SwT/k$ of Nisnevich sheaves with transfers by Lemma~\ref{nissh}. If the base field
$k$ is perfect, the Voevodsky theorem~\cite{Voe1} and~\cite[6.2.7]{Mor} imply that $\mC S$ satisfies the 
strict Voevodsky property with respect to $\mC Q$.
\end{example}
 
\begin{proposition}\label{join}
Let $\mC J$ be the join of two localizing subcategories $\mC Q,\mC S$ of a Grothendieck category $\mC D$.
If we consider quotient categories as full
subcategories of local objects then the following relation is true: $\mC D/\mC J=(\mC D/\mC S)\cap(\mC D/\mC Q).$ In
other words, $\mC D/\mC J$ is the full subcategory of those objects that are both $\mC Q$- and $\mC S$-local.
\end{proposition}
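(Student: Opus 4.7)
The plan is to identify $\mC D/\mC J$ with $(\mC D/\mC S)\cap(\mC D/\mC Q)$ by proving that an object $X\in\mC D$ is $\mC J$-local if and only if it is both $\mC Q$-local and $\mC S$-local; recall that ``$\mC T$-local'' here means $\Hom(Y,X)=\Ext^1(Y,X)=0$ for every $Y\in\mC T$. The forward implication is immediate because $\mC Q\cup\mC S\subset\mC J$.

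For the converse, assume $X$ is both $\mC Q$- and $\mC S$-local and fix $J\in\mC J$; I need $\Hom(J,X)=\Ext^1(J,X)=0$. The key input is the standard description of the localizing hull: since $\mC Q$ and $\mC S$ are each Serre, their union is closed under subobjects and quotients, and $\mC J=\sqrt{\mC Q\cup\mC S}$ then coincides with the class of objects admitting a continuous transfinite filtration $0=J_0\subset J_1\subset\cdots\subset J_\alpha=J$ with $J_{\beta+1}/J_\beta\in\mC Q\cup\mC S$ and $J_\beta=\bigcup_{\gamma<\beta}J_\gamma$ at limit ordinals. I then prove by transfinite induction on $\beta$ that $\Hom(J_\beta,X)=\Ext^1(J_\beta,X)=0$: the base case is trivial, and at a successor $\beta=\gamma+1$ the long exact sequence associated to $0\to J_\gamma\to J_{\gamma+1}\to J_{\gamma+1}/J_\gamma\to 0$, combined with the fact that $\Hom(-,X)$ and $\Ext^1(-,X)$ vanish on $\mC Q\cup\mC S$ by hypothesis, forces both groups to vanish for $J_{\gamma+1}$.

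The crux is the limit ordinal step. For a limit $\beta$ the filtered colimit formula gives $\Hom(J_\beta,X)=\lim_\gamma\Hom(J_\gamma,X)=0$ automatically, while the $\Ext^1$ vanishing follows from Eklof's lemma in Grothendieck categories: the class $\{Y\mid\Ext^1(Y,X)=0\}$ is closed under continuous transfinite extensions of this shape. Equivalently one deduces the vanishing from the exact sequence $0\to\lim{}^1\Hom(J_\gamma,X)\to\Ext^1(J_\beta,X)\to\lim_\gamma\Ext^1(J_\gamma,X)$, noting that $\lim^1$ vanishes (the system is trivially Mittag--Leffler, since each $\Hom(J_\gamma,X)$ is already zero) and the right-hand term is zero by the inductive hypothesis. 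This concludes the induction, so $\Hom(J,X)=\Ext^1(J,X)=0$ for every $J\in\mC J$ and $X$ is $\mC J$-local. The main obstacle is precisely this limit step; everything else reduces to routine long-exact-sequence bookkeeping together with the standard filtration description of the localizing hull.
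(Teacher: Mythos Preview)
Your argument is correct and genuinely different from the paper's. You work entirely in $\mC D$: you describe $\mC J$ as the class of objects admitting a continuous transfinite filtration with successive quotients in $\mC Q\cup\mC S$ (which is valid since $\mC Q\cup\mC S$ is closed under subobjects and quotients, so the filtered class is itself localizing), and then invoke Eklof's lemma to conclude $\Ext^1(J,X)=0$. The paper instead passes to the quotient $\mC D/\mC S$: using Lemma~\ref{L5} it identifies $\mC D/\mC J$ with $(\mC D/\mC S)/\sqrt{\mC Q^{\mC S}}$, and then appeals to \cite[Corollary~3]{Gar09} to reduce $\sqrt{\mC Q^{\mC S}}$-locality to the vanishing of $\Hom$ and $\Ext^1$ against objects of the form $W_{\mC S}$ with $W\in\mC Q$. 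The $\Ext^1$ vanishing is then obtained by an explicit splitting argument: one pulls a short exact sequence in $\mC D/\mC S$ back to $\mC D$, splits it there using $\mC Q$-locality of $X$, and pushes the splitting forward via the $\mC S$-localization.

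Your route is more self-contained and avoids both the iterated-quotient description of the join and the external reference, at the cost of importing Eklof's lemma and the transfinite-filtration description of $\mC J$. The paper's route integrates better with the surrounding machinery (Lemma~\ref{L5} is used elsewhere) and makes the interaction between the two localizations more explicit. One small caution: your alternative $\lim^1$ justification at limit ordinals is only literally the Milnor sequence for countable cofinality; for general ordinals you should rely on Eklof's lemma directly, as you do in your primary argument.
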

 
\begin{proof}
Since $\mC D/\mC J=(\mC D/\mC S)/\sqrt{\mC Q^{\mC S}}$ by Lemma~\ref{L5}, this implies that
every object in $\mC D/\mC J$ is $\mC S$-local. In the same manner, every object in 
$\mC D/\mC J$ is $\mC Q$-local and hence $\mC D/\mC J\subset(\mC D/\mC S)\cap(\mC D/\mC Q).$ 
Next, suppose $X\in(\mC D/\mC S)\cap(\mC D/\mC Q)$. It is enough to show that 
$X$ is $\sqrt{\mC Q_{\mC S}}$-local in $\mC D/\mC S.$ 
By Lemma~\ref{L5} $\mC Q_{\mC S}$ is closed under subobjects, quotients and direct sums.
By~\cite[Corollary 3]{Gar09} $X$ is $\sqrt{\mC Q_{\mC S}}$-local in $\mC D/\mC S$
if and only if $\Hom_{\mC D/\mC S}(W_{\mC S},X)=0=\Ext^1_{\mC D/\mC S}(W_{\mC S},X)$ for all
$W_{\mC S}\in\mC Q_{\mC S},$ where $W_{\mC S}\in\mC Q_{\mC S}$ is the $\mC S$-localization of some $W\in\mC Q.$
We have $\Hom_{\mC D/\mC S}(W_{\mC S},X)=\Hom_\mC D(W,X)=0,$ as $X$ is $\mC Q$-local. Now, for
 $\Ext^1_{\mC D/\mC S}(W_{\mC S},X)=0,$ we show that every short exact sequence splits. 
 
Suppose we have
a short exact sequence ${X\hookrightarrow M \twoheadrightarrow
W_{\mC S}}$ in $\mC D/\mC S.$ If we consider the canonical morphism $\lambda$ associated with
$\mC S$-localization, then there is the following diagram in \mC D
\begin{diagram*}
X \arrow[r,hookrightarrow,"i"]\arrow[d,"="]& N
\arrow[r,"\alpha"]\arrow[d,"\lambda_N"] & W\arrow[d, "\lambda_W"]\\
X  \arrow[r,hookrightarrow,"i'"]& M \arrow[r] & W_{\mC S}.
\end{diagram*}
As $\mC Q$ is localizing, if we take the image of $\alpha$ in $W$ then $\Im
\alpha\in\mC Q$. Since $X$ is $\mC Q$-local then $\Ext^1_{\mC D}(\Im\alpha,X)=0$.
Thus the short exact sequence ${X\hookrightarrow N
\twoheadrightarrow\Im\alpha}$ splits in $\mC D$. Therefore there
exists some map $\beta:N\to X$ with $\beta\circ i=1_X$, and since $X$ is $\mC
S$-local and $M=N_{\mC S}$, we have that $\beta$ factors uniquely through $\lambda_N.$ Denote this factor
$\beta':M\to X$ with $\beta'\circ \lambda_N=\beta.$ We see that $\beta'$ splits
the bottom short exact sequence in $\mC D/\mC S$ as $1_X=\beta i=(\beta' \lambda_N)i=\beta' i'.$
Hence $X$ is $\sqrt{\mC Q_{\mC S}}$-local. It follows that $X$ is $\mC J$-local and $(\mC
D/\mC S)\cap(\mC D/\mC Q)\subset \mC D/\mC J.$ We conclude that $\mC
 D/\mC J=(\mC D/\mC S)\cap(\mC D/\mC Q).$
\end{proof}

We are now in a position to formulate the main result of this section.

\begin{theorem}\label{bigthm}
Under the assumptions of Theorem~\ref{cond} suppose $\mC A\subset\mC C$ and $\mC Q\subset [\mC C,\mC V]$ 
is a localizing subcategory such that: 
\begin{itemize}
  \item $\D\big([\mC C, \mC V]/\mC Q \big)$ is
  compactly generated and the functor $\D[\mC C,\mC V]\to \D\big([\mC C,\mC V]/\mC
  Q\big)$ induced by the exact $\mC Q$-localization functor $(\cdot)_{\mC Q}:[\mC C,\mC V]\to[\mC C,\mC V]/\mC
  Q$ respects compact objects;
  \item the localizing subcategory $\mC S_\mC A=\bigl\{Y\in[\mC C,\mC V]\mid Y(a)=0\textrm{ for all $a\in\mC A$}\bigr\}$ 
  satisfies the strict Voevodsky property with respect to $\mC Q$.
\end{itemize}
Then there exists a recollement of triangulated categories \begin{diagram*}[column sep=huge]
\mC E_\mC A^\mC Q\arrow[r,"\iota^\mC Q"] &{\D\bigl([\mC C,\mC V]/\mC Q\bigr)}
   \bendR{\iota_R^\mC Q} 
   \bendL{\iota_L^\mC Q}
   \arrow[r,"\lambda^\mC Q"]			
& {\D\bigl([\mC C,\mC V]/\mC J_\mC A\bigr),}
   \bendR{\lambda_R^\mC Q} 
   \bendL{\lambda_L^\mC Q}
\end{diagram*}
where $\mC E_\mC A^\mC Q$ is as in Definition~\ref{vsprop},
$\mC J_\mC A$ is the join of $\mC Q$ and $\mC S_\mC A$ in $[\mC C,\mC V]$, the
functor $\lambda^{\mC Q}$ is induced by the $\mC S_\mC A^\mC Q$-localization functor 
$[\mC C,\mC V]/\mC Q\to[\mC C,\mC V]/\mC J_\mC A$ associated with the localizing subcategory
$\mC S_\mC A^\mC Q=\{X_{\mC Q}\mid X\in\mC S_{\mC A}\}$ of Lemmas~\ref{L5}-\ref{SAQ}, $\iota^{\mC Q}$ is inclusion,
and $\lambda_R^\mC Q$ is induced by the \K-injective resolution functor.
\end{theorem}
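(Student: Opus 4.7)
The plan is to adapt the proof of Theorem~\ref{lambda} to the compactly generated category $\D\bigl([\mC C,\mC V]/\mC Q\bigr)$ in place of $\D[\mC C,\mC V]$, exploiting the hypothesis that the $\mC Q$-localization functor $\lambda_\mC Q:\D[\mC C,\mC V]\to\D\bigl([\mC C,\mC V]/\mC Q\bigr)$ respects compact objects. Observe first that, by Lemmas~\ref{L5}--\ref{SAQ}, the full subcategory $\mC S_\mC A^\mC Q$ is localizing in $[\mC C,\mC V]/\mC Q$ with quotient equal to $[\mC C,\mC V]/\mC J_\mC A$, so the exact functor $(-)_{\mC S_\mC A^\mC Q}$ induces the triangulated functor $\lambda^\mC Q$ in the statement.

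I would then follow the strategy of Section~4, building the ``left half'' of the recollement via the Neeman--Krause machinery. Set $\mC T_\mC A^\mC Q$ to be the smallest localizing subcategory of $\D\bigl([\mC C,\mC V]/\mC Q\bigr)$ generated by the compact objects $\bigl\{\lambda_\mC Q(\mC V_\mC C(a,-)\oslash Q_j)\mid a\in\mC A,\,j\in J\bigr\}$; Lemma~\ref{comp} then furnishes a recollement $(\mC T_\mC A^\mC Q)^\perp\to\D([\mC C,\mC V]/\mC Q)\to\mC T_\mC A^\mC Q$ which supplies $\iota^\mC Q$, $\iota_L^\mC Q$ and $\iota_R^\mC Q$. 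The identification $(\mC T_\mC A^\mC Q)^\perp=\mC E_\mC A^\mC Q$ proceeds as in Lemma~\ref{perp}: applying the adjunction between $\lambda_\mC Q$ and the \K-injective resolution functor $k$, together with isomorphism~\eqref{tenshom}, an object $Y$ lies in $(\mC T_\mC A^\mC Q)^\perp$ iff $H^*(kY)\in\mC S_\mC A$, i.e.\ iff $kY\in\mC E_\mC A$. One inclusion now comes for free from exactness of $\mC Q$-localization ($H^*_{\mC D/\mC Q}(Y)=(H^*(kY))_\mC Q\in\mC S_\mC A^\mC Q$), while the reverse inclusion is \emph{precisely} the content of the strict Voevodsky property of Definition~\ref{vsprop}, which asserts that $k$ sends $\mC E_\mC A^\mC Q$ into $\mC E_\mC A$.

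The delicate step --- and what I expect to be the main obstacle --- is to identify $\mC T_\mC A^\mC Q$ with $\D\bigl([\mC C,\mC V]/\mC J_\mC A\bigr)$ via the functor $F:=\lambda^\mC Q\circ\tau_L^\mC Q$, in analogy with Lemma~\ref{F}. The required ingredients for Lemma~\ref{equ} are: (a) $\D\bigl([\mC C,\mC V]/\mC J_\mC A\bigr)$ is compactly generated by the $F$-images of the generators of $\mC T_\mC A^\mC Q$; (b) $F$ induces isomorphisms between the corresponding Hom-groups. For (a), one combines the fact that $(-)_{\mC J_\mC A}=(-)_{\mC S_\mC A^\mC Q}\circ(-)_\mC Q$ is a composition of exact localizations preserving compacts with a perpendicularity argument to check that the resulting images generate; for (b), a diagram chase analogous to Lemma~\ref{L6} reduces the Hom-identification to the already established isomorphism in $\D(\mC V)$, the crucial point being that the unit morphisms $e\to\mC V_\mC C(a,a)$ behave compatibly under both localizations.

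Once the equivalence $F$ is in hand, the right-hand adjoints are constructed exactly as in Theorem~\ref{lambda}: set $\lambda_L^\mC Q:=\tau_L^\mC Q\circ F^{-1}$, and $\lambda_R^\mC Q(Y):=\ell_R^\mC Q(kY)$, where $\ell_R^\mC Q$ is the canonical inclusion $[\mC C,\mC V]/\mC J_\mC A\hookrightarrow[\mC C,\mC V]/\mC Q$ (applied degreewise) and $kY$ is a \K-injective resolution in $\D\bigl([\mC C,\mC V]/\mC J_\mC A\bigr)$; the functor $\ell_R^\mC Q$ preserves \K-injectivity because its left adjoint is exact. Full faithfulness of $\lambda_L^\mC Q$ and $\lambda_R^\mC Q$ then follows from Lemma~\ref{mm}, completing the recollement.
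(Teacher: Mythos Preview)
Your overall architecture matches the paper's: define $\mC T_\mC A^\mC Q$, identify $(\mC T_\mC A^\mC Q)^\perp=\mC E_\mC A^\mC Q$ via the strict Voevodsky property (this is the paper's Lemma~\ref{TAQ}), produce an equivalence $F^\mC Q:=\lambda^\mC Q\circ\tau_L^\mC Q:\mC T_\mC A^\mC Q\to\D\bigl([\mC C,\mC V]/\mC J_\mC A\bigr)$, and then set $\lambda_L^\mC Q:=\tau_L^\mC Q\circ(F^\mC Q)^{-1}$. The difference lies in how the equivalence $F^\mC Q$ is established, and here your sketch has a real gap.

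You propose to invoke Lemma~\ref{equ}, which requires as input that $\D\bigl([\mC C,\mC V]/\mC J_\mC A\bigr)$ is compactly generated, with the $F^\mC Q$-images of the generators of $\mC T_\mC A^\mC Q$ as compact generators. Neither fact is among the hypotheses. Your justification for (a) asserts that ``$(-)_{\mC J_\mC A}=(-)_{\mC S_\mC A^\mC Q}\circ(-)_\mC Q$ is a composition of exact localizations preserving compacts'', but only $(-)_\mC Q$ is assumed to preserve compacts at the derived level; nothing is said about $(-)_{\mC S_\mC A^\mC Q}$, and compactness of the objects $\bigl(\mC V_\mC C(a,-)\oslash Q_j\bigr)_{\mC J_\mC A}$ in $\D\bigl([\mC C,\mC V]/\mC J_\mC A\bigr)$ is not a priori clear (it would follow if $\lambda_R^\mC Q$ preserved coproducts, which you have not shown). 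The paper sidesteps this entirely: it proves full faithfulness of $F^\mC Q$ first (Corollary~\ref{FQ1} and the corollary following it), and then proves essential surjectivity \emph{directly} in Lemma~\ref{equiv} by taking any $Y\in\D\bigl([\mC C,\mC V]/\mC J_\mC A\bigr)$, regarding it as an object of $\D\bigl([\mC C,\mC V]/\mC Q\bigr)$, and using the recollement triangle $\tau_L^\mC Q\tau^\mC Q Y\to Y\to\iota^\mC Q\iota_L^\mC Q Y$ to conclude $F^\mC Q(\tau^\mC Q Y)\cong\lambda^\mC Q Y=Y$. No compact generation of the target is needed.

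Your sketch for (b) also points to the wrong model. Lemma~\ref{L6} is a diagram chase about the \emph{restriction} functor $\rho$ and the compatibility of unit morphisms $e\to\mC V_\mC C(a,a)$; that mechanism does not transfer to the localization $\lambda^\mC Q$. The paper instead proves Lemma~\ref{kkk}: for $X\in\D\bigl([\mC C,\mC V]/\mC Q\bigr)$ and $Y=\lambda^\mC Q(X)$, the cone $Z$ of $X\to Y$ lies in $\mC E_\mC A^\mC Q$, so by the strict Voevodsky property $kZ\in\mC E_\mC A$, whence $kX(a)\to k'Y(a)$ is a quasi-isomorphism for every $a\in\mC A$. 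Combined with~\eqref{tenshom} this yields the Hom-isomorphism (Corollary~\ref{FQ1}) for compact generators against \emph{arbitrary} targets, which is what gives full faithfulness. This is a second, essential, use of the strict Voevodsky hypothesis beyond the identification $(\mC T_\mC A^\mC Q)^\perp=\mC E_\mC A^\mC Q$, and your proposal does not account for it.
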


\begin{remark}\label{remdm}
In our major application (see Section~\ref{voev}) the middle category will be the derived 
category of Nisnevich sheaves with reasonable transfers $\mC B$ on smooth algebraic varieties
$Sm/k$, the left category is Voevodsky's~\cite{Voe2} triangulated category of 
motives $\mathbf{DM}^{eff}_{\mC B}(k)$. The celebrated Voevodsky theorem~\cite[3.2.6]{Voe2} computes
the functor $\iota_L^{\mC Q}$ for this example as the Suslin complex $C_*$. Theorem~\ref{bigthm}
is a kind of the categorical framework for the triangulated category of motives $\mathbf{DM}^{eff}_{\mC B}(k)$.
\end{remark}

We postpone the proof of Theorem~\ref{bigthm}. First we establish a number of facts.
Note that $[\mC C,\mC V]/\mC J_\mC A\cong \bigl([\mC C,\mC V]/\mC Q\bigr)/\mC S_\mC A^\mC Q$ by 
Lemma~\ref{L5}. By Proposition~\ref{join} $[\mC C,\mC V]/\mC J_\mC A$ consists of those
objects of $[\mC C,\mC V]$ which are both $\mC Q$- and $\mC S_{\mC A}$-local. 
Recall from Theorem~\ref{cond} that $\D[\mC C,\mC V]$ is compactly generated with
compact generators $\bigl\{\mC V_\mC C(c,-)\oslash Q_j\bigr\}$.

\begin{lemma}\label{TAQ}
Under the assumptions of Theorem~\ref{bigthm} denote by $\mC T_\mC A^\mC Q$ 
the full subcategory of $\D\bigl([\mC C,\mC V]/\mC Q\bigr)$
generated by the compact objects $\Big\{\big(\mC V_\mC C(a,-)\oslash
Q_j\big)_\mC Q\mid a\in\mC A\Big\}$. Then there is a recollement of triangulated categories
 \begin{diagram*}[column sep=huge]
\mC E_\mC A^\mC Q\arrow[r,"\iota^\mC Q"] &{\D([\mC C,\mC
V]/\mC Q)}
   \bendR{\iota_R^\mC Q} 
   \bendL{\iota_L^\mC Q}
   \arrow[r,"\tau^\mC Q"]			
& {\mC T_\mC A^\mC Q}
	\bendR{\tau_R^\mC Q} 
    \bendL{\tau_L^\mC Q}
\end{diagram*}
where $\iota^\mC Q,\tau_L^\mC Q$ are inclusions.
\end{lemma}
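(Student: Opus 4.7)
The plan is to follow the strategy of Lemma~\ref{comp} and invoke Krause's recollement~\cite[5.6.1]{Krause2} for a compactly generated triangulated category together with a localizing subcategory generated by a subcollection of compact generators, and then identify the left orthogonal that arises with $\mC E_\mC A^\mC Q$. First I would verify the compact generation hypothesis. By Theorem~\ref{cond} the objects $\mC V_\mC C(c,-)\oslash Q_j$ form a set of compact generators of $\D[\mC C,\mC V]$. Since the functor $\D[\mC C,\mC V]\to\D\bigl([\mC C,\mC V]/\mC Q\bigr)$ induced by the exact $\mC Q$-localization is assumed to respect compact objects, the images $(\mC V_\mC C(c,-)\oslash Q_j)_\mC Q$ are compact in $\D\bigl([\mC C,\mC V]/\mC Q\bigr)$, and they generate the latter category because the right adjoint $k$ of Definition~\ref{vsprop} is fully faithful. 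Thus $\mC T_\mC A^\mC Q$ is the localizing subcategory generated by the subcollection indexed by $a\in\mC A$ of a set of compact generators of $\D\bigl([\mC C,\mC V]/\mC Q\bigr)$, and Krause's theorem produces a recollement with middle term $\D\bigl([\mC C,\mC V]/\mC Q\bigr)$, right-hand term $\mC T_\mC A^\mC Q$, and left-hand term $(\mC T_\mC A^\mC Q)^\perp$, in which $\iota^\mC Q$ and $\tau_L^\mC Q$ are inclusions.

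Next I would identify $(\mC T_\mC A^\mC Q)^\perp$ with $\mC E_\mC A^\mC Q$. The adjunction between the $\mC Q$-localization functor and $k$ combined with the isomorphism~\eqref{tenshom} yields, for every $a\in\mC A$, $j\in J$, $n\in\bB Z$ and $Z\in\D\bigl([\mC C,\mC V]/\mC Q\bigr)$, natural isomorphisms
\begin{align*}
\Hom_{\D([\mC C,\mC V]/\mC Q)}\bigl((\mC V_\mC C(a,-)\oslash Q_j)_\mC Q,Z[n]\bigr)
&\cong \Hom_{\D[\mC C,\mC V]}\bigl(\mC V_\mC C(a,-)\oslash Q_j,kZ[n]\bigr)\\
&\cong \Hom_{\D(\mC V)}\bigl(Q_j,(kZ)(a)[n]\bigr).
\end{align*}
Since the $Q_j$ are compact generators of $\D(\mC V)$, the left-hand side vanishes for all $j$ and $n$ if and only if $(kZ)(a)$ is acyclic, equivalently $H^m(kZ)(a)=0$ for all $m\in\bB Z$. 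Assume first that $Z\in(\mC T_\mC A^\mC Q)^\perp$. Then $H^m(kZ)\in\mC S_\mC A$ for every $m$, and applying the exact functor $(-)_\mC Q$ degreewise to $kZ$ and using that exact functors commute with homology, I obtain $H^m_{[\mC C,\mC V]/\mC Q}(Z)=H^m(kZ)_\mC Q\in\mC S_\mC A^\mC Q$, so $Z\in\mC E_\mC A^\mC Q$. Conversely, if $Z\in\mC E_\mC A^\mC Q$ then the strict Voevodsky property of $\mC S_\mC A$ with respect to $\mC Q$ says precisely that $kZ\in\mC E_\mC A$, so $H^m(kZ)\in\mC S_\mC A$ for every $m$, hence $(kZ)(a)$ is acyclic for every $a\in\mC A$, and the Hom groups above vanish.

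I expect this identification to be the main obstacle, since it is the unique place where the strict Voevodsky hypothesis enters; without it there is no evident way to bridge homology computed in $[\mC C,\mC V]/\mC Q$ and pointwise acyclicity of the representing complex $kZ$ in $[\mC C,\mC V]$. Once the equality $(\mC T_\mC A^\mC Q)^\perp=\mC E_\mC A^\mC Q$ has been established, Krause's recollement takes exactly the form asserted in the statement, with all the claimed adjoint functors supplied by~\cite[5.6.1]{Krause2}.
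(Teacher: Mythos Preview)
Your proposal is correct and follows essentially the same route as the paper: first invoke Krause's recollement~\cite[5.6.1]{Krause2} for the localizing subcategory $\mC T_\mC A^\mC Q$ generated by the compact objects $(\mC V_\mC C(a,-)\oslash Q_j)_\mC Q$, and then identify $(\mC T_\mC A^\mC Q)^\perp$ with $\mC E_\mC A^\mC Q$ via the adjunction with $k$ and~\eqref{tenshom}, using the strict Voevodsky property for the inclusion $\mC E_\mC A^\mC Q\subset(\mC T_\mC A^\mC Q)^\perp$ and the computation $H^*_{[\mC C,\mC V]/\mC Q}(Z)\cong H^*(kZ)_\mC Q$ for the reverse inclusion. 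Your extra verification that the $(\mC V_\mC C(c,-)\oslash Q_j)_\mC Q$ generate the whole of $\D([\mC C,\mC V]/\mC Q)$ is not strictly needed (Krause only requires a set of compact objects in a compactly generated category), but it is correct and harmless.
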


\begin{proof} Let us show that $\mC
E_\mC A^\mC Q=(\mC T_\mC A^\mC Q)^\perp$.
Suppose $X\in\mC E_\mC A^\mC Q$, then
\begin{align*}
\D\bigl([\mC C,\mC V]/\mC Q\bigr)\Bigl(\bigl(\mC V_\mC C(a,-)\oslash Q_j\bigr)_\mC
Q,X\Bigr)&=\K\bigl([\mC C,\mC V]/\mC Q\bigr)\Bigl(\bigl(\mC V_\mC C(a,-)\oslash
Q_j\bigr)_\mC Q,kX\Bigr)\\
&=\K[\mC C,\mC V]\Bigl(\mC V_\mC C(a,-)\oslash
Q_j,kX\Bigr),
\end{align*} 
where $kX$ is the \K-injective resolution of $X$ in $\D\bigl([\mC C,\mC V]/\mC Q\bigr)$.
Since $\mC S_{\mC A}$ satisfies the strict $V$-property by assumption, it follows from
Lemma~\ref{perp} that $kX\in\mC E_\mC A=\mC T_\mC A^\perp$ and then $\K[\mC C,\mC
V]\bigl(\mC V_\mC C(a,-)\oslash Q_j,kX\bigr)=0$. Therefore $X\in (\mC T_\mC A^\mC Q)^\perp$, 
because by a theorem of Neeman~\cite[2.1]{Nee96} 
$\mC T_\mC A^{\mC Q}$ is the smallest triangulated full subcategory closed under direct sums 
containing $\bigl(\mC V_\mC C(a,-)\oslash Q_j\bigr)_{\mC Q}$-s. Next, given $Y\in(\mC T_\mC
A^\mC Q)^\perp$ then
\begin{align*}
0&=\D\big([\mC C,\mC V]/\mC Q\big)\Big(\big(\mC V_\mC C(a,-)\oslash
Q_j\big)_\mC Q,kY\Big)\\
&=\K\big([\mC C,\mC V]/\mC Q\big)\Big(\big(\mC V_\mC C(a,-)\oslash
Q_j\big)_\mC Q,kY\Big)\\
&=\K[\mC C,\mC V]\Big(\mC V_\mC C(a,-)\oslash Q_j,kY\Big)\\
&=\D[\mC C,\mC V]\Big(\mC V_\mC C(a,-)\oslash Q_j,kY\Big)\\
&\cong\D(\mC V)\Big(
Q_j,kY(a)\Big).
\end{align*}
We use here the isomorphism~\eqref{tenshom} and the fact that $kY$ is \K-injective in $\D[\mC C,\mC V]$.
This implies that $kY(a)$ is acyclic, because $Q_j$-s are compact generators of $\D(\mC V)$. 
Thus $kY$ has cohomology belonging to $\mC S_\mC A$ and therefore its $\mC Q$-localized cohomology belongs to $\mC S_\mC
A^\mC Q.$ Since $H^*_{[\mC C,\mC V]/\mC Q}(Y)=H^*_{[\mC C,\mC V]/\mC Q}(kY)\in\mC S_\mC A^\mC Q$ 
we have that $Y\in\mC E_\mC A^\mC Q.$ We conclude
that $\mC E_\mC A^\mC Q=\bigl(\mC T_\mC A ^\mC Q\bigr)^\perp\!.$ 
Since $\mC T_\mC A ^\mC Q$ is compactly generated, our statement now follows from~\cite[5.6.1]{Krause2}.
\end{proof} 

\begin{lemma}\label{kkk}
Under the assumptions of Theorem~\ref{bigthm}
let $X\in\D\bigl([\mC C,\mC V]/\mC Q\bigr)$ and $Y=\lambda^\mC Q(X)$. Then the natural morphism $f_a:kX(a)\to
kY(a)$ induced by $X\to Y$ is a quasi-isomorphism in $\Ch(\mC V)$ for all $a\in\mC A$, where $kX,kY$ are \K-injective resolutions
of $X,Y$ in $\D\bigl([\mC C,\mC V]/\mC Q\bigr)$. Furthermore, if $k'Y$ is a \K-injective resolution of
$Y$ in $\D([\mC C,\mC V]/\mC J_{\mC A})$, then the induced morphisms $g_a:kY(a)\to
k'Y(a)$ and $g_af_a:kX(a)\to k'Y(a)$ are quasi-isomorphisms in $\Ch(\mC V)$ for all $a\in\mC A$.
\end{lemma}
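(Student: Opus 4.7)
My plan is to reduce both assertions to the strict Voevodsky property by realising each map whose quasi-isomorphism must be proved as the evaluation at $a$ of the mapping cone of a morphism of \K-injective complexes in $\Ch([\mC C,\mC V]/\mC Q)$ whose cohomology lies in $\mC S_\mC A^\mC Q$. A cone of such a morphism is itself \K-injective in $\Ch([\mC C,\mC V]/\mC Q)$, since \K-injective complexes form a triangulated subcategory closed under cones; hence it coincides with its own image under the functor $k$ when viewed as a complex in $\Ch[\mC C,\mC V]$. The strict $V$-property then forces its homology in $[\mC C,\mC V]$ to lie in $\mC S_\mC A$, and because evaluation $\mathrm{Ev}_a:[\mC C,\mC V]\to\mC V$ is exact (Lemma~\ref{bicomplete}) and therefore commutes with homology, the resulting complex in $\mC V$ is acyclic for every $a\in\mC A$.

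For the first claim, I take the unit $u:X\to\lambda_R^\mC Q\lambda^\mC Q(X)=Y$ in $\D([\mC C,\mC V]/\mC Q)$ and lift it through a chain morphism $kX\to kY$ using \K-injectivity of $kY$. The mapping cone $C'$ of this lift is \K-injective in $\Ch([\mC C,\mC V]/\mC Q)$ and quasi-isomorphic to the cone $C$ of $u$ in $\D([\mC C,\mC V]/\mC Q)$. Applying the exact functor $\lambda^\mC Q$ to $u$ yields the identity of $\lambda^\mC Q(X)$ by the triangle identity and full faithfulness of $\lambda_R^\mC Q$, so $\lambda^\mC Q(C)=0$ and consequently $C$, and hence also $C'$, lies in $\mC E_\mC A^\mC Q$. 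The strict $V$-property gives $H^n(C')\in\mC S_\mC A$ in $[\mC C,\mC V]$, so $H^n(C'(a))=H^n(C')(a)=0$ for all $a\in\mC A$; since $C'(a)=\mathrm{cone}\bigl(kX(a)\to kY(a)\bigr)$, this shows that $f_a$ is a quasi-isomorphism.

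For the second claim, I first verify that $k'Y$ is \K-injective in $\Ch([\mC C,\mC V]/\mC Q)$: if $W\in\Ch([\mC C,\mC V]/\mC Q)$ is acyclic, then $\lambda^\mC Q(W)$ is acyclic in $\Ch([\mC C,\mC V]/\mC J_\mC A)$ by exactness of $\lambda^\mC Q$, and the adjunction $\lambda^\mC Q\dashv\lambda_R^\mC Q$ together with \K-injectivity of $k'Y$ in $\Ch([\mC C,\mC V]/\mC J_\mC A)$ yields $\Hom_{\K([\mC C,\mC V]/\mC Q)}(W,k'Y)=0$. I then lift $Y\to k'Y$ across the quasi-isomorphism $Y\to kY$ to obtain the map $kY\to k'Y$ whose evaluation at $a$ is $g_a$, and set $C''=\mathrm{cone}(kY\to k'Y)$. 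Applying $\lambda^\mC Q$ to $C''$ gives the cone of the induced map $\lambda^\mC Q(kY)\to k'Y$, which is a quasi-isomorphism in $\Ch([\mC C,\mC V]/\mC J_\mC A)$ since both source and target are resolutions of $Y$ there. Hence $C''\in\mC E_\mC A^\mC Q$, and the argument of the previous paragraph applied to $C''$ produces the desired quasi-isomorphism $g_a$; the composite $g_af_a$ is then automatically a quasi-isomorphism.

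The main technical hurdle I expect is the passage from $\Ch([\mC C,\mC V]/\mC J_\mC A)$ back to $\Ch([\mC C,\mC V]/\mC Q)$: establishing \K-injectivity of $k'Y$ in the larger category and tracking the cohomology of $C''$ across the (non-exact) embedding $[\mC C,\mC V]/\mC J_\mC A\hookrightarrow[\mC C,\mC V]/\mC Q$. Once these are in place, the strict $V$-property does the essential work in both parts, since it is precisely the translation mechanism between cohomology in $[\mC C,\mC V]/\mC Q$ and in $[\mC C,\mC V]$.
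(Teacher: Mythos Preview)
Your argument is correct and essentially identical to the paper's: in both cases one takes the cone of the relevant map of \K-injective complexes in $\Ch\bigl([\mC C,\mC V]/\mC Q\bigr)$, observes that this cone lies in $\mC E_\mC A^\mC Q$ because $\lambda^\mC Q$ kills it, and then invokes the strict $V$-property to force evaluation at $a\in\mC A$ to be acyclic. The paper phrases this more abstractly by applying the triangulated functor $k$ of Definition~\ref{vsprop} to the triangle $Z\to X\to Y\to Z[1]$ (respectively $W\to Y\to k'Y\to W[1]$) in $\D\bigl([\mC C,\mC V]/\mC Q\bigr)$, while you build the same cone concretely from chain maps between \K-injective resolutions; this is a difference in packaging only. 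Your explicit verification that $k'Y$ is \K-injective in $\Ch\bigl([\mC C,\mC V]/\mC Q\bigr)$ via the chain-level adjunction is exactly what the paper asserts without proof, and your anticipated ``technical hurdle'' about tracking cohomology across the non-exact embedding does not in fact arise: the cone $C''$ already lives in $\Ch\bigl([\mC C,\mC V]/\mC Q\bigr)$, so its cohomology is computed there directly, and the only input needed is that $\lambda^\mC Q(C'')$ is acyclic.
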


\begin{proof}
Consider a triangle $Z\to X\overset{\phi}{\longrightarrow} Y\to Z[1]$
in $\D\bigl([\mC C,\mC V]/\mC Q\bigr)$.
We have then $Z\in \mC E^\mC Q_\mC A$, as $\Ker\lambda^\mC Q=\mC E_\mC A^\mC Q$
and $\lambda^\mC Q(\phi)$ is an isomorphism in $\Ch\bigl([\mC C,\mC V]/\mC J_{\mC A}\bigr)$.
Apply the triangulated functor of Definition~\ref{vsprop} $k:\D\bigl([\mC C,\mC V]/\mC Q\bigr)\to\D[\mC C,\mC V]$ 
and consider the triangle $kZ\to kX\overset{f}
\to kY\to kZ[1]$ in $\D[\mC C,\mC V]$, where $f=k(\phi)$. We have $kZ\in \mC E_\mC A$ in
$\D[\mC C,\mC V]$ by an assumption of Theorem~\ref{bigthm}. Hence
$kZ(a)\cong 0$ and $f_a:kX(a)\xrightarrow\cong kY(a)$ in $\D(\mC V)$.
We use here the triangulated evaluation functor $\D[\mC C,\mC V]\to\D(\mC V)$
induced by the exact evaluation functor $B\in[\mC C,\mC V]\mapsto B(a)\in\mC V$.

Now consider a triangle $W\to Y \overset{\gamma}{\longrightarrow} k'Y\to W[1]$
in $\D\bigl([\mC C,\mC V]/\mC Q\bigr)$. Then $\lambda^\mC Q(\gamma)$ is a quasi-isomorphism in 
$\Ch\bigl([\mC C,\mC V]/\mC J_{\mC A}\bigr)$, and hence $W\in \mC E^\mC Q_\mC A$. Note that
$k'Y$ is \K-injective in $\D\bigl([\mC C,\mC V]/\mC Q\bigr)$.
As above the induced 
morphism $g:kY\to k'Y$ is such that $g_a:kY(a)\to k'Y(a)$ is a quasi-isomorphisms in 
$\Ch(\mC V)$ for all $a\in\mC A$, and hence so is $g_af_a:kX(a)\to k'Y(a)$.
\end{proof}

\begin{corollary}\label{FQ1} 
The functor $\lambda^\mC Q$ induces an isomorphism
$$\D\bigl([\mC C,\mC V]/\mC Q\bigr)\Bigl(\bigl(\mC V_\mC C(a,-)\oslash P\bigr)_\mC Q,X\Bigr)\xrightarrow\cong\mC \D\bigl([\mC
C,\mC V]/\mC J_\mC A\bigr)\Bigl(\bigl(\mC V_\mC C(a,-)\oslash P\bigr)_{\mC J_{\mC A}},Y\Bigr)$$
for all $a\in\mC A$, $P\in\{Q_j\}_J$, $X\in\D\bigl([\mC C,\mC V]/\mC Q\bigr)$ and $Y=\lambda^\mC Q(X)$.
\end{corollary}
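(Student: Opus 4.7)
The plan is to reduce both sides of the claimed isomorphism to Hom-sets in $\D(\mC V)$ by essentially the same mechanism used to prove Theorem~\ref{cond} and Lemma~\ref{perp}, and then invoke Lemma~\ref{kkk} to conclude. The natural map $\lambda^{\mC Q}$ will be shown to coincide, under these identifications, with the map induced at the evaluation level by $kX(a)\to k'Y(a)$, which is a quasi-isomorphism for $a\in\mC A$.

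More concretely, I would proceed as follows. First, replace $X$ by a \K-injective resolution $kX$ in $\D\bigl([\mC C,\mC V]/\mC Q\bigr)$. By Definition~\ref{vsprop} the functor $k:\D\bigl([\mC C,\mC V]/\mC Q\bigr)\to\D[\mC C,\mC V]$ sends \K-injectives to \K-injectives, so I can compute
\begin{align*}
\D\bigl([\mC C,\mC V]/\mC Q\bigr)\Bigl(\bigl(\mC V_\mC C(a,-)\oslash P\bigr)_\mC Q,X\Bigr)
&\cong \K[\mC C,\mC V]\bigl(\mC V_\mC C(a,-)\oslash P,\,kX\bigr) \\
&\cong \D[\mC C,\mC V]\bigl(\mC V_\mC C(a,-)\oslash P,\,kX\bigr) \\
&\cong \D(\mC V)\bigl(P,\,kX(a)\bigr),
\end{align*}
where the final isomorphism is the instance of \eqref{tenshom} proven in the course of Theorem~\ref{cond}. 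Second, I perform the same reduction on the other side using a \K-injective resolution $k'Y$ of $Y$ in $\D\bigl([\mC C,\mC V]/\mC J_\mC A\bigr)$, viewed in $\D[\mC C,\mC V]$ via the analogue of the functor $k$ for the pair $(\mC J_\mC A,[\mC C,\mC V])$, obtaining an identification with $\D(\mC V)\bigl(P,\,k'Y(a)\bigr)$.

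Third, I invoke Lemma~\ref{kkk}: the natural morphism $kX(a)\to k'Y(a)$ induced by $X\to Y=\lambda^{\mC Q}(X)$ is a quasi-isomorphism in $\Ch(\mC V)$ for every $a\in\mC A$, and so induces an isomorphism on $\D(\mC V)\bigl(P,-\bigr)$. Concatenating the three chains of isomorphisms yields a bijection of the desired form.

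The step demanding the most care is verifying that this bijection is precisely the one induced by $\lambda^{\mC Q}$ rather than some other isomorphism. This is a naturality/commutativity check analogous to the diagram chase performed in Lemma~\ref{L6}: one verifies that the adjunction isomorphism \eqref{tenshom} is compatible with the exact localization functors $(-)_{\mC Q}$ and $(-)_{\mC J_{\mC A}}$ and with passage to \K-injective resolutions, so that the square in which the top edge is $\lambda^{\mC Q}$ and the bottom edge is $(f_a)_\ast$ (notation of Lemma~\ref{kkk}) commutes. Once this naturality square is established, the conclusion is immediate.
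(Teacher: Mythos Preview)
Your proposal is correct and follows essentially the same route as the paper: both reduce each side to $\D(\mC V)\bigl(P,-\bigr)$ via \K-injective resolutions and the isomorphism~\eqref{tenshom}, then invoke Lemma~\ref{kkk} to identify the two, with the naturality check packaged in the paper as a single commutative diagram. Your explicit mention that the compatibility of~\eqref{tenshom} with localization and \K-injective replacement is the point requiring care (analogous to Lemma~\ref{L6}) is exactly what the paper's diagram encodes.
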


\begin{proof}
This follows from Lemma~\ref{kkk} isomorphism~\eqref{tenshom} 
and the commutativity of the following diagram, in which all vertical arrows are isomorphisms:
\begin{diagram*}[column sep=small]
\D\bigl([\mC C,\mC V]/\mC Q\bigr)\Bigl(\bigl(\mC V_\mC C(a,-)\oslash P\bigr)_\mC Q,X\Bigr) \arrow[r,
"\lambda^\mC Q_*"]\arrow[d,"\cong"]& \mC \D\bigl([\mC C,\mC
V]/\mC J_\mC A\bigr)\Bigl(\bigl(\mC V_\mC C(a,-)\oslash P\bigr)_{\mC J_{\mC A}},Y\Bigr)\arrow[d,"\cong"]\\
\D\bigl([\mC C,\mC V]/\mC Q\bigr)\Bigl(\bigl(\mC V_\mC C(a,-)\oslash P\bigr)_\mC Q,kX\Bigr)
\arrow[d,"\cong"]& \D\bigl([\mC C,\mC V]/\mC J_\mC A\bigr)\Bigl(\bigl(\mC V_\mC C(a,-)\oslash P\bigr)_{\mC J_{\mC A}},k'Y\Bigr)\arrow[d,"\cong"]\\
\D[\mC C,\mC V]\bigl(\mC V_\mC C(a,-)\oslash P,kX\bigr) \arrow[d,"\cong"]& \D[\mC
C,\mC V]\bigl(\mC V_\mC C(a,-)\oslash P,k'Y\bigr)\arrow[d,"\cong"]\\
\D(\mC V)\bigl(P,kX(a)\bigr) \arrow[r,"\cong"]& \D(\mC V)\bigl(P,k'Y(a)\bigr)
\end{diagram*}
where $kX$ is a \K-injective resolution of $X$ in $\D\bigl([\mC C,\mC V]/\mC Q\bigr)$
and $k'Y$ is a \K-injective resolution of $Y$ in $\D\bigl([\mC C,\mC V]/\mC J_{\mC A}\bigr)$.
\end{proof}

\begin{corollary} 
The triangulated functor 
$F^\mC Q:=\lambda^\mC Q\circ \tau_L^\mC Q:\mC T_\mC A^\mC
Q\to\D\bigl([\mC C,\mC V]/\mC J_\mC A\bigr)$ is fully faithful.
\end{corollary}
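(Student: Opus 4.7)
The plan is to leverage Corollary~\ref{FQ1} together with a standard argument based on Neeman's theorem~\cite[2.1]{Nee96}. Recall from Lemma~\ref{TAQ} that $\mC T_\mC A^\mC Q$ is compactly generated by the set $\mathcal G := \bigl\{(\mC V_\mC C(a,-)\oslash Q_j)_\mC Q \mid a\in\mC A, j\in J\bigr\}$, and that $F^\mC Q$ sends the generator $(\mC V_\mC C(a,-)\oslash Q_j)_\mC Q$ to $(\mC V_\mC C(a,-)\oslash Q_j)_{\mC J_\mC A}$.

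First I would verify that $F^\mC Q$ preserves direct sums. The inclusion $\tau_L^\mC Q$ preserves coproducts since $\mC T_\mC A^\mC Q$ is a localizing subcategory of $\D\bigl([\mC C,\mC V]/\mC Q\bigr)$, so coproducts in the former are inherited from the latter. The functor $\lambda^\mC Q$ preserves coproducts because it is induced degreewise by the exact, cocontinuous abelian $\mC S_\mC A^\mC Q$-localization $[\mC C,\mC V]/\mC Q \to [\mC C,\mC V]/\mC J_\mC A$ (using $[\mC C,\mC V]/\mC J_\mC A \cong ([\mC C,\mC V]/\mC Q)/\mC S_\mC A^\mC Q$ from Lemma~\ref{L5}).

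Next, fix an arbitrary $Y \in \mC T_\mC A^\mC Q$ and consider the full subcategory
\begin{equation*}
\mC S'_Y := \Bigl\{X \in \mC T_\mC A^\mC Q \;\Big|\; F^\mC Q : \Hom_{\mC T_\mC A^\mC Q}\bigl(X, Y[n]\bigr) \xrightarrow{\cong} \Hom_{\D([\mC C,\mC V]/\mC J_\mC A)}\bigl(F^\mC Q X, F^\mC Q Y[n]\bigr)\ \forall n\in\bB Z\Bigr\}.
\end{equation*}
By Corollary~\ref{FQ1} (applied with $X$ in the role of $Y$ there), every compact generator in $\mathcal G$ lies in $\mC S'_Y$. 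A standard five-lemma argument applied to the long exact Hom-sequences of a distinguished triangle in $\mC T_\mC A^\mC Q$ shows that $\mC S'_Y$ is triangulated. Closure under arbitrary direct sums follows from the chain of natural isomorphisms: $\Hom(\bigoplus_i X_i, Y[n]) \cong \prod_i \Hom(X_i, Y[n])$, together with $F^\mC Q(\bigoplus_i X_i) \cong \bigoplus_i F^\mC Q X_i$ from the preceding step, giving $\Hom(F^\mC Q \bigoplus_i X_i, F^\mC Q Y[n]) \cong \prod_i \Hom(F^\mC Q X_i, F^\mC Q Y[n])$; so the product of isomorphisms is an isomorphism.

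Finally, Neeman's theorem~\cite[2.1]{Nee96} forces $\mC S'_Y = \mC T_\mC A^\mC Q$, as $\mC S'_Y$ is a localizing triangulated subcategory of $\mC T_\mC A^\mC Q$ containing its set of compact generators. Since $Y \in \mC T_\mC A^\mC Q$ was arbitrary, $F^\mC Q$ is fully faithful. The main potential obstacle is the verification that $\lambda^\mC Q$ preserves coproducts, but this reduces to the observation that the underlying abelian localization at $\mC S_\mC A^\mC Q$ is an exact left adjoint and hence cocontinuous, which lifts to the derived level via degreewise application; no compactness of $F^\mC Q Y$ in the target derived category is needed, since the sum-closure argument fixes $Y$ and varies only the first argument of the Hom.
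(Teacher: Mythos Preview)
Your proof is correct and follows essentially the same approach as the paper. Both arguments use Corollary~\ref{FQ1} to handle the compact generators, verify that $F^\mC Q$ preserves direct sums (the paper justifies this by noting direct sums in derived categories of Grothendieck categories are formed degreewise), and then apply Neeman's theorem~\cite[2.1]{Nee96} to the localizing subcategory $\mC S'_Y$ of objects $X$ for which the induced map on Hom-sets is an isomorphism; your write-up is slightly more explicit about the five-lemma step for triangulated closure, which the paper leaves implicit.
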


\begin{proof}
It follows from Corollary~\ref{FQ1} that the functor $F^\mC Q$ is fully faithful
on pairs $Z,X\in\mC T_\mC A^\mC Q$  with $Z$ compact. 
Moreover, $F^\mC Q$ preserves direct sums, because direct sums in the
derived category of a Grothendieck category are formed degreewise~\cite[19.13.4]{Stack} 
and $\lambda^\mC Q$ preserves degreewise direct sums.
Let $S\subset \mC T_\mC A^\mC Q$ be the
full triangulated subcategory of those objects $Z$ such that 
$(Z,X)\to(F^\mC Q Z,F^\mC Q X)$ is an isomorphism for a fixed $X\in\mC T_\mC A^\mC Q$. 
Then $S\supset(\mC T_\mC A^\mC Q)^c$ and $S$ is closed under direct sums.
Therefore $S$ is localizing. We
conclude from a theorem of Neeman~\cite[2.1]{Nee96} that $S=\mC T_\mC A^\mC Q$.
\end{proof} 

\begin{lemma}\label{equiv}
The triangulated functor $F^\mC Q:\mC T_\mC A^\mC Q\to\D\bigl([\mC C,\mC V]/\mC J_\mC A\bigr)$ is an equivalence.
\end{lemma}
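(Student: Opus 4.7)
The plan is to apply Lemma~\ref{equ}. The preceding corollary already establishes that $F^\mC Q$ is fully faithful, and Corollary~\ref{FQ1} supplies the required isomorphisms on morphism groups. The natural candidate for a set of compact generators of $\mC T_\mC A^\mC Q$ is $\Sigma=\bigl\{(\mC V_\mC C(a,-)\oslash Q_j)_\mC Q\mid a\in\mC A,\,j\in J\bigr\}$, which $F^\mC Q$ sends to $\Sigma'=\bigl\{(\mC V_\mC C(a,-)\oslash Q_j)_{\mC J_\mC A}\mid a\in\mC A,\,j\in J\bigr\}$. Direct sums in the derived category of a Grothendieck category are formed degreewise, hence both the inclusion $\tau_L^\mC Q$ and the exact $\mC S_\mC A^\mC Q$-localization $\lambda^\mC Q$ preserve them, and so does $F^\mC Q$. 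It remains to verify that the objects of $\Sigma'$ are compact and generate $\D\bigl([\mC C,\mC V]/\mC J_\mC A\bigr)$.

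For compactness, I first observe that $\lambda^\mC Q$ is essentially surjective: any $Y\in\D\bigl([\mC C,\mC V]/\mC J_\mC A\bigr)$ is represented by its \K-injective resolution $kY$, and since each term of $kY$ is $\mC S_\mC A^\mC Q$-local, viewing $kY$ as a complex in $\D\bigl([\mC C,\mC V]/\mC Q\bigr)$ furnishes an $X$ with $\lambda^\mC Q(X)\cong Y$. Given a family $\{Y_i\}$ with chosen preimages $\{X_i\}$, one has $\bigoplus Y_i\cong\lambda^\mC Q(\bigoplus X_i)$. Combining two applications of Corollary~\ref{FQ1} with the compactness of $(\mC V_\mC C(a,-)\oslash Q_j)_\mC Q$ in $\D\bigl([\mC C,\mC V]/\mC Q\bigr)$, which is guaranteed by the first bullet of Theorem~\ref{bigthm}, yields
$$\Hom\bigl((\mC V_\mC C(a,-)\oslash Q_j)_{\mC J_\mC A},\bigoplus Y_i\bigr)\cong\bigoplus\Hom\bigl((\mC V_\mC C(a,-)\oslash Q_j)_{\mC J_\mC A},Y_i\bigr).$$
For generation, suppose $Y=\lambda^\mC Q(X)$ is right-orthogonal to all shifts of $\Sigma'$. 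Transferring the vanishing via Corollary~\ref{FQ1} places $X$ in $(\mC T_\mC A^\mC Q)^\perp=\mC E_\mC A^\mC Q$ by Lemma~\ref{TAQ}. Unpacking Definition~\ref{vsprop} identifies $\mC E_\mC A^\mC Q$ with $\Ker\lambda^\mC Q$, as $(-)_{\mC S_\mC A^\mC Q}$ kills precisely those objects whose cohomology lies in $\mC S_\mC A^\mC Q$, so $Y=0$.

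Thus $\Sigma'$ is a set of compact generators of $\D\bigl([\mC C,\mC V]/\mC J_\mC A\bigr)$, and Lemma~\ref{equ} concludes that $F^\mC Q$ is an equivalence of triangulated categories. The delicate step is compactness: it propagates the compact-object hypothesis from the first localization $(-)_\mC Q$ through the second $\mC S_\mC A^\mC Q$-localization without demanding an independent assumption of compactness preservation, and this propagation is possible only because Corollary~\ref{FQ1} is available, which in turn relies on the strict $V$-property enjoyed by $\mC S_\mC A$ with respect to $\mC Q$.
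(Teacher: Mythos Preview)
Your proof is correct but takes a different route from the paper. You verify the hypotheses of Lemma~\ref{equ} by showing that $\Sigma'=\bigl\{(\mC V_\mC C(a,-)\oslash Q_j)_{\mC J_\mC A}\bigr\}$ is a family of compact generators of $\D\bigl([\mC C,\mC V]/\mC J_\mC A\bigr)$, using Corollary~\ref{FQ1} as the bridge. The paper instead relies on the full faithfulness established in the preceding corollary and proves essential surjectivity directly: any $Y\in\D\bigl([\mC C,\mC V]/\mC J_\mC A\bigr)$, regarded as a complex in $\D\bigl([\mC C,\mC V]/\mC Q\bigr)$, satisfies $\lambda^\mC Q(Y)=Y$; applying $\tau^\mC Q$ and the recollement triangle of Lemma~\ref{TAQ} then yields $F^\mC Q(\tau^\mC Q Y)\cong Y$. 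The paper's argument is shorter because it leverages the recollement already in place and never needs to know that the target category is compactly generated. Your approach, while longer, has the side benefit of establishing that $\D\bigl([\mC C,\mC V]/\mC J_\mC A\bigr)$ is compactly generated by $\Sigma'$, a fact not recorded elsewhere. One minor redundancy: once you invoke Lemma~\ref{equ}, your opening appeal to the preceding corollary (full faithfulness of $F^\mC Q$) is unnecessary, since condition~(2) there, supplied by Corollary~\ref{FQ1}, already delivers what Lemma~\ref{equ} requires.
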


\begin{proof}
As $F^\mC Q$ is fully faithful by the preceding corollary, it remains to show that it is essentially
surjective. Take any $Y\in \D\bigl([\mC C,\mC V]/\mC J_\mC A\bigr)$ and consider it as an object of
$\D\bigl([\mC C,\mC V]/\mC Q\bigr)$. Then $\lambda^\mC
Q(Y)=Y.$ If we take $\tau^\mC Q (Y)\in\mC T_\mC A^\mC Q$ (see Lemma~\ref{TAQ}), then $F^\mC Q(\tau^\mC QY)=\lambda^\mC
Q\tau_L^\mC Q\tau^\mC Q Y$. Using Lemma~\ref{TAQ} and Proposition~\ref{adjtri}, there exists a
triangle $$\tau^\mC Q_L\tau^\mC Q
Y\to Y \to \iota^\mC Q \iota^\mC Q_L Y\to (\tau^\mC Q_L\tau^\mC Q Y)[1].$$
Applying the triangulated functor $\lambda^\mC Q$ to it, it follows that
$\lambda^\mC Q\tau_L^\mC Q\tau^\mC Q Y\cong \lambda^\mC QY=Y$. 
We have $F^\mC Q(\tau^\mC QY)\cong Y$ and thus $F^\mC Q$ is
essentially surjective. We conclude that $F^\mC Q$ is an equivalence.
\end{proof} 

\begin{proof}[Proof of Theorem~\ref{bigthm}]
We define the functors $\iota^\mC Q_L,\iota^\mC Q,\iota^\mC Q_R$ to be those coming from
Lemma~\ref{TAQ}, and $\Ker\lambda^\mC Q=\mC E_{\mC A}^{\mC Q}$ is immediate.
We set $\lambda^\mC Q_L:=\tau_L^\mC Q\circ (F^\mC Q)^{-1}$. Then $\lambda^\mC Q_L$ 
is fully faithful as it is the composition of fully faithful functors. We have natural isomorphisms
\begin{align*}
\D\bigl([\mC C,\mC V]/\mC Q\bigr)(\lambda^\mC Q_LX,Y)&=\D\bigl([\mC C,\mC V]/\mC Q\bigr)
\bigl(\tau^\mC Q_L\circ(F^\mC Q)^{-1}X,Y\bigr)\\&\cong\D\bigl([\mC C,\mC V]/\mC J_\mC
A\bigr)(X,F^\mC Q\circ\tau^\mC Q Y)\\&\cong\D\bigl([\mC C,\mC V]/\mC J_\mC
A\bigr)(X,\lambda^\mC Q\circ\tau_L^\mC Q\circ\tau^\mC QY)\\
&\cong\D\bigl([\mC C,\mC V]/\mC J_\mC
A\bigr)(X,\lambda^\mC QY).
\end{align*}
We use here the fact that $\lambda^\mC Q\tau_L^\mC Q\tau^\mC Q Y\cong \lambda^\mC QY$
(see the proof of Lemma~\ref{equiv}).
Therefore $\lambda^\mC Q_L\dashv\lambda^\mC Q$. The proof that
$\lambda^\mC Q\dashv\lambda_R^\mC Q$ is obvious. Lemma~\ref{mm} now implies that
$\lambda_R^\mC Q$ is fully faithful. Hence the theorem.
\end{proof}

We can summarise Theorems~\ref{lambda}-\ref{bigthm} by drawing the following diagram
\begin{diagram*}[column sep=huge,row sep=huge]
\mC E_\mC A \arrow[r,"\iota" description] \arrow[d, shift
right=1ex,"(\cdot)_\mC
Q" swap]&{\D[\mC C,\mC V]}\arrow[d, shift right=1ex,"(\cdot)_\mC
Q" swap]
   \arrow[l, shift right=1em,swap ,"\iota_L"] 
   \arrow[l,shift left=1em, "\iota_R"]
   \arrow[r,"\lambda" description]			
& {\D\bigl([\mC C,\mC V]/\mC S_\mC A\bigr)}\arrow[d, shift right=1ex,"(\cdot)_\mC
Q'" swap]\arrow[l,swap, shift right=1em,"\lambda_L"] \arrow[l,shift
left=1em,"\lambda_R"] \\
 \mC E_\mC A^\mC Q\arrow[u,"k" swap]\arrow[r,"\iota^\mC Q" description]
 &{\D\bigl([\mC C,\mC V]/\mC Q\bigr)}\arrow[u,"k" swap]
   \arrow[l, shift right=1em,swap, "\iota_L^\mC Q"] 
   \arrow[l,shift left=1em,"\iota_R^\mC Q"]
   \arrow[r,"\lambda^\mC Q" description]			
& {\D\bigl([\mC C,\mC V]/\mC J_\mC A\bigr)}\arrow[l,swap, shift right=1em,
"\lambda_L^\mC Q"]\arrow[u,"k'" swap]
   \arrow[l, shift left=1em,"\lambda_R^\mC Q"]
\end{diagram*}
in which the top and bottom horizontal arrows form recollements, the vertical arrows
$k,k'$ take \K-injective resolutions in the derived categories of the corresponding quotient
Grothendieck categories, the functor $(\cdot)_\mC Q$ is induced by $\mC Q$-localization
and $(\cdot)_\mC Q'$ is induced by the exact $\sqrt{\mC Q_{\mC S_{\mC A}}}$-localization functor 
$[\mC C,\mC V]/\mC S_\mC A\to[\mC C,\mC V]/\mC J_\mC A$ with 
$\mC Q_{\mC S_{\mC A}}=\{X_{\mC S_{\mC A}}\mid X\in\mC Q\}$.

We have the following obvious equivalences of functors:
   $$(\cdot)_\mC Q'\circ\lambda\cong\lambda^{\mC Q}\circ(\cdot)_\mC Q,\quad
       (\cdot)_\mC Q\circ\iota\cong\iota^{\mC Q}\circ(\cdot)_\mC Q,\quad
       \lambda_R\circ k'\cong k\circ\lambda^{\mC Q}_R,\quad
       \iota\circ k\cong k\circ\iota^{\mC Q}.$$
Since $(\cdot)_\mC Q\dashv k$, $\iota^\mC Q_L\dashv\iota^\mC Q$, $\iota_L\dashv\iota$,
it follows that $\iota^\mC Q_L\circ(\cdot)_\mC Q\dashv k\circ\iota^\mC Q$ and
$(\cdot)_\mC Q\circ\iota_L\dashv\iota\circ k$. But $\iota\circ k\cong k\circ\iota^{\mC Q}$, 
hence there is an equivalence of functors
   $$\iota^\mC Q_L\circ(\cdot)_\mC Q\cong(\cdot)_\mC Q\circ\iota_L.$$
We obtain an equivalence of functors 
$\iota^\mC Q_L\circ(\cdot)_\mC Q\circ k\cong(\cdot)_\mC Q\circ\iota_L\circ k$.
Since $(\cdot)_\mC Q\circ k\cong\id$, we see that 
$\iota^\mC Q_L\cong(\cdot)_\mC Q\circ\iota_L\circ k$.

As we have noticed in Remark~\ref{remdm}, the functor $\iota^\mC Q_L$ is of
particular importance in Voevodsky's triangulated categories of motives. It was computed 
by Voevodsky~\cite[3.2.6]{Voe2} as the Suslin complex $C_*$. In order to make this
precise, we convert recollements of Theorems~\ref{lambda}-\ref{bigthm} into the Voevodsky
language of triangulated categories of motives in the next section.

\section{Recollements for triangulated categories of motives}\label{voev}

Throughout this section $k$ is a field and $\mC V=\Ab$. So every $\mC V$-category $\mC C$
is nothing but a preadditive category. We always work here with preadditive categories whose objects are
the $k$-smooth separated schemes of finite type $Sm/k$.  
Such preadditive categories are also called categories of correspondences
in motivic homotopy theory. We fix a category of correspondences 
$\mC C$ in the sense of~\cite{Gar17}. Briefly, $\mC C$ must have a compatible action of $Sm/k$ and
satisfy certain properties with respect to Nisnevich topology. It is a generalisation of the category of
finite correspondences $Cor$ in the sense of Suslin--Voevodsky~\cite{SV1}.

The category of enriched functors $[\mC C^{\op},\mC V]$ is the category of additive contravariant functors from
$\mC C$ to Abelian groups $\Ab$. It is also called the \emph{category of presheaves with $\mC C$-correspondences}.
We shall denote it by $Pre(\mC C)$.  It is a Grothendieck category such that
$\bigl\{\mC C(-,X)\bigr\}_{X\in Sm/k}$ is a family of finitely generated projective generators of
$Pre(\mC C)$. Denote by $\D\bigl(Pre(\mC C)\bigr)$ its derived category of unbounded complexes. We shall
also write $\mathbb Z_{\mC C}(X)$ to denote $\mC C(-,X)$, where $X\in Sm/k$.
The derived category of Abelian groups $\D(\Ab)$ has compact generators given by the shifted
complexes $\mathbb Z[n]$ of $\mathbb Z$, which are \K-projective in $\D(\Ab)$ as well. By Theorem~\ref{cond}(1)
$\D\bigl(Pre(\mC C)\bigr)$ has compact generators given by the shifted
complexes $\mathbb Z_{\mC C}(X)[n]=\mathbb Z_{\mC C}(X)\otimes\mathbb Z[n]$, $X\in Sm/k$,
which are \K-projective in $\D\bigl(Pre(\mC C)\bigr)$ also.

\begin{definition}
Let $\mathbb A^1$ be the affine line and $X\in Sm/k$. The projection morphism $pr_X:X\times\mathbb A^1\to X$
is left inverse to the inclusion $i_0:X\to X\times\mathbb A^1$ identifying $X$ with $X\times 0$ in $X\times\mathbb A^1$.
It induces a split epimorphism of finitely generated projective objects in $Pre(\mC C)$
   $$pr_{X,*}:\mathbb Z_{\mC C}(X\times\mathbb A^1)\to\mathbb Z_{\mC C}(X).$$
We define $\mathbb Z_{\mC C}^{\mathbb A^1}(X):=\Ker(pr_{X,*})$. By definition, 
$\mathbb Z_{\mC C}^{\mathbb A^1}(X)$ is finitely generated projective in $Pre(\mC C)$.

We also define the following full subcategory of $Pre(\mC C)$:
   $$\mC S_{\mathbb A^1}:=\Bigl\{F\in Pre(\mC C)\mid\Hom_{Pre(\mC C)}\bigl(\mathbb Z_{\mC C}^{\mathbb A^1}(X),F\bigr)=0
       \textrm{ for all $X\in Sm/k$}\Bigr\},$$
which is localizing because the $\mathbb Z_{\mC C}^{\mathbb A^1}(X)$-s are projective objects in $Pre(\mC C)$.
The full subcategory of $Pre(\mC C)$ whose objects are the $\mathbb Z_{\mC C}^{\mathbb A^1}(X)$, $X\in Sm/k$,
will be denoted by $\mC C^{\mathbb A^1}$. It will play the same role as the $\mC V$-full subcategory $\mC A$ from
previous sections. The category of contravariant additive functors from $\mC C^{\mathbb A^1}$ to $\Ab$
will be denoted by $Pre(\mC C^{\mathbb A^1})$. It will play the same role as $[\mC A,\mC V]$ from previous sections.
\end{definition}

Recall that a presheaf $F$ of Abelian groups is \emph{homotopy invariant\/} if $pr_X^*:F(X)\to F(X\times\mathbb A^1)$ is an
isomorphism for all $X\in Sm/k$.

\begin{lemma}\label{a1inv}
A presheaf with $\mC C$-correspondences belongs to $\mC S_{\mathbb A^1}$ if and only if it is homotopy invariant.
\end{lemma}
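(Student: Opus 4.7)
The plan is to deduce the equivalence directly from the defining split short exact sequence of $\mathbb Z_{\mathcal C}^{\mathbb A^1}(X)$ together with the Yoneda lemma. Since $i_0: X \to X\times\mathbb A^1$ is a section of $pr_X$, the map $pr_{X,*}:\mathbb Z_{\mathcal C}(X\times\mathbb A^1)\to\mathbb Z_{\mathcal C}(X)$ is split in $Pre(\mathcal C)$, and therefore
\[
0\to \mathbb Z_{\mathcal C}^{\mathbb A^1}(X)\to \mathbb Z_{\mathcal C}(X\times\mathbb A^1)\xrightarrow{pr_{X,*}}\mathbb Z_{\mathcal C}(X)\to 0
\]
is a split short exact sequence in $Pre(\mathcal C)$.

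Next I would apply $\Hom_{Pre(\mathcal C)}(-,F)$, which stays short exact (actually splits) because the sequence above splits. Invoking the enriched Yoneda lemma (Lemma~\ref{enryon}) to identify $\Hom_{Pre(\mathcal C)}(\mathbb Z_{\mathcal C}(Y),F)=F(Y)$, I obtain a split short exact sequence of Abelian groups
\[
0\to F(X)\xrightarrow{pr_X^*} F(X\times\mathbb A^1)\to \Hom_{Pre(\mathcal C)}\bigl(\mathbb Z_{\mathcal C}^{\mathbb A^1}(X),F\bigr)\to 0,
\]
in which $pr_X^*$ is split injective with cokernel equal to $\Hom_{Pre(\mathcal C)}\bigl(\mathbb Z_{\mathcal C}^{\mathbb A^1}(X),F\bigr)$.

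From this the conclusion is immediate: the Hom-group vanishes for a given $X$ if and only if the split injection $pr_X^*$ is also surjective, i.e.\ an isomorphism. Quantifying over all $X\in Sm/k$, membership in $\mathcal S_{\mathbb A^1}$ is equivalent to $pr_X^*$ being an isomorphism for every $X\in Sm/k$, which is precisely homotopy invariance of $F$. There is no real obstacle here; the only point to keep in mind is that splitness of the defining sequence is what makes the $\Hom(-,F)$ argument work without any derived-functor contribution, so I would make sure to invoke the section $i_{0,*}$ at the outset.
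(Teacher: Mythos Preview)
Your argument is correct and is precisely the unpacking of what the paper leaves implicit: the paper's own proof is the single sentence ``This immediately follows from the definition of $\mathcal S_{\mathbb A^1}$,'' and your split short exact sequence plus Yoneda computation is exactly the content behind that sentence. There is no difference in approach, only in level of detail.
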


\begin{proof}
This immediately follows from the definition of $\mC S_{\mathbb A^1}$.
\end{proof}

\begin{remark}\label{BD}
Given a preadditive category $\mC B$, the category $(\mC B^{\op},\Ab)$ is naturally 
equivalent to $(\mC D^{\op},\Ab)$, where $\mC D$ is a full subcategory of any family of finitely
generated projectives of $(\mC B^{\op},\Ab)$ containing the representable functors $\mC B(-,b)$.
In other words, we can always add as many finitely generated projectives as is necessary 
\big(see, e.g.,~\cite[Section~4]{GG}\big). Recall that the Yoneda Lemma identifies $\mC B$ with the full
subcategory of representable functors of $(\mC B^{\op},\Ab)$. For example, we can add 
$\mathbb Z_{\mC C}^{\mathbb A^1}(X)$, $X\in Sm/k$, to $\mC C$ and form a bigger
preadditive category $\widetilde{\mC C}$ without changing $Pre(\mC C)$ in the above sense
that there is a natural equivalence $Pre(\widetilde{\mC C})\xrightarrow{\sim} Pre(\mC C)$. This
equivalence is induced by the restriction from $\widetilde{\mC C}$ to $\mC C$. We 
shall tacitly use this remark below when applying recollements from preceding sections to
the corresponding categories associated with $Pre(\mC C)$.
\end{remark}

It follows from Theorem~\ref{Egg2} that there is a recollement of Abelian categories
\begin{diagram*}[column sep=huge]
\mC S_{\mathbb A^1}\arrow[r,"i"] &{Pre(\mC C)}
   \bendR{i_R}
   \bendL{i_L}
   \arrow[r,"\ell"]			
& {Pre(\mC C)/\mC S_{\mathbb A^1}}\bendR{\ell_R} 
   \bendL{\ell_L} 
\end{diagram*}
with functors $i_L,i,i_R$ being from Theorem~\ref{Egg}. The functor $\ell_R$ is the inclusion and
$\ell_L:=r_L\circ\varkappa^{-1}$, where $\varkappa:Pre\bigl(\mC C^{\mathbb A^1}\bigr)\to Pre(\mC C)/\mC S_{\mathbb A^1}$
is a natural equivalence of Grothendieck categories~\cite[4.10]{GG} and $r_L:Pre\bigl(\mC C^{\mathbb A^1}\bigr)\to Pre(\mC C)$
is the left Kan extension associated with the fully faithful embedding $\mC C^{\mathbb A^1}\to\widetilde{\mC C}$ composed
with the natural equivalence $Pre(\widetilde{\mC C})\xrightarrow{\sim} Pre(\mC C)$ of Remark~\ref{BD}.

In this particular recollement we are able to give an explicit description of the functor $i_L$.
For this, recall that the \emph{strict homotopization\/} of a presheaf of Abelian groups $F$, denoted by
$[F]$, is defined as $\Coker\bigl(i_1^*-i_0^*:F(-\times\mathbb A^1)\to F\bigr)$, where $F(-\times\mathbb A^1)$
is the presheaf $X\in Sm/k\mapsto F(X\times\mathbb A^1)$ and $i_1:X\to X\times\mathbb A^1$ identifies
$X$ with $X\times 1\subset X\times\mathbb A^1$. Notice that $[F]$ is the zeroth homology presheaf
of the Suslin complex $C_*(F)$ of $F$ \big(see~\cite[Section~1]{SV1} for the definition of $C_*(F)$\big). In particular, $[F]$
is homotopy invariant, because all homology presheaves of $C_*(F)$ are homotopy invariant 
\big(see~\cite[Section~1]{SV1} or~\cite[Section~2.3]{MV} for details\big). By Lemma~\ref{a1inv} $[F]\in\mC S_{\mathbb A^1}$.

\begin{proposition}\label{iL}
$i_L:Pre(\mC C)\to\mC S_{\mathbb A^1}$ is computed as the strict homotopization functor $F\mapsto[F]$.
\end{proposition}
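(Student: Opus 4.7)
The plan is to identify $F\mapsto[F]$ with the left adjoint $i_L$ by verifying the defining universal property of $i_L:Pre(\mC C)\to \mC S_{\mathbb A^1}$, rather than by chasing through the explicit construction $i_L(X)=\Coker(\eps_X)$ from Theorem~\ref{Egg}. Concretely, the goal is to show that for every $G\in\mC S_{\mathbb A^1}$ the canonical projection $\eta_F:F\twoheadrightarrow[F]$ induces a bijection
$$\Hom_{\mC S_{\mathbb A^1}}\bigl([F],G\bigr)\xrightarrow{\cong}\Hom_{Pre(\mC C)}\bigl(F,i(G)\bigr);$$
uniqueness of adjoints then yields $i_L(F)\cong[F]$ naturally in $F$.

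First I would check that $[F]$ actually lies in $\mC S_{\mathbb A^1}$. By Lemma~\ref{a1inv} this is equivalent to showing $[F]$ is homotopy invariant, and this is exactly what was recorded in the paragraph preceding the proposition: $[F]$ is the zeroth homology presheaf $H_0\bigl(C_*(F)\bigr)$ of the Suslin complex, and all homology presheaves of $C_*(F)$ are homotopy invariant by the standard argument of~\cite[Section~1]{SV1} and~\cite[Section~2.3]{MV}. Next I would verify the universal property. Given any $G\in\mC S_{\mathbb A^1}$ and any $\phi:F\to G$ in $Pre(\mC C)$, the key observation is that homotopy invariance of $G$ forces $i_0^*=i_1^*$ on $G$: the map $pr_X^*:G(X)\to G(X\times\mathbb A^1)$ is an isomorphism, and the identities $pr_X\circ i_j=\id_X$ give $i_j^*\circ pr_X^*=\id$ for $j=0,1$, so both $i_0^*$ and $i_1^*$ coincide with $(pr_X^*)^{-1}$. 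Naturality of $\phi$ then yields
$$\phi\circ(i_1^*-i_0^*)=(i_1^*-i_0^*)\circ\phi=0,$$
so $\phi$ vanishes on the image of $i_1^*-i_0^*:F(-\times\mathbb A^1)\to F$ and therefore factors through the cokernel $[F]$; uniqueness of the factorisation is immediate from the fact that $\eta_F$ is an epimorphism.

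The one technical point I would treat with care is checking that $i_1^*-i_0^*:F(-\times\mathbb A^1)\to F$ is genuinely a morphism in $Pre(\mC C)$, so that $[F]$ inherits the structure of a presheaf with $\mC C$-correspondences rather than being merely a presheaf of Abelian groups. This uses exactly the compatible action of $Sm/k$ on the category of correspondences $\mC C$ built into the definition of~\cite{Gar17}: the morphisms $i_0,i_1,pr_X$ of $Sm/k$ induce morphisms in $\mC C$, and hence the pullbacks they induce on any $F\in Pre(\mC C)$ are morphisms of $Pre(\mC C)$, so the cokernel $[F]$ is formed inside $Pre(\mC C)$. Apart from this bookkeeping, which is really a compatibility assertion about the action of $Sm/k$ on $\mC C$ rather than a genuine obstacle, the argument is essentially the two-line universal-property verification above.
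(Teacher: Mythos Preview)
Your proof is correct and follows essentially the same route as the paper's: both verify directly that strict homotopization is left adjoint to the inclusion $i:\mC S_{\mathbb A^1}\to Pre(\mC C)$ by showing that any morphism $F\to G$ with $G$ homotopy invariant kills $\Im(i_1^*-i_0^*)$ and hence factors uniquely through $[F]$. Your write-up is slightly more explicit than the paper's (spelling out why $i_0^*=i_1^*$ on $G$ and flagging that $i_1^*-i_0^*$ is a morphism in $Pre(\mC C)$ via the $Sm/k$-action on $\mC C$), but the argument is the same.
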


\begin{proof}
Our statement will follow if we show that the strict homotopization functor is left adjoint to the inclusion
$i:\mC S_{\mathbb A^1}\to Pre(\mC C)$. Let $F\in Pre(\mC C)$, $H\in\mC S_{\mathbb A^1}$ and let
$f:F\to H$ be a morphism in $Pre(\mC C)$. Consider a commutative diagram
\begin{diagram*}
F(-\times\mathbb A^1)\dar\rar{i_1^*-i_0^*}&F\dar{f}\rar{j}&{[F]}\dar{[f]}\\
                      H(-\times\mathbb A^1)\rar{i_1^*-i_0^*}&H\rar&{[H]}
\end{diagram*}   
By Lemma~\ref{a1inv} $H$ is homotopy invariant, hence the left bottom arrow equals zero and 
$H=[H]$. We see that $f=[f]\circ j$, and so the strict homotopization functor is left adjoint to $i$.
\end{proof}

Next denote by $\mathbf{PDM}^{eff}_{\mC C}(k)$ the full triangulated subcategory of $\D\bigl(Pre(\mC C)\bigr)$
consisting of the complexes with homotopy invariant homology presheaves.
Theorem~\ref{lambda} and Lemma~\ref{a1inv} imply that there exists a recollement of triangulated categories 
\begin{diagram*}[column sep=huge]
\mathbf{PDM}^{eff}_{\mC C}(k)\arrow[r,"\iota"] &{\D(Pre\bigl(\mC C)\bigr)}
   \bendR{\iota_R} 
     \arrow[r,"\lambda"]
	 \bendL{\iota_L}		
& {\D\bigl(Pre(\mC C)/\mC S_{\mathbb A^1}\bigr)}
   \bendR{\lambda_R} 
   \bendL{\lambda_L}
\end{diagram*}
The functor $\iota$ is inclusion, $\lambda$ applies the $\mC S_{\mathbb A^1}$-localization exact 
functor to any complex of $\D\bigl(Pre(\mC C)\bigr)$.
The functor $\lambda_L:=\rho_L\circ\varkappa^{-1}$, where $\rho_L$ is from Theorem~\ref{TA}
and $\varkappa:Pre\bigl(\mC C^{\mathbb A^1}\bigr)\to Pre(\mC C)/\mC S_{\mathbb A^1}$
is a natural equivalence of Grothendieck categories~\cite[4.10]{GG}. 
The functor $\lambda_R$ takes $Y$ to $\ell_R(kY)$, where 
$kY$ is a \K-injective resolution of $Y$ in $\D\bigl(Pre(\mC C)/\mC S_{\mathbb A^1}\bigr)$ and $\ell_R$
is the inclusion functor $Pre(\mC C)/\mC S_{\mathbb A^1}\to Pre(\mC C)$ which applies to $kY$ degreewise.

\begin{remark}
In fact, there is another description of $\iota_L$ and $\lambda_L$ for this particular 
recollement. Similar to~\cite[Section~2.3]{MV} one can show that $\iota_L$ is given by the Suslin complex
functor $C_*$. Here $C_*$ is a kind of homotopy extension for Proposition~\ref{iL} about the strict homotopization
functor. Since $\varkappa:Pre\bigl(\mC C^{\mathbb A^1}\bigr)\to Pre(\mC C)/\mC S_{\mathbb A^1}$ is an
equivalence of Grothendieck categories~\cite[4.10]{GG} and $\D\bigl(Pre(\mC C^{\mathbb A^1})\!\bigr)$ has enough
\K-projectives (in the sense that every complex admits a \K-projective resolution), then 
$\lambda_L$ takes a complex $X$ to the \K-projective resolution $\widetilde X$ of $\varkappa^{-1}(X)$ in 
$\D\bigl(Pre(\mC C^{\mathbb A^1})\bigr)$ and applies $\ell_L$ to $\widetilde X$ degreewise.
We use here the fact that $\ell_L$ is extended to a left Quillen functor
$\Ch\bigl(Pre(\mC C^{\mathbb A^1})\bigr)\to\Ch\bigl(Pre(\mC C)\bigr)$ with respect to the standard projective model
structure on both categories.
\end{remark}

In order to apply Theorem~\ref{bigthm} to the derived category $\D\bigl(Shv(\mC C)\bigr)$ of the Grothendieck 
category of Nisnevich sheaves 
with $\mC C$-correspondences, we now suppose that $\mC C$ is a strict 
$V$-category of correspondences in the sense of~\cite{Gar17}.
For example, the category of finite correspondences $Cor$ in the sense of Suslin--Voevodsky~\cite{SV1},
$K_0$ and $K^\oplus_0$ in the sense of Grayson--Walker~\cite{Gr,Wlk} are strict $V$-categories of correspondences
whenever the base field $k$ is perfect. The category of Milnor--Witt correspondences $\widetilde{Cor}$ 
in the sense of Calm\`es--Fasel~\cite{CF} is a strict $V$-category of
correspondences if $k$ is infinite perfect with $char(k)\not=2$. We also refer the reader to~\cite{DK} for further examples.
If $\mC C$ is a strict $V$-category of
correspondences then so is $\mC C_R:=\mC C\otimes R$ with $R$ a ring of fractions of $\mathbb Z$ like,
for example, $\mathbb Z[\frac1p]$ or $\mathbb Q$. 

Let $(\cdot)_{\mC Q}:Pre(\mC C)\to Shv(\mC C)$ be the Nisnevich sheafification functor from pre\-sheaves 
to Nisnevich sheaves with $\mC C$-correspondences and let $\mC Q=\Ker\bigl((\cdot)_{\mC Q}\bigr)$ be the 
localizing subcategory in $Pre(\mC C)$ of Nisnevich locally trivial presheaves. Similar to Lemma~\ref{nissh} we have
$Shv(\mC C)=Pre(\mC C)/\mC Q$. Since $\mC C$ is a strict 
$V$-category of correspondences, then $(\cdot)_{\mC Q}$ takes homotopy invariant presheaves to
homotopy invariant sheaves. Using Lemma~\ref{a1inv} $\mC S_{\mathbb A^1}$ satisfies
the strict Voevodsky property with respect to $\mC Q$ in the sense of Definition~\ref{vsprop}.
By Lemma~\ref{SAQ} the full subcategory of homotopy invariant $\mC C$-sheaves $\mC S_{\mathbb A^1}^{\mC Q}$
is localizing in $Shv(\mC C)$. By Lemma~\ref{L5} one has $Shv(\mC C)/\mC S_{\mathbb A^1}^{\mC Q}=
Pre(\mC C)/\mC J_{\mathbb A^1}$, where $\mC J_{\mathbb A^1}$ is the join of $\mC S_{\mathbb A^1}$ and $\mC Q$.
By Proposition~\ref{join} $Shv(\mC C)/\mC S_{\mathbb A^1}^{\mC Q}$ consists of the sheaves which are
$\mC S_{\mathbb A^1}$-local in $Pre(\mC C)$.

By~\cite[Section~6]{GP} $\D\bigl(Shv(\mC C)\bigr)$ is a compactly generated triangulated category with 
compact generators given by representable $\mC C$-sheaves $\bigl\{\mC C(-,X)_{nis}\bigr\}_{X\in Sm/k}$.
Since $\mC C$-presheaves $\bigl\{\mC C(-,X)\bigr\}_{X\in Sm/k}$ regarded as complexes in single degrees 
are compact generators of $\D\bigl(Pre(\mC C)\bigr)$, it follows that
the sheafification functor $(\cdot)_{\mC Q}:\D\bigl(Pre(\mC C)\bigr)\to\D\bigl(Shv(\mC C)\bigr)$ respects compact objects.

Following Voevodsky~\cite{Voe2}, denote by $\mathbf{DM}^{eff}_{\mC C}(k)$ the full triangulated 
subcategory of $\D\bigl(Shv(\mC C)\bigr)$ consisting of homotopy invariant cohomology sheaves. We call it
the \emph{triangulated category of $\mC C$-motives}. In the notation of
Definition~\ref{vsprop}, $\mathbf{DM}^{eff}_{\mC C}(k)=\mC E^{\mC Q}$.

If we document the above arguments and use Theorem~\ref{bigthm}, we have shown the following

\begin{theorem}\label{shvthm}
Suppose $\mC C$ is a strict  $V$-category of correspondences in the sense of~\cite{Gar17}.
There exists a recollement of triangulated categories 
\begin{diagram*}[column sep=huge]
\mathbf{DM}^{eff}_{\mC C}(k)\arrow[r,"\iota^\mC Q"] &{\D\bigl(Shv(\mC C)\bigr)}
   \bendR{\iota_R^\mC Q} 
   \bendL{C_*}
   \arrow[r,"\lambda^\mC Q"]			
& {\D\bigl(Shv(\mC C)/\mC S_{\mathbb A^1}^{\mC Q}\bigr),}
   \bendR{\lambda_R^\mC Q} 
   \bendL{\lambda_L^\mC Q}
\end{diagram*}
where $C_*$ is the Suslin complex functor,
the functor $\lambda^{\mC Q}$ is induced by the $\mC S_{\mathbb A^1}^{\mC Q}$-local\-ization functor 
$Shv(\mC C)\to Shv(\mC C)/\mC S_{\mathbb A^1}^{\mC Q}$ associated with the localizing subcategory
$\mC S_{\mathbb A^1}^{\mC Q}$ of $Shv(\mC C)$, $\iota^{\mC Q}$ is inclusion,
and $\lambda_R^\mC Q$ is induced by the \K-injective resolution functor.
\end{theorem}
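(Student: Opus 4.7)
The plan is to deduce Theorem~\ref{shvthm} directly from Theorem~\ref{bigthm} by specializing to $\mC V=\Ab$ and the preadditive category $\mC C^{\op}$ with $\mC A=\mC C^{\mathbb A^1}$ (using the Yoneda-type enlargement of Remark~\ref{BD}), then identifying each ingredient with its motivic counterpart. First I would check that Theorem~\ref{cond} applies: condition~(1) is satisfied because $\D(\Ab)$ has $\K$-projective compact generators $\{\mathbb Z[n]\}_{n\in\bB Z}$, so $\D\bigl(Pre(\mC C)\bigr)$ is compactly generated by the shifted representable complexes $\mathbb Z_{\mC C}(X)[n]$.

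Next, I would verify the two hypotheses of Theorem~\ref{bigthm} with $\mC Q=\Ker\bigl((\cdot)_{\mC Q}\bigr)$ the kernel of Nisnevich sheafification. For the first hypothesis, $\D\bigl(Shv(\mC C)\bigr)$ is compactly generated by $\bigl\{\mC C(-,X)_{nis}[n]\bigr\}$ by~\cite[Section~6]{GP}, and since the representable presheaves sheafify to the representable sheaves, the induced functor $\D\bigl(Pre(\mC C)\bigr)\to\D\bigl(Shv(\mC C)\bigr)$ sends the chosen compact generators to compact generators, hence respects compact objects. The main step is the second hypothesis: $\mC S_{\mathbb A^1}$ satisfies the strict Voevodsky property with respect to $\mC Q$. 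By Lemma~\ref{a1inv} $\mC S_{\mathbb A^1}$ is precisely the subcategory of homotopy invariant presheaves with $\mC C$-correspondences, and the assumption that $\mC C$ is a strict $V$-category of correspondences means exactly that the Nisnevich sheafification of a homotopy invariant presheaf with $\mC C$-correspondences is a homotopy invariant sheaf and that its Nisnevich cohomology sheaves are also homotopy invariant. The first fact gives the Voevodsky property $\mC S_{\mathbb A^1}^{\mC Q}\subset\mC S_{\mathbb A^1}$, and the second fact, applied to the cohomology sheaves of a $\K$-injective resolution $kY$ in $\D\bigl(Shv(\mC C)\bigr)$ of a complex $Y\in\mC E^{\mC Q}$, guarantees that these cohomology sheaves lie in $\mC S_{\mathbb A^1}$, giving the strict $V$-property of Definition~\ref{vsprop}.

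With both hypotheses verified, Theorem~\ref{bigthm} yields a recollement with middle category $\D\bigl(Pre(\mC C)/\mC Q\bigr)=\D\bigl(Shv(\mC C)\bigr)$ (by the analogue of Lemma~\ref{nissh}), left category $\mC E^{\mC Q}_{\mathbb A^1}$, and right category $\D\bigl(Pre(\mC C)/\mC J_{\mathbb A^1}\bigr)$. To finish I would identify each piece. By Lemma~\ref{L5} one has $Pre(\mC C)/\mC J_{\mathbb A^1}=Shv(\mC C)/\mC S_{\mathbb A^1}^{\mC Q}$, giving the right-hand category. By the definition of $\mC E^{\mC Q}$ in Definition~\ref{vsprop}, $\mC E^{\mC Q}_{\mathbb A^1}$ is the full subcategory of complexes in $\D\bigl(Shv(\mC C)\bigr)$ whose cohomology sheaves are $\mC S_{\mathbb A^1}$-local, i.e.\ homotopy invariant, which is precisely Voevodsky's definition of $\mathbf{DM}^{eff}_{\mC C}(k)$. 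The functor $\lambda^{\mC Q}$ is the $\mC S_{\mathbb A^1}^{\mC Q}$-localization functor, $\iota^{\mC Q}$ is the inclusion, $\iota_R^{\mC Q}$ and $\lambda_R^{\mC Q}$ are produced from the $\K$-injective resolution functor as in Theorem~\ref{bigthm}.

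The remaining subtle point is identifying the left adjoint $\iota_L^{\mC Q}$ with the Suslin complex functor $C_*$. I would appeal directly to the Voevodsky theorem~\cite[3.2.6]{Voe2}, which asserts exactly that $C_*$ computes the left adjoint to the inclusion $\mathbf{DM}^{eff}_{\mC C}(k)\hookrightarrow\D\bigl(Shv(\mC C)\bigr)$; by uniqueness of adjoints this coincides with $\iota_L^{\mC Q}$ up to natural isomorphism. This is the part where the nontrivial input of Voevodsky's fundamental theorem is used; the rest of the proof is an exercise in translating the categorical setup back into the language of sheaves with transfers.
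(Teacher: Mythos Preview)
Your proposal is correct and follows essentially the same route as the paper: specialize Theorem~\ref{bigthm} to $\mC V=\Ab$ with $\mC A=\mC C^{\mathbb A^1}$ via Remark~\ref{BD}, verify Theorem~\ref{cond}(1) and the two hypotheses of Theorem~\ref{bigthm} (compact generation of $\D\bigl(Shv(\mC C)\bigr)$ via~\cite{GP}, strict $V$-property from the definition of a strict $V$-category of correspondences), identify the resulting categories, and invoke Voevodsky's theorem~\cite[3.2.6]{Voe2} for $\iota_L^{\mC Q}\cong C_*$. One terminological slip: where you write that the cohomology sheaves are ``$\mC S_{\mathbb A^1}$-local'' you mean that they \emph{belong to} $\mC S_{\mathbb A^1}^{\mC Q}$ (i.e.\ are homotopy invariant), not that they are local with respect to $\mC S_{\mathbb A^1}$-localization; your subsequent ``i.e.\ homotopy invariant'' shows you have the right notion in mind.
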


Thus the Voevodsky triangulated category of $\mC C$-motives $\mathbf{DM}^{eff}_{\mC C}(k)$ fits 
into a recollement of explicit triangulated categories. The fact that $C_*$ is left adjoint to the inclusion 
is a celebrated theorem of Voevodsky~\cite[3.2.6]{Voe2} (see~\cite[1.12]{SV1} as well).

\end{document}